\definecolor{MyDarkblue}{rgb}{0,0.08,0.50}
\definecolor{Brickred}{rgb}{0.65,0.08,0}
\newtheorem{theorem}{Theorem}[section]
\newtheorem{lemma}[theorem]{Lemma}
\newtheorem{proposition}[theorem]{Proposition}
\newtheorem{corollary}[theorem]{Corollary}
\newtheorem{definition}[theorem]{Definition}
\newtheorem{remark}[theorem]{Remark}
\newtheorem{claim}[theorem]{Claim}
\newtheorem{example}[theorem]{Example}
\newcommand{\Pv}{\mathbb{P}}
\newcommand{\Ev}{\mathbb{E}}
\newcommand{\e}{{\mathrm e}}
\numberwithin{equation}{section}
\newcommand{\R}{\mathbb{R}}
\newcommand{\N}{\mathbb{N}}
\renewcommand{\emptyset}{\varnothing}
\newcommand{\CD}{\mathcal {D}}
\newcommand{\CN}{\mathcal {N}}
\newcommand*{\la}{\lambda}
\newcommand*{\de}{\delta}
\newcommand*{\ve}{\varepsilon}
\newcommand*{\al}{\alpha}
\newcommand*{\be}{\begin{equation}}
\newcommand*{\ee}{\end{equation}}
\newcommand*{\ba}{\begin{aligned}}
\newcommand*{\ea}{\end{aligned}}
\newcommand*{\barr}{\begin{array}{c}}
\newcommand*{\earr}{\end{array}}
\def \toas     {\buildrel {a.s.}\over{\longrightarrow}}
\newcommand*{\wit}{\widetilde}
\newcommand*{\ind}{\mathbbm{1}}
\def\namedlabel#1#2{\begingroup
    #2%
    \def\@currentlabel{#2}%
    \phantomsection\label{#1}\endgroup
}
\begin{document}
	\title[Explosive CMJ branching processes]{Explosive Crump-Mode-Jagers branching processes}

	\date{\today}
	\subjclass[2010]{Primary: 60J80, Secondary: 05C80}
	\keywords{Branching processes, explosion, min-summability}
\author[J. Komj\'athy]{J\'ulia Komj\'athy}
	\address{Department of Mathematics and
	    Computer Science, Eindhoven University of Technology, P.O.\ Box 513,
	    5600 MB Eindhoven, The Netherlands.}
	\email{j.komjathy@tue.nl}

\begin{abstract}
In this paper we initiate the theory of Crump-Mode-Jagers branching processes (BP) in the setting where no Malthusian parameter exist, i.e., the process grows faster than exponential. A Crump-Mode-Jagers BP is a branching process (in continuous time) where arbitrary dependencies are allowed between the birth-times of the children of a single individual in the population. It is however assumed that these reproduction processes are i.i.d. point processes for different individuals. This paper focuses on determining whether this branching process explodes, that is, the process reaches infinitely many individuals in finite time. We develop comparison techniques between reproduction processes.
We study special cases in terms of explosivity such as age-dependent BPs, and epidemic models with contagious intervals. 
For this, we superimpose a random contagious interval $[I, C]$ on every individual in the BP and keep only the children with birth-times that fall in this interval of the parent. We show that the distribution of the end $C$ of the contagious interval does not matter in terms of explosion, while the distribution of $I$ does: the epidemic explodes if and only if the two age-dependent BPs with the original birth-times and birth-times $I$ explode. We finish studying some pathological examples such as birth-time distributions that are singular to the Lebesque-measure yet they produce an explosive BP with arbitrary power-law offspring distributions. 
\end{abstract}

\maketitle
\section{Introduction and model}
\subsection{Introduction} The study of branching processes has a long history in probability theory, see e.g.\ the classical books \cite{AN72, Har63} for a good introduction to the field.
 
In this paper we study branching processes with general reproduction functions, as described e.g.\ in \cite{jagers1974convergence, jagers1984growth} or \cite{nerman1981convergence}. This means that every individual in the population reproduces according to an i.i.d.\ point process, and no particular assumption is made about the dependencies between the times of arrival of points (the birth-times of consecutive children) of a single individual.
We study these branching processes from the point of view of explosion event: we investigate when is it possible that infinitely many individuals are born within a finite time.

It is well-known that if the expected number of children of an individual is finite, then the population grows exponentially time, and the growth rate is called the \emph{Malthusian parameter}. That is, at time $t$, the population size will be of order $\exp\{\la t \},$ with $\la$ being the Malthusian parameter. The concept of a Malthusian parameter is wider than finite mean offspring: for a precise definition, see \eqref{eq::mal-def} below.
Under the assumption that a Malthusian parameter exists, the population grows exponentially in time \cite{jagers1984growth}, thus explosion does not occur. The behavior of these branching processes is quite well understood,  due to the immense work in the `70s and `80s.
In particular, the age-distribution, remaining lifetime distribution, structure of generations and relationships and other general characteristics were studied, see e.g. the work of Jagers, Nerman, B\"uhler and Samuels, and others, e.g. \cite{berndtsson1979exponential, Bu71, Bu72, Bu74,  jagers1974convergence, jagers1984growth, nerman1981convergence, Sa71} for further references.

In this paper we focus on the case when the BP grows \emph{faster} than exponential. 
 For explosion to happen, an infinite mean offspring is required, but is not sufficient. 
Even when the number of offspring of a single individual is infinite, the process still might grow exponentially if the birth-times are quite spread out: A Malthusian parameter might still exist,  e.g. in the case when each individual reproduces infinitely many times following a homogeneous Poisson process. 

The study of infinite mean BP-s that do not have a Malthusian parameter is rather limited: as far as the author is aware of, the literature is restricted to two main directions:
Firstly, Galton-Watson branching processes with infinite mean offspring. This is the case when every individual produces all its children after a unit time. 
The limiting behaviour of these processes has gained reasonable attention in the `70s and `80s, see e.g. \cite{Dav78, BarSch77} for further literature. Here, results investigate the (double-exponential) growth of the process and study properties of the limiting variable when some slowly varying function of the number of individuals in the $n$th generation is taken. See \cite{BarSch77} for further references. 

The second direction that gained sufficient attention is the explosion of age-dependent branching processes, with infinite mean offspring. An age-dependent BP is a process where each individual lives an i.i.d. lifetime from some distribution and they produce all their children upon death.

The question of explosivity of age-dependent BPs were studied first in the Russian literature: Sevastanov gave sufficient criterions for explosion, see e.g. \cite{Sev70, Sev67, Vat87}. A similar paper in flavour is by Grey \cite{Grey74} to the west of the iron curtain. Later, the study of (minimal) displacement of branching random walks rediscovered the same topic and new results were found: 
the linear growth of minimal displacement were established for processes with finite mean offspring in \cite{BIg90, Ham74, Kin75}. Then, 
the minimal displacement of branching RWs with possible $0$ replacement were studied by \cite{bramson78, DeHo91} and finally \cite{AmiDev13} provided necessary and sufficient criterions for the explosivity of age-dependent BPs under mild conditions on the offspring distribution. 
 
Other generalisations include the topic branching Markov processes, see Savits \cite{Sav69}, and branching processes were a single individual can produce infinite offspring and thus `terminate' the process, see the recent work by Sagitov \emph{et al.} \cite{Sag15, SagLin15}.


From the more applied point of view, the study of \emph{complex networks} and \emph{random graph models} has boomed in the last few decades.
Most random graph models locally have a tree-like structure and as result can be well-approximated by branching processes: the Erd{\H o}s-R\'enyi random graph \cite{ER60}, the configuration model \cite{Bollobas01}, inhomogeneous random graphs \cite{BJR}, preferential attachment models \cite{Bara99} all fall into this category. 
The first step in the study of the topology of these graph models as well as of the behavior of dynamical processes on them (such as information diffusion, epidemic spreads, etc), is to understand how the graph locally looks like and how the process on it under investigation behaves locally. This local behavior is well-approximated by an appropriately chosen branching process.  It is thus crucial to understand the behavior of the approximating branching process. 

To give a list of examples where branching processes are used in the analysis of random graphs, we start with the classical example: the phase transition in the size of the giant component in the Erd{\H o}s-R\'enyi random graph corresponds to the sub/super/criticality of the corresponding Galton-Watson BP \cite{Bollobas01, H10}.  The same is true for the configuration model \cite{MolRee95, MolRee98} and for inhomogeneous random graphs \cite{BJR, HHZ07}. 

In weighted random graphs, typical (weighted) distances can be studied by using age-dependent branching processes where the birth-time distribution equals the edge-weight distribution. For typical distances, one needs to understand the growth rate of the branching process as well as the behavior of certain characteristics, such as the ratio of dead and alive particles, and the \emph{age distribution} of the currently alive population \cite{berndtsson1979exponential, jagers1974convergence}. Results on these are utilized e.g.\ in \cite{BHH10, BHH11, BHH14, HHM05, HHZ07}.
Further, the number of edges on the least weight path, the \emph{hopcount} between two vertices, depends on the asymptotic behavior of the generation of the $k$-th born particle in the corresponding BP, thus the work of B\"uhler \cite{Bu71, Bu72, Bu74} and Kharlamov \cite{Khar1} is used in e.g. \cite{BHH10, BHH11, BHH14,  HHM05, KolKom12, KomVad15}. Typical distances in weighted random graphs also correspond to the transmission time for an information or for an epidemic to spread between two vertices: again, branching process results are extensively used to study the behavior of epidemics, 
as in \cite{BaRe13, bhahofkom}. 

A large number of real life networks are known to have power-law degree distributions with power-law exponent in the interval $(2,3)$. 
This corresponds to finite asymptotic mean but infinite asymptotic variance for the empirical degree distribution. 
Examples include the internet on the router level \cite{falo99}, the world wide web \cite{bara00}, cargo ship movements \cite{kalu10}, gene regulatory networks \cite{born02}, citation networks \cite{radi12}, and many more, see more references in \cite{bara00, newman03}. Due to the size-biasing effect (a neighbor vertex in the graph is more likely to have more neighbors), a power-law exponent between $(2,3)$ for the degrees in the graph corresponds to an approximating branching process with power-law offspring distribution with exponent in $(1,2)$: that is, the approximating BP has infinite mean offspring. 
For the study of the random graph models of the above mentioned networks, it is thus crucial to understand the behavior of these branching processes. 
For example, typical distances and the behavior of spreading processes with deterministic transmission times in the configuration model can be determined in this regime using the results of Davies \cite{Dav78}: see the articles \cite{BarHofKom15, HHZ07, HofKom15} or the book \cite{H10}.

If we would like to understand the topology of weighted networks with infinite asymptotic variance degrees, we are in the regime of this paper: results on age-dependent or CMJ branching processes with infinite mean offspring are needed. Since the literature is limited to the explosive case \cite{Grey74, Sev70, Sev67}, or Galton-Watson BPs \cite{Dav78}, most results in this setting are also limited to either explosive propagation or deterministic edge-weights \cite{BarHofKom16}.

For preferential attachment graphs, CMJ branching processes is probably the most natural way to look at the local neighborhood of a vertex, see e.g. the work of Bhamidi \cite{Bha07}, T\'oth and Rudas \emph{et al} \cite{Rudas08, Rudas07}, Dereich and M\"orters  \cite{Der16, DerMor16}.

We can conclude one important fact from all these examples: limitations on the knowledge on branching processes result in limitations on our understanding of spreading processes on random graphs. For example, little to almost nothing is known about nondeterministic but non-explosive information diffusion on random graphs with infinite variance degrees \cite{BarHofKom16}, and literally nothing is known when we would like to assume dependencies between the transmission times from a vertex to its neighbors. This might easily occur when for instance we investigate an epidemic where each individual has an incubation time: to determine whether an epidemic is explosive is of extreme importance, think of for instance the slow spreading but extremely deadly recent case of Ebola \cite{ebola} or the latest news about the Zika virus \cite{zika}.

This paper aims to establish the foundations of the research of this missing area, and is thus part of a long-term project: the recent interest in spreading processes on random graph models make it necessary to extend the current theory on Crump-Mode-Jagers branching processes to the infinite mean offspring case. 

\subsection{Summary of results} This paper is divided into sections where each section is devoted to one topic and results are presented within the section. To give an overview,  we state here informally our results without specifying the detailed conditions on the models. 

First, in Section \ref{s:model} we introduce the model and study BPs with general reproduction function: 
Let us assume that the children of the initial ancestor are born at times $\sigma_i, i\in \N$ (if there are finitely many children, we set the birth-times infinite after the last child is born). We show that explosion can only happen via having infinite rays (line of descendants) with finite total length, see Lemma \ref{lemma::rays} below. 

In Section \ref{s::op},   we study the operator $T_\xi$ acting on non-increasing functions with values in $[0,1]$, corresponding to the  the distributional identity
\[ V\  {\buildrel d \over = }\ \min_{i\in \N} V_i+ \sigma_i, \]
that is ensured by the basic  branching structure of the process.
We show that the process explodes if and only if there is a non-constant fixed point function of this operator, 
and study the properties of the distribution function of the explosion time. In particular we show that 

1) the distribution function of the explosion time is the smallest fixed point function of the operator (Lemma \ref{lem::fixpoint}) and it is non-decreasing (Claim \ref{cl::noninc}),

2) explosion can happen arbitrarily fast (Claim \ref{cl::fastexp}),

3)  the BP a.s. explodes on survival (no conservative survival is possible) (Claim \ref{cl::nocons}).

4) If two reproduction functions have corresponding operators with one operator dominating the other, then the explosion of the process with dominating operator implies the explosion of the other process (Claim \ref{cl::opdom}).

In Section \ref{s::3} we study BPs with general reproduction processes. In particular, we investigate what happens if $\xi$ has a positive expected mass at $0$, or has finite expected mass in some interval around $0$, see Theorem \ref{thm::mass-zero}. In particular, Theorem \ref{thm::mass-zero} implies that a process can never be explosive if the reproduction process has finite expected mass on compact intervals. 
In this section, we develop a method that we call \emph{stochastic domination around the origin} and \emph{coupling around the origin}, respectively, see Definitions \ref{def::0-dom} and \ref{def::point-proc-dom}. We prove  a general comparison theorem stating that if for some $t_0>0$, two point processes can be coupled in such a way that one of them has at least as many points on every interval $[0,t]$, for all $t<t_0$ than the other process has, then the explosion of the latter process implies explosion of the first process, see Theorem \ref{thm::comparison-general}.  

In Section \ref{s::classic}, we introduce classical examples such as 

1) age-dependent branching processes, where each individual has a random number of children with i.i.d.\ birth-times from distribution $\sigma$.

2) epidemic models with incubation times and/or contagious periods, where each individual has a random number of children with i.i.d.\ birth times from distribution $\sigma$, but only those are actually born who fall within a random interval $[I, C]$. $I$ is called the \emph{incubation time} while $C$ is called the end of the \emph{contagious period}.

3) Backward version of epidemic models, where each individual has a random number of children with i.i.d.\ birth times from distribution $\sigma$, but child $i$ is only born if it falls within a random interval $[I_i, C_i]$, where the intervals $[I_i, C_i]$ are independent copies of $[I, C]$.

Then, in Section \ref{s::comp-class} we develop very natural comparison theorems saying that the following help the explosion to occur:

1) shorter birth-times $\sigma$ (without incubation times: Theorem \ref{thm::comp1}, with incubation times:

\quad Theorem \ref{thm::comp4}),

2) longer contagious period $C$ (Theorem \ref{thm::comp1}),

3) shorter incubation times $I$ (Theorem \ref{thm::comp2}),

4) more offspring (Theorem \ref{thm::comp-tail}).

Further, we show that the explosion of the backward process always implies the explosion of the forward process, see Theorem \ref{thm::forw-back}. 

In Section \ref{s::effect1}, we start to investigate the effect of contagious periods and incubation times in more detail.
We show that the distribution of $C$, the end of the contagious period, \emph{does not matter} in terms of explosion: it is impossible to stop the explosion by superimposing a contagious period $C$ on an explosive process (with or without incubation times), see Theorem \ref{thm::cont-nomatter2} and Corollary \ref{thm::cont-nomatter}. 

On the other hand, we show that incubation times \emph{do matter} in terms of explosion: any explosive age-dependent BP becomes conservative when we superimpose an incubation time $I$ on it that would constitute a conservative process if it were the birth-time distribution, see Theorem \ref{thm::incu-matter}. Thus, explosivity of the two age-dependent BPs with birth-times $\sigma$ and $I$, respectively, is necessary for the epidemic model with incubation times $I$ to be explosive. 

In Section \ref{s:minsum}, we introduce the notion of minimum-summability,  (\emph{min-summability} in short): a distribution $\sigma$ is min-summable for an infinite sequence $(a_k)_{k\in \N}$, if, taking the minimum of $a_k$ i.i.d.\ copies of $\sigma$ is summable in $k$ almost surely. This notion was introduced by 
Amini \emph{et al.}\ in \cite{AmiDev13}, where they showed that min-summability of distribution $\sigma$ with respect to the generation sizes of a Galton-Watson BP ($a_k=Z_k$) implies the explosivity of the age-dependent BP with birth times $\sigma$ and offspring as in the Galton-Watson BP, see Theorems \ref{thm::amidev1} and \ref{thm::amidev2}, as long as the offspring distribution has sufficiently heavy tails, that they call \emph{plump} (Definition \ref{def::plump1}).

We introduce the notion of \emph{plump power-law} distributions, see Definition \ref{def::plump2}. Roughly speaking, a distribution is a plump power-law if the tail behaves as a power-law, where the exponent can vary infinitely many times between different values, but it can not be much heavier than that, at least eventually. We give a rather transparent integral condition that the distribution of the birth-times $\sigma$ should satisfy that is necessary and sufficient for the explosivity of \emph{all} age-dependent BPs with plump power-law offspring, see Lemma \ref{lemma::integral}. This in particular implies that if a birth-time distribution is explosive for one offspring distribution with power-law exponent $\al \in (0,1)$, then it is explosive for \emph{all} offspring distribution that are plump power-laws, see Corollary \ref{cor::power-law-change}.

In Theorem \ref{thm::max-birthtime} we show that explosivity is closed under the following operations on the birth-time distribution:

1) multiplication by a positive constant

2) taking the maximum or minimum of two or more independent variables

3) taking the sum of two or more independent variables

4) binomial thinning: each individual is kept only with some fixed probability $p$.

In Section \ref{s::newproof}, we provide a new proof of the harder direction of Theorems \ref{thm::amidev1} by \cite{AmiDev13}: we show that for plump power-law offspring distributions, min-summability implies explosion. We do this by a generation-dependent thinning approach of the BP. In each generation, we throw away all the edges that have too long birth-times so that each remaining infinite ray has summable total length and show that the thinned process is supercritical. This proof is important since it reveals the robustness of  explosivity in the choice of the power-law exponent $\alpha$. Further, it can be adapted to analyse the explosivity of epidemic models with incubation periods, where independence is lost between the birth-times of the children of the same individual. 

In Section \ref{s::incu}, we investigate epidemic models with incubation times. In Theorem \ref{thm::incu-matter}, we have shown that an explosive process can be stopped by superimposing an incubation time on it that would form a conservative process if it would be used as birth-time distribution. This raises the natural question: can an \emph{explosive} incubation time distribution stop an age-dependent BP to be explosive? In other words: Assume that two branching processes with the same offspring distribution $X$, birth-time distributions $\sigma$ and $I$, respectively, are explosive. Is then the epidemic model with offspring distribution $X$, birth-time distribution $\sigma$ and incubation time distribution $I$ always explosive?

The answer turns out to be \emph{yes} but the current proof is far from trivial and does not cover the full generality. We prove first in Theorem \ref{thm::incu-main} that if $X$ is a plump distribution that forms an explosive BP with both possible birth-time distributions $\sigma$ and $I$, then the combination when one of them is used as incubation time produces an \emph{explosive backward-process of epidemics}. The advantage of the backward process is that it maintains independence across the birth-times of children of an individual. However, since only those edges are kept that are longer than the incubation time on that edge, exactly the \emph{long edges} are likely to be kept and hence explosivity of the thinned process is not trivial. Somewhat surprisingly, the bad cases are those birth-times that have a very steep distribution function around the origin: these are likely to be small, and hence thinned by an incubation time. The proof goes by verifying the integrability condition developed earlier for the new (thinned) birth-time distribution.

Then, in Theorem \ref{thm::incu-main-forw}, we prove the same statement for the forward process: namely that two explosive age-dependent BPs, when combined as one of the birth-times serving as incubation time, always forms an explosive \emph{forward process} of an epidemic. The proof is similar to the new proof of the min-summability Theorem \ref{thm::amidev-re} : here, we perform a generation dependent thinning of vertices: we thin a vertex in generation $n$ if either its incubation time is longer than $\delta t_n$, or its birth-time does not fall in the interval $(\delta t_{n-1}, t_{n-1}]$. We choose the thinning thresholds $t_n$  to be summable in an appropriate way. This thinning ensures that if an infinite ray is not thinned, then all the vertices on it satisfy that their birth-time is larger than the incubation time of their parent vertex, i.e., the forward process of the epidemic can proceed to infinity on any such ray in finite total time. We show that for an appropriate choice of $t_n$, the thinned process forms a supercritical process, whenever the offspring distribution is a plump power-law.  

The combination of Theorems \ref{thm::incu-main} and \ref{thm::incu-main-forw} shows that for \emph{plump power-law} offspring distributions, the explosivity of the two age-dependent BPs is \emph{necessary and sufficient}  for the explosivity of the epidemic model with incubation times (both backward and forward versions). For strictly plump distributions, (i.e.\ those that have heavier tail than power-law, infinitely often, e.g.\ $1-F(t)=C/\log t$), the question of sufficiency remains open for the forward process. The backward process is covered in Theorem \ref{thm::incu-main} for this case as well. The author conjectures that this is indeed the case.

Finally, in Section \ref{s::edu} we study some interesting examples of birth-time distributions: we show e.g. that singular distributions can easily form an explosive BP with power-law offspring distributions: we go as far as constructing a distribution function that allocates discreet masses on a sequence of points, that have $0$ as their only accumulation point.

\subsection{Open problems}
This paper is essentially only concerned with the question whether a CMJ branching process explodes, and thus can be considered as the first step in the analysis of these processes. From the random graph points of view, there is a need to investigate other characteristics of these processes as well: 
the \emph{generation of the $k$th born individual}, for instance, is a relevant question in understanding the hopcount between two vertices in the graph. Finer properties of the distribution of the explosion time could be also studied such as its \emph{tail-behaviour} and its \emph{behaviour around the origin}. 

Further, for \emph{conservative} BPs with infinite mean offspring, most questions are open, even for the `simple' age-dependent case, when all the individuals have independent birth-times. Here we distuinguish two cases: when the support of the distribution of birth-times goes down to $0$ or when it does not, i.e., when $\inf\mathrm{supp}\, F=0$ or when $\inf\mathrm{supp}\, F=c>0$. 

In the first case, from the random graph point of view, it is an urgent need to study the possible asymptotic behavior of the \emph{date of birth of the $k$th individual} in the process: in principle this can be any function that grows to infinity not faster than of order $\log\log k$. For the time to reach an individual in generation $k$, the asymptotic behavior is known under some conditions, see the work of Bramson \cite{bramson78} and Dekking and Host \cite{DeHo91}. The generation of the $kth$ born individual in this setting is another relevant quantity that needs further investigations.

When the support of the birth-time distribution is strictly above $0$, the Dirac-delta case is relatively well-understood: we are in the setting of an infinite mean Galton-Watson BP, see again \cite{Dav78, BarSch77} for further references. If the birth-times are of the form $\sigma= c + X$ for some random variable $X$, it seems to the author that the distribution of the additional birth-time $X$ must play an important role in the growth of the process: when this extra time would form an explosive process, we expect that the behaviour of the process will be to a large extent similar to that of the Galton-Watson case. The behaviour of the process when the extra time $X$ would form a conservative BP is for now unknown.

The behavior of conservative general CMJ branching processes is another interesting topic. Some recent work in this direction is that of Dereich \emph{et al}\cite{Der16, DerMor16}, who investigate condensation phenomenon in certain CMJ processes where no Malthusian parameter exists.

 \subsection{The model}\label{s:model}
A general Crump-Mode-Jagers type branching process (BP) is constructed as follows: an initial ancestor, $0$, the root of the process, is born at time zero. She is a mother\footnote{Jagers and Nerman \cite{jagers1984growth} used this wording. We find it quite natural, so we will stick to mothers and daughters.} of some children, and her reproduction process, denoted by $\xi_0$, contains the consecutive times of birth. We assume that $\xi_0$ is a copy of a point process $\xi$. We denote by $0\le \sigma_1\le \sigma_2\le \dots$ the consecutive appearance of points in $\xi$. We only assume for now that the total number of points in $\xi$ is countable almost surely. If it is finite and equals $k$, we set $\sigma_i=\infty$  for all $i>k$.  Then for $0\le s\le t\le \infty$ we define
\[ \xi[s,t]:=\sum_{i\in \N} \delta_{\sigma_i} \ind_{\sigma_i \in [s,t]}.\]
We abbreviate $\xi(t):=\xi[0,t]$. Note that $\xi[s,t]$ is a measure valued variable. We write $|\xi[s,t]|, |\xi(t)|$ for the total mass in the interval $[s,t]$ and $[0,t]$, respectively.

Each of the children of the root, born on date $\tau_i:=\sigma_i$, have their own `life story' given by reproduction processes $(\xi_{i})_{i\in \N}$ that start at date $\tau_i$, and conditioned on $\tau_i$, they are i.i.d.\ copies of the point process $\xi$. More generally, each descendant $x$ of the root reproduces in an i.i.d.\ manner, following a copy of the point process $\xi$, shifted to start at the descendant's birth date $\tau_x$. 
We can code the descendants in generation $n$ by words from the alphabet $\N^+$: an individual $x=i_1i_2 \dots i_n$ is the $i_n$th child of the $i_{n-1}$th child of the ... of the $i_1$th child of the initial ancestor, and her date of birth is $\tau_x=\tau_{i_1i_2\dots i_{n-1}}+\sigma_{i_n}^{(i_1i_2\dots i_{n-1})}$, where  $\sigma_{i_n}^{(i_1i_2\dots i_{n-1})}$ denotes the time of appearance of the $i_n$th point in the process $\xi_{i_1i_2\dots i_{n-1}}$.  We denote the set of all words of length $n$ by $G_n$, and the set of all possible words by $G$, i.e.,
\[ G_n:=\N^n, \quad G:= \{0\} \bigcup_{n\in \N} G_n.\]
We denote by $G_\infty$  the set of words of infinite length, i.e., an infinite sequence $i_1 i_2 \dots$, and we call these \emph{rays}. For a word $x\in G_n,\  n>k$, we denote $x_{|k}$ the truncation of $x$ at length $k$, that is, if $x=i_1i_2\dots i_n$ then $x_{|k}=i_1i_2\dots i_k$. This is the ancestor of $x$ in generation $k$. Similarly, we write $x_{|-k}$ for the $k$-th ancestor of the individual $x$, i.e., again for $x=i_1i_2\dots i_n$, $x_{|-k}=i_1i_2\dots i_{n-k}$. Note that $x_{|-1}$ is the mother of $x$.
In the sequel, we refer to $\tau_x$ as the \emph{date of birth} of $x$, and somewhat misusing the notation, $\sigma_x$ as the \emph{birth-time} of $x$, i.e.,  $\sigma_x:=\tau_x-\tau_{x_{|-1}}$ describes how old was the mother of $x$ when she gave birth to $x$. 
With these notation in mind, at time $t$, the already born children of individual $x$ are given by
\[ \xi_x(t-\tau_x),\]
where we define $\xi(t):=\emptyset$ for $t<0$.
We observe the already existing population at time $t$: \[ \CD(t):=\bigcup_{x\in G}\ \ind_{\tau_x\le t}. \]
We set $D(t):=|\CD(t)|$. 

\emph{The coming generation}. In the theory of CMJ branching processes, the coming generation at time $t$, $\CN(t)$, plays a crucial role: these are the individuals who are not yet born but whose mother is already born. Or, the other way round, the future children of already alive individuals, i.e., 
\[ \CN(t)=\bigcup_{x\in G} \xi_x[t-\tau_x, \infty] \ind_{\tau_x\le t}.  \]
We set $N(t):=|\CN(t)|$. We comment here on the naming differences: in many papers $\CD(t)$ is called the set of \emph{dead individuals} while $\CN(t)$ is called the set of \emph{alive individuals} at time $t$, however, when these names are used, then a mother reproduces upon death and gives rise to its chidren.
In this naming, the set $\CN(t)$ is already \emph{part of the branching process at time $t$}. Here we stick to the more mild naming tradition and suppose that mothers give birth during their life process $\xi$ (and we do not assume anything about their death).

\emph{Explosion of the process}. The branching process is called explosive if the event of reaching infinitely many individuals in finite time has positive probability. 
More precisely, 
\begin{definition}[Explosive vs.\ conservative BPs]
A branching process with reproduction function $\xi$ is called explosive if for some $t>0$
\[\Pv( D(t)=\infty) >0.\]
Otherwise it is called conservative.
\end{definition}
Clearly, if for some $t>0, \ \Pv(\xi(t)=\infty)>0$, then the process is explosive, since a single mother can produce infinitely many children. We call this sideway-explosion and we are not concerned with it in this note. We refer the interested reader to the work of Sagitov \emph{et al.} \cite{Sag15, SagLin15} and references therein. 
\begin{definition}Let $M_n:=\inf\{t: \CD(t) \cap G_n \neq 0 \}$ denote the first time an individual in $G_n$ is born, i.e., $M_n$ is the time to reach generation $n$. Let $\tau^{n}$ denote the date of birth of the $n$th appearing new individual in the population, i.e., $\tau^n=\inf\{t: D(t)=n+1\}$ is the time to reach size $n+1$ for the population.
\end{definition}
Note that $M_\infty$ is the length of the shortest ray to infinity, while $\tau^\infty$ is the explosion time, the first time when $D(t)=\infty$. Here we arrive at our first lemma. 

\begin{lemma}[Explosion\,=\,convergent rays]\label{lemma::rays}
Assume that almost surely for all $t>0$, $\xi(t)<\infty$ holds. Then, 
\be\label{eq::tau-lim} \lim_{n\to \infty} \tau^n = \lim_{n\to \infty} M_n:=V,\ee
where $V$ is called the explosion time of the process. 
This means that the event $\{V\le t\}$ is equivalent to having an infinite ray with finite total length, i.e., 
\[  \{ \exists\, x \in G_\infty: \forall n\in \N,  \ \tau_{x_{|n}} \le t \}.\]
If further $\xi(\infty)<\infty$ holds almost surely, then $\{N(t)=\infty\} \subseteq \{D(t) =\infty\}$.
\end{lemma}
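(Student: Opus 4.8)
The plan is to route everything through the random rooted tree
\[
A_t:=\{x\in G:\ \tau_x\le t\}
\]
of individuals born by time $t$, and to exploit that under the standing hypothesis this tree is locally finite, so that K\"onig's lemma applies. First I would record the bookkeeping: the sequences $(\tau^n)_n$ and $(M_n)_n$ are non-decreasing (birth dates appear in increasing order, and no individual of $G_n$ can be born before its ancestor in $G_{n-1}$), so the limits $V_1:=\lim_n\tau^n$ and $V_2:=\lim_n M_n$ exist in $[0,\infty]$. Moreover $\tau^n\le M_n$ for every $n$: for any $\varepsilon>0$ some $y\in G_n$ has $\tau_y<M_n+\varepsilon$, and then $y$ together with its $n$ proper ancestors $y_{|0}=0,\dots,y_{|n-1}$ is born by time $M_n+\varepsilon$, so $D(M_n+\varepsilon)\ge n+1$ and $\tau^n\le M_n+\varepsilon$; letting $\varepsilon\downarrow0$ gives $\tau^n\le M_n$, hence $V_1\le V_2$.

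Next I would fix $t\ge0$ and work on the almost sure event $E$ on which $|\xi_x(s)|<\infty$ for every individual $x\in G$ and every $s\ge0$; this is a countable intersection of full-probability events, using the hypothesis together with the monotonicity of $s\mapsto|\xi_x(s)|$. On $E$ each individual $x$ has only finitely many children with birth date in $[0,t]$ (namely $|\xi_x(t-\tau_x)|$ of them), so $A_t$ is an ancestor-closed, hence connected, locally finite subtree of $G$ rooted at $0$. I would then assemble the following chain, valid on $E$: (i) $D(t)=\infty$ is equivalent to $|A_t|=\infty$ and, since $\tau^n\le t$ holds exactly when at least $n+1$ individuals are born by time $t$, also to $\tau^n\le t$ for all $n$, i.e.\ to $V_1\le t$; (ii) by K\"onig's lemma applied to the locally finite rooted tree $A_t$, $|A_t|=\infty$ if and only if $A_t$ contains an infinite ray, i.e.\ if and only if there is $x\in G_\infty$ with $\tau_{x_{|n}}\le t$ for all $n$; (iii) such a ray forces $M_n\le\tau_{x_{|n}}\le t$ for every $n$ and hence $V_2\le t$, while conversely $V_2\le t$ implies $V_1\le t$ (as $V_1\le V_2$) and hence $D(t)=\infty$. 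Combining (i)--(iii), on $E$ and for every $t\ge0$,
\[
\{V_1\le t\}=\{D(t)=\infty\}=\{V_2\le t\}=\big\{\exists\,x\in G_\infty:\ \tau_{x_{|n}}\le t\ \text{for all }n\big\}.
\]

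Since the outer equality $\{V_1\le t\}=\{V_2\le t\}$ holds for every $t\ge0$, it follows that $V_1=V_2$ on $E$, i.e.\ almost surely; writing $V$ for this common value yields \eqref{eq::tau-lim}, and the last equality in the display is exactly the asserted description of $\{V\le t\}$. For the final statement I would additionally restrict to the almost sure event on which $|\xi_x(\infty)|<\infty$ for every $x\in G$: then each individual has finitely many children in total, so its set of not-yet-born children has finite mass $\big|\xi_x[t-\tau_x,\infty]\big|$, and in
\[
N(t)=\sum_{x\in G:\ \tau_x\le t}\big|\xi_x[t-\tau_x,\infty]\big|
\]
every summand is finite; consequently $N(t)=\infty$ can only occur if infinitely many summands are nonzero, which forces $\{x\in G:\tau_x\le t\}$ to be infinite, i.e.\ $D(t)=\infty$. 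Thus $\{N(t)=\infty\}\subseteq\{D(t)=\infty\}$ almost surely.

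The only genuinely non-bookkeeping step is (ii): the equivalence between ``infinitely many individuals are born by time $t$'' and ``there is a ray all of whose vertices are born by time $t$'' is precisely K\"onig's lemma, and it is exactly here that the hypothesis $\xi(s)<\infty$ a.s.\ is indispensable — it is what makes $A_t$ locally finite. Without it an individual could have infinitely many children by time $t$ none of which has further born descendants, and the equivalence (hence the lemma) would fail.
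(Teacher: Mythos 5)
Your proof is correct and follows essentially the same route as the paper: establish that each level $G_n\cap\CD(t)$ is finite under the hypothesis, deduce that $\{D(t)=\infty\}$ is equivalent to all levels being nonempty and hence to $\lim M_n\le t$, and handle the $N(t)$ statement by noting each summand is finite. The one place you improve on the write-up is in invoking K\"onig's lemma by name to pass from ``every level of $A_t$ is nonempty'' to ``$A_t$ contains an infinite ray'' — the paper uses exactly this step (its inductive finiteness bound is the local-finiteness hypothesis needed) but leaves it implicit.
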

\begin{proof}
Note that $\{\lim_{n\to \infty} \tau^n \le t\} = \{D(t)=\infty\}$. We show that this is equivalent to 
\be\label{eq::intersect} \bigcap_{k\in \N} \{ \exists\, x\in G_k, \tau_x\le t\}=\lim_{n\to \infty}\{ |G_k\cap \CD(t)|\ge 1\}=\{\lim_{n\to \infty} M_n\le t\} .\ee
The direction where \eqref{eq::intersect} implies $\{D(t)=\infty\}$ is obvious by noting that the left hand side is a nested sequence. Indeed, if $x \in G_n\cap \CD(t)$ then $x_{|k} \in \CD(t )$  for all $k\le n$, i.e., all the ancestors of $x$ are also already born, hence, $|G_k\cap \CD(t)|>0$.

For the reverse direction, clearly $|G_0\cap \CD(t)|=1<\infty$. Since $\xi(t)<\infty$ for all $t\in \R^+$, with inductive reasoning we also obtain $|G_n \cap \CD(t)|<\infty$  for all $n\in \N$, since
\[  |G_n\cap \CD(t)|=\bigcup_{x\in G_n} \ind_{\tau_x\le t}=   \bigcup_{x\in G_{n-1}\cap \CD(t)} \xi_x(t-\tau_x) <\infty\]
since it is a finite union of finitely many points. Since $D(t)=\sum_{n=1}^\infty |G_n \cap \CD(t)|$
the event $\{D(t)=\infty\}$ implies that $|G_n\cap \CD(t)|>0$ for infinitely many $n$. Again, if $x \in G_n\cap \CD(t)$ then $x_{|k} \in \CD(t )$  for all $k\le n$, i.e., all the ancestors of $x$ are also already born, hence, $|G_k\cap \CD(t)|>0$. As a result, we get that $\{D(t)=\infty\}$ implies \eqref{eq::intersect}.

For the last statement of the lemma, note that $N(t)=\sum_{x\in \CD(t)} \xi_x[t-\tau_x, \infty]$. Since each summand is finite,  $\{N(t)=\infty\}$ implies that the number of summands is infinite. 
\end{proof}

\section{An operator-approach.}\label{s::op}
From now on, we will assume $|\xi(t)|<\infty$ almost surely for all $t<\infty$. Here we generalise the operator approach used in \cite{Grey74, Sev67}.
In what follows, we write a recursive functional equation for the generating function of $D(t)$ and $N(t)$. Note that by the branching property, we can decompose $D(t)$ and $N(t)$ using the immediate children of the root
\[\ba N(t)&= \sum_{i\in \N} \left(N^{(i)}(t-\sigma_i)\ind_{\sigma_i<t} + \ind_{\sigma_i>t}\right) = \int_{0}^t N^{(x)}(t-x) \xi(\mathrm dx) + \int_{t}^\infty 1 \xi(\mathrm dx)\\
D(t)&=1 + \sum_{i \in \N} D^{(i)}(t-\sigma_i)\ind_{\sigma_i<t}=1+\int_{0}^t D^{(x)}(t-x) \xi(\mathrm dx),
\ea\]
where $N^{(i)}, D^{(i)}$ are i.i.d. copies of $N(t), D(t)$, and where $N^{(x)}, D^{(x)}$ abbreviates the fact that we need an i.i.d.\ copy of $N, D$ whenever $\xi$ puts non-zero mass at the point $x$.
Using these definitions, the generating function $G_D(s,t):=\Ev[s^{D(t)}]$ for $s\in [0,1]$ satisfies
\[ \ba G_D(s,t)&=s\Ev\left[\Ev\left[s^{\sum_{i \in \N} D^{(i)}(t-\sigma_i)\ind_{\sigma_i<t}}| \xi \right] \right]\\
&=s \Ev\left[ \prod_{i: \sigma_i<t} G_D(s, t-\sigma_i) \right] \\
&=s \Ev \left[ \exp\left\{ \int_{0}^t \log( G_D(s, t-x) ) \xi(\mathrm dx)  \right\}\right], \ea
\] 
where in the second line we used that given the values of $\sigma_i$, the processes $D^{(i)}$ are independent.
Similarly, the generating function $G_N(s,t):=\Ev[s^{N(t)}]$ for $s\in [0,1]$ satisfies
\[ \ba G_N(s,t)&=\Ev\left[\Ev\left[s^{\sum_{i \in \N} N^{(i)}(t-\sigma_i)\ind_{\sigma_i<t}+ \ind_{\sigma_i>t}}| \xi \right] \right]\\
&= \Ev\left[ \Big(\prod_{i: \sigma_i<t} G_N(s, t-\sigma_i) \Big) s^{\xi([t, \infty])} \right] \\
&= \Ev \left[ \exp\left\{ \int_{0}^t \log( G_N(s, t-x) ) \xi(\mathrm dx)+ \int_{t}^\infty \log s \xi(\mathrm dx) \right\}\right]. \ea
\]
We see that $\phi(t):=G_D(1,t)=\sum_{k=1}^\infty \Pv(D(t)=i)=\Pv(D(t)<\infty)$ satisfies
\be\label{eq::phi-recursion-general} \phi(t)= \Ev \left[ \exp\left\{ \int_{0}^t \log( \phi( t-x) ) \xi(\mathrm dx)  \right\}\right].\ee
Further note that $\phi_N(t)=G_N(1,t)$ satisfies the exact same equation. 
Let us introduce the operator $(T_\xi f)(\cdot)$ acting on functions $f:\R^+\to \R^+$ as follows:
\be\label{eq::txi-def} \big(T_\xi f\big)(t)= \Ev \left[ \exp\left\{ \int_{0}^t \log( f( t-x) ) \xi(\mathrm dx)  \right\}\right],\ee
and set $\Omega_{[0,1]}:=\{f:\R^+\to [0,1]\}$ the space of functions with values in $[0,1]$. Then, $T_{\xi}: \Omega_{[0,1]}\to\Omega_{[0,1]}$. Indeed, for any function with values in $[0,1]$, $\int_{0}^t \log (f(t-x)) \xi(\mathrm dx)<0$, exponentiation and taking expectation yields the statement. Further, the monotonicity of the logarithm function, integration, exponentiation and expectation implies that if for two functions $f,g \in \Omega_{[0,1]}, f(t) \le g(t)$ holds for all $t\ge0$ then also $\big(T_\xi f\big)(t) \le \big(T_\xi g\big)(t)$, that is, $T_\xi$ preserves ordering.
\subsubsection{Probabilistic interpretation}
Let us note that the \emph{explosion time of the branching process} can be written as
\[V:=\inf\{ t: D(t)=\infty\}=\sup\{ t: D(t)<\infty\}\]
Then, $\{V>t\}=\{D(t)<\infty\}$ and hence
\be\label{eq::dist-explosion} \Pv(V\le t) = \Pv(D(t)=\infty) = 1-\phi(t).\ee
As a result, we see that $1-\phi(t)$ is the distribution function of $V$.
Note that \eqref{eq::phi-recursion-general} uses the basic branching property of the process: the fact that conditioned on the birth time of the first generation individuals, the different subtrees are independent. It is clear that the BP reaches infinitely many individuals if and only if one of the branches reaches infinitely many individuals, hence, we obtain the distributional identity
 \[ \ V\  {\buildrel d \over =}\  \min_{i \in \N}\, \{ \sigma_i+V_i \},\]
where $V_i$ are i.i.d. from the same distribution as $V$, and $V_i$ denotes the (possibly infinite) explosion time of the subtree of the $i$th child of the  root. 
As a result, 
\[ \phi(t)= \Pv(V>t)=\Pv(\forall i: V_i+ \sigma_i >t)= \Ev\left[ \prod_{i: \sigma_i\le t} \phi(t-\sigma_i) \right]= \big( T_\xi \phi \big) (t).\]

In the coming section, we describe some basic methods of determining whether a process explodes or not by analysing the operator $T_\xi$ more carefully. We also obtain some further properties of explosion times. 
\subsection{Properties of the operator $T_\xi$ and the explosion time}
\begin{lemma}\label{lem::fixpoint} The function $\phi(t):=\Pv(D(t)< \infty)$ is the smallest function that solves the fixed point equation
\be\label{eq::fixpoint} \phi(t)=\big(T_\xi \phi\big)(t), \ee
in the sense that for any other function $f(t)\in \Omega_{[0,1]}$ with $f=T_\xi f$, the inequality $\phi(t)\le f(t)$ holds for all $t\ge 0$. Hence, the branching process with reproduction process $\xi$ is explosive if and only if there exists a fixed point function $\phi(t)\not \equiv 1\in \Omega_{[0,1]}$ that solves \eqref{eq::fixpoint}.
\end{lemma}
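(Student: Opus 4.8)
The plan is to exhibit $\phi$ as the pointwise increasing limit of the iterates of $T_\xi$ applied to the zero function, and then to invoke that $T_\xi$ is order-preserving. Recall from the excerpt that $\phi(t)=\P(D(t)<\infty)$ already satisfies $\phi=T_\xi\phi$ by \eqref{eq::phi-recursion-general}, so the real content of the lemma is the \emph{minimality} of $\phi$ among fixed points in $\Omega_{[0,1]}$, together with the reformulation of explosivity.

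First I would set $\psi_n(t):=\P(M_n>t)$, with $M_n$ the time to reach generation $n$ (defined just before Lemma \ref{lemma::rays}); thus $\psi_0\equiv 0$ on $\R^+$ since $M_0=0$. The branching property yields $M_n=\min_{i\in\N}\bigl(\sigma_i+M_{n-1}^{(i)}\bigr)$, where the $M_{n-1}^{(i)}$ are i.i.d.\ copies of $M_{n-1}$ given $(\sigma_i)_i$; conditioning on $\xi$, and noting that a child with $\sigma_i>t$ has $\sigma_i+M_{n-1}^{(i)}\ge\sigma_i>t$ automatically (hence contributes a factor $1$), one arrives at exactly $\psi_n=T_\xi\psi_{n-1}$, i.e.\ $\psi_n=T_\xi^{\,n}\psi_0$. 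Since $M_n$ is non-decreasing in $n$ and $M_n\uparrow V$ by Lemma \ref{lemma::rays}, continuity of probability along the increasing events $\{M_n>t\}$ gives $\psi_n(t)\uparrow\P(V>t)=\phi(t)$ for every $t\ge 0$, the last equality being \eqref{eq::dist-explosion}.

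The minimality step is then a one-line induction. Let $f\in\Omega_{[0,1]}$ solve $f=T_\xi f$. Then $\psi_0\equiv 0\le f$, and if $\psi_{n-1}\le f$ pointwise, applying the order-preserving operator $T_\xi$ (monotonicity noted right after \eqref{eq::txi-def}) gives $\psi_n=T_\xi\psi_{n-1}\le T_\xi f=f$; hence $\psi_n\le f$ for all $n$, so $\phi=\lim_n\psi_n\le f$. For the final equivalence: if the process is explosive then $\phi(t)=\P(D(t)<\infty)<1$ for some $t$, so $\phi$ itself is a fixed point with $\phi\not\equiv 1$; conversely, if some fixed point $f\not\equiv 1$ exists, then $\phi\le f$ forces $\phi\not\equiv 1$ (otherwise $1=\phi\le f\le 1$ would give $f\equiv 1$), whence $\P(D(t)=\infty)>0$ for some $t$ and the process explodes.

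I expect the only genuinely delicate point to be the rigorous justification of the recursion $\psi_n=T_\xi\psi_{n-1}$: the bookkeeping between ``generation $n$ not yet globally reached'' and ``generation $n-1$ not yet reached inside a subtree'', the treatment of atoms of $\xi$ (where $\log f$ in \eqref{eq::txi-def} may be $-\infty$, consistently producing a factor $0$), and the endpoint $x=t$ in the integral. All of these mirror the derivation of the functional equation for $G_D(s,t)$ already carried out in the excerpt, so they amount to routine bookkeeping rather than a real obstacle; the substantive ideas are just the iteration $\psi_n=T_\xi^{\,n}\psi_0\uparrow\phi$ and the order-preservation of $T_\xi$.
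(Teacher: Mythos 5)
Your proposal is correct and takes essentially the same approach as the paper: your iterates $\psi_k(t)=\Pv(M_k>t)$ coincide exactly with the paper's $\phi_k(t)=\Pv(|G_k\cap\CD(t)|=0)$ (since $\{M_k>t\}=\{|G_k\cap\CD(t)|=0\}$), and both proofs establish $\phi_k\uparrow\phi$ together with the recursion $\phi_k=T_\xi\phi_{k-1}$, then deduce minimality by the one-line induction from $\phi_0\equiv 0\le f$ using order-preservation of $T_\xi$. The only cosmetic difference is that you invoke \eqref{eq::phi-recursion-general} for the fixed-point identity $\phi=T_\xi\phi$, whereas the paper re-derives it by passing to the limit in $\phi_k=T_\xi\phi_{k-1}$.
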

\begin{proof}
For the special case given in Example \ref{ex::agedep} below, it was observed in \cite{Grey74, Sev67} that for the following sequence of functions
\be\label{eq::phi-def} \phi_0\equiv 0, \quad \phi_k(t):=\big( T_\xi \phi_{k-1}\big) (t), \ee
$\phi_k(t)$ converges pointwise to $\phi(t)=\Pv(D(t)<\infty)$. 
Indeed, note that the root is born immediately in the process, hence $|G_0\cap \CD(t)|=1$. Clearly we have then $\phi_0(t)=\Pv( |G_0\cap \CD(t)| = 0).$
We prove by induction that $\phi_{k}(t)=\Pv(|G_k\cap \CD(t)| = 0)$. Indeed, since $\log(0)=-\infty$ and an empty integral equals $0$, we have
\[ \ba \phi_1(t)&= \Ev \left[ \exp\left\{ \int_{0}^t \log(0) \xi(\mathrm dx)  \right\}\right]=\Ev[0\ind_{\xi(t)>0} + 1\ind_{\xi(t)=0}] \\
&=\Pv(\xi(t)=0) = \Pv(|G_1\cap \CD(t)|= 0). \ea\]
Similarly,
\[ \ba \phi_k(t)&= \Ev \left[ \exp\left\{ \int_{0}^t \log(\Pv(|G_{k-1}\cap \CD^{(x)}(t-x)|=0)) \xi(\mathrm dx)  \right\}\right]\\ 
&=\Ev\left[\prod_{i: \sigma_i\le t} \left( \Pv(|G_{k-1}\cap \CD^{(i)}(t-\sigma_i)|=0) \right)\right]  \\
&=\Pv(|G_k\cap \CD(t)|=0). \ea\]
Clearly we have $\{|G_k\cap \CD(t)|=0\}\subseteq \{|G_n\cap \CD(t)|=0\}$ for all $n\ge k$, hence, it is easy to see that for any fixed $t\ge0$, $\phi_k(t)$ is non-decreasing in $k$. Since also $0<\phi_k(t)<1$, the pointwise limit
\be\label{eq::phi-1} \phi(t):=\lim_{k\to \infty} \phi_k(t)=\lim_{k\to \infty} \Pv( |G_k\cap \CD(t)|=0)\ee
exists, and also 
\be\label{eq::phi-2} \phi(t)= \lim_{k\to \infty} \phi_k(t) = \lim_{k\to \infty} \big(T_\xi \phi_k\big)(t) = \big(T_\xi \phi \big)(t).\ee
hence $\phi(t)=\lim_{k\to \infty} \phi_k(t)$ satisfies the fixed point equation \eqref{eq::fixpoint}. It is also 
 the \emph{smallest solution} of this fixed point equation in $\Omega_{[0,1]}$. Indeed, for any other fixed point function $f\in \Omega_{[0,1]}$, $\phi_0 \le f$ holds trivially. Apply the operator $T_\xi$ $k$ times to both sides to obtain $\phi_k\le f$, (recall that $T_\xi$ preserves ordering), and take the limit to obtain $\phi\le f.$

Finally, it is easy to see that $f(t)\equiv 1$ always satisfies \eqref{eq::fixpoint}. Indeed, since $\log 1=0$, the integral gives $\exp\{0\}=1$ for all $t\in \R^+$.
Hence, the process is explosive if and only if there is a solution $\phi(t)$ to \eqref{eq::fixpoint} with $\phi(t)<1$ for some $t\in \R^+$. 
\end{proof}

\begin{claim}\label{cl::noninc}The function $\phi(t)=\Pv(D(t)<\infty)$ is non-increasing in $t$. \end{claim}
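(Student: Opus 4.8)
The plan is to reduce the statement to the elementary fact that the tail function of a $[0,\infty]$-valued random variable is non-increasing. Indeed, under the standing assumption $|\xi(t)|<\infty$ almost surely for all $t<\infty$, Lemma~\ref{lemma::rays} guarantees that the explosion time $V=\lim_{n\to\infty}\tau^n=\lim_{n\to\infty}M_n$ is a well-defined random variable taking values in $(0,\infty]$, and by \eqref{eq::dist-explosion} we have $\phi(t)=\Pv(D(t)<\infty)=\Pv(V>t)$. Since $t\mapsto\Pv(V>t)$ is the (complementary) distribution function of $V$, it is automatically non-increasing in $t$, which is exactly the claim.

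If one prefers to avoid invoking $V$ and argue directly, the alternative route is via sample-path monotonicity of $D(\cdot)$. First I would observe that for $s\le t$ one has $\CD(s)=\bigcup_{x\in G}\ind_{\tau_x\le s}\subseteq \bigcup_{x\in G}\ind_{\tau_x\le t}=\CD(t)$, since the set of individuals born by time $s$ is contained in the set of those born by time $t$; hence $D(s)\le D(t)$ pointwise on the probability space. Consequently $\{D(s)=\infty\}\subseteq\{D(t)=\infty\}$, so $\Pv(D(s)=\infty)\le\Pv(D(t)=\infty)$, and taking complements gives $\phi(s)\ge\phi(t)$.

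A third option, fitting the operator framework already developed, is to use the approximating sequence from \eqref{eq::phi-def}: since $\phi_k(t)=\Pv(|G_k\cap\CD(t)|=0)$ and the event $\{|G_k\cap\CD(t)|=0\}$ shrinks as $t$ increases (again because $\CD(t)$ grows), each $\phi_k$ is non-increasing in $t$; the pointwise limit $\phi=\lim_{k\to\infty}\phi_k$ of non-increasing functions is then non-increasing as well.

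There is no real obstacle here: the claim is a soft monotonicity statement, and the only point requiring a word of justification is that $V$ (equivalently $D(t)$) is a genuine, measurable object — which is precisely what Lemma~\ref{lemma::rays} and the standing finiteness assumption provide. I would present the first argument as the main proof and perhaps remark on the equivalent sample-path phrasing.
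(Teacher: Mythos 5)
Your proposal is correct, and genuinely different from (and simpler than) the paper's proof. The paper argues analytically within the operator framework: it inducts on $k$ that each $\phi_k$ is non-increasing, by writing $\phi_{k+1}(t)-\phi_{k+1}(s)$ as an expectation of a product whose second factor $\exp\bigl\{\int_s^t\log\phi_k(t-x)\,\xi(\mathrm{d}x)\bigr\}-1$ is nonpositive, then passes to the pointwise limit. (Strictly speaking, the displayed identity in the paper's proof should be an inequality $\le$, since $\phi_{k+1}(s)$ involves $\phi_k(s-x)\ge\phi_k(t-x)$ under the inductive hypothesis; the conclusion is unaffected.) Your main argument instead reads the claim off the probabilistic interpretation $\phi(t)=\Pv(V>t)$ from \eqref{eq::dist-explosion}: a tail function is non-increasing for free. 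Your second variant — $\CD(s)\subseteq\CD(t)$ for $s\le t$, hence $\{D(t)<\infty\}\subseteq\{D(s)<\infty\}$ — is arguably the most robust, since it is a one-line sample-path observation that does not rely on the identification of $\{V>t\}$ with $\{D(t)<\infty\}$. Your third variant is closest in spirit to the paper's, but substitutes the probabilistic identity $\phi_k(t)=\Pv(|G_k\cap\CD(t)|=0)$ (established in the proof of Lemma~\ref{lem::fixpoint}) for the analytic manipulation of $T_\xi$. All three routes are valid; what the paper's approach buys is uniformity with the operator machinery used throughout the section, while yours buys brevity and transparency.
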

\begin{proof}
We use the sequence of functions $(\phi_k)_{k\ge 0}$ as in the proof of Lemma \ref{lem::fixpoint}. Trivially, $\phi_0(t)$ is non-increasing, and since $T_\xi: \Omega_{[0,1]} \to \Omega_{[0,1]}$,   $\phi_k(t)\le 1$ for all $k\in \N, t\in \R^+$. We write 
\[ \phi_{k+1}(t)-\phi_{k+1}(s) = \Ev\left[ \exp\left\{ \int_{0}^s \log(\phi_{k}(t-x)) \xi(\mathrm dx)  \right\} \cdot\left( \exp\left\{ \int_{s}^t  \log(\phi_{k}(t-x)) \xi(\mathrm dx) \right\} -1  \right) \right]. \]
Note that the factor $\exp\left\{ \int_{s}^t  \log(\phi_{k}(t-x)) \xi(\mathrm dx) \right\} -1$ on the right hand side is nonpositive (since $\phi_{k}(t-x)<1$), hence the right hand side is at most $0$. To finish, note that $\phi(t)=\lim_{k\to \infty} \phi_k(t) \ge \lim_{k\to \infty} \phi_k(s)=\phi(s)$.  
 \end{proof}
A bit more sophisticated statement is to see the following:
\begin{claim}[Explosion can happen arbitrarily fast]\label{cl::fastexp} Let $\phi$ be the smallest solution to \eqref{eq::fixpoint} in $\Omega_{[0,1]}$.
Then either $\phi(t)\equiv 1$ or  $\phi(t)<1$ for all $t>0$. 
\end{claim}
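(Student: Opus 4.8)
The plan is to use the probabilistic reading of $\phi$ obtained above, $\phi(t)=\Pv(D(t)<\infty)=\Pv(V>t)$, rather than to argue analytically from the operator $T_\xi$: the operator carries no useful self-similarity in the time variable, so a direct analytic comparison of $\phi(t)$ with $\phi(t_0)$ looks awkward. Instead I would prove the following self-improving statement: \emph{if $\Pv(V\le s)>0$ for some $s\ge 0$, then $\Pv(V\le\eps)>0$ for every $\eps>0$.} Granting this, the claim follows immediately: if $\phi\not\equiv 1$ then $\Pv(V\le s)=1-\phi(s)>0$ for some $s\ge 0$, hence $\phi(t)=\Pv(V>t)<1$ for all $t>0$; and otherwise $\Pv(V\le t)=0$ for all $t\ge 0$, so $V=\infty$ almost surely and $\phi\equiv 1$.

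The engine is a simple geometric fact: an infinite ray of finite total length, seen from far enough down, has arbitrarily small residual length. Suppose $V\le s$. By Lemma \ref{lemma::rays} there is a ray $x\in G_\infty$ with $\tau_{x_{|n}}\le s$ for all $n$; the sequence $n\mapsto \tau_{x_{|n}}$ is nondecreasing and bounded by $s$, hence converges to some $L\in[0,s]$. Fix $\eps>0$ and choose $n$ with $L-\tau_{x_{|n}}<\eps$. Re-centering time at $\tau_{x_{|n}}$, the subtree rooted at $y:=x_{|n}$ contains the infinite ray $x_{|n},x_{|n+1},\dots$ whose total length is $L-\tau_{x_{|n}}<\eps$; applying Lemma \ref{lemma::rays} to that subtree (which, given that $y$ is born, is an independent copy of the whole branching process) shows that its explosion time $V_y$ satisfies $V_y<\eps$. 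Hence $\{V\le s\}\subseteq\bigcup_{y\in G}\{V_y<\eps\}$.

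Now I would take the union bound over the countable index set $G$. Since on the event that $y$ is born the subtree of $y$ is an independent copy of the process (and $V_y=\infty$ when $y$ is never born), $V_y$ conditioned on $\{y\text{ born}\}$ has the same law as $V$, so $\Pv(V_y<\eps)\le \Pv(V<\eps)\le \Pv(V\le\eps)=1-\phi(\eps)$ for every $y\in G$. Therefore
\[
0<\Pv(V\le s)\le \sum_{y\in G}\Pv(V_y<\eps)\le \sum_{y\in G}\bigl(1-\phi(\eps)\bigr).
\]
If $\phi(\eps)=1$ the right-hand side vanishes, contradicting the strict positivity on the left; hence $\phi(\eps)<1$. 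As $\eps>0$ was arbitrary, this is the self-improving statement and the claim follows.

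The point requiring care is exactly this last display: the union bound over the \emph{infinite} set $G$ is worthless as a quantitative estimate and is informative only in its contrapositive form, ``$\phi(\eps)=1\Rightarrow\Pv(V\le s)=0$''. Apart from that, the proof is bookkeeping: one must state Lemma \ref{lemma::rays} cleanly enough to apply it both to the whole tree and to the subtree rooted at a generation-$n$ vertex, and record that, conditionally on that vertex being born, its subtree is a fresh i.i.d.\ copy of the process, so that $\Pv(V_y<\eps)\le 1-\phi(\eps)$. I expect no genuine obstacle beyond phrasing these measurability and independence statements precisely.
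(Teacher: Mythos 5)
Your proof is correct, but it is a genuinely different argument from the paper's. The paper argues analytically from the fixed-point structure: assuming $\phi\equiv 1$ on $[0,t_0]$ and $\phi<1$ beyond, it sets $\psi(t):=\phi(t+t_0)$ and observes that, precisely because $\log\phi\equiv 0$ on $[0,t_0]$, the contribution from that range drops out of the integral defining $T_\xi$, so $\psi$ is again a fixed point in $\Omega_{[0,1]}$; since $\psi<\phi$ on $(0,t_0]$, this contradicts minimality of $\phi$. Your remark that the operator ``carries no useful self-similarity in the time variable'' is therefore a misdiagnosis — a time-shift is exactly the self-similarity the paper exploits, and it gives a shorter proof. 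What your probabilistic route buys is transparency: the self-improving statement $\Pv(V\le s)>0\Rightarrow\Pv(V\le\eps)>0$ for all $\eps>0$, proved by noting that a convergent ray has arbitrarily small residual length beyond some generation $n$, and then reading off $V_{x_{|n}}<\eps$ via Lemma~\ref{lemma::rays} applied to the subtree, followed by a countable union bound over $G$ used only in its contrapositive form (``$\phi(\eps)=1\Rightarrow$ every term vanishes $\Rightarrow\Pv(V\le s)=0$''). All the pieces you flag as needing care — that the subtree rooted at a born vertex is a fresh copy of the process so $\Pv(V_y<\eps)\le 1-\phi(\eps)$, and that the union bound over the countable $G$ is sound — do go through under the standing assumption $|\xi(t)|<\infty$ a.s.\ for all finite $t$ and the identification $\phi(t)=\Pv(V>t)$ from Lemma~\ref{lem::fixpoint}. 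Both proofs are valid; yours is more intuitive but longer, the paper's is slicker and stays entirely inside the operator framework it is developing.
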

\begin{proof} Suppose $\phi(t)\equiv 1$ in an interval $[0,t_0]$ but $\phi(t)<1$ for $t>t_0$. 
 Consider the function $\psi(t):=\phi(t+t_0)$ for $t>t_0$. Then $\psi(t)<1$ for all $t>0$, and further,
\be\label{eq::psi-1}\ba (T_\xi \psi)(t)&= \Ev\left[ \exp \left\{ \int_{0}^{t} \log(\phi(t+t_0-x)) \xi(\mathrm dx) \right\} \right] \\
&=\Ev\left[ \exp \left\{ \int_{0}^{t+t_0} \log(\phi(t+t_0-x)) \xi(\mathrm dx) \right\} \right], \\
 \ea\ee
since $\phi(t+t_0-x)=1$ for $x\in [t,t+t_0]$ results in $\int_{t}^{t+t_0}  \log(\phi(t+t_0-x)) \xi(\mathrm dx)=0$.
Note that the rhs of \eqref{eq::psi-1} equals $\phi(t+t_0)=\psi(t)$. As a result, $\psi(t)=(T_\xi \psi) (t)$ is also satisfied. To finish, recall that $\phi(t)$ is per definition the smallest solution to the fixpoint equation \eqref{eq::fixpoint}. This is a contradiction, since $\psi(t)$ is also a solution and $\psi(t)<\phi(t)$ on $[0,t_0]$.
\end{proof}
As usual in the theory of branching processes, we say that the BP \emph{survives} if $N(t)$, the size of the coming generation never reaches zero, that is, the event
$\{N(t)\ge 1 \ \forall t\ge 0\}$ holds. Similarly, we say that \emph{extinction occurs} or the process \emph{dies out} if $\{\exists t\ge 0, N(t)=0\}$, that is, the total size of the existing population $D(t)$ eventually stops increasing. 
\begin{claim}[No conservative survival possible]\label{cl::nocons}
Let $\xi$ be the reproduction process of an explosive BP, that is, let us assume that the equation $\phi(t)=\big(T_\xi \phi\big)(t)$ has a non-trivial solution $\phi(t)<1$ for $t>0$. Then, almost surely, the process explodes on survival, that is,
\[  \Pv(\{V<\infty \}\circ \{\text{the BP survives}\})=0, \]
where $A\circ B=(A\setminus B) \cup (B\setminus A)$ is the symmetric difference of the events $A,B$.
\end{claim}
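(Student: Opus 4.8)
\section*{Proof proposal}

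The plan is to pass to the limit $t\to\infty$ in the fixed-point identity $\phi=T_\xi\phi$ and to identify $\phi(\infty):=\lim_{t\to\infty}\phi(t)$ with the extinction probability of the embedded Galton--Watson process. Recall from Lemma~\ref{lem::fixpoint} and Claims~\ref{cl::noninc}--\ref{cl::fastexp} that $\phi(t)=\Pv(V>t)=\Pv(D(t)<\infty)$ is non-increasing and, since the process is explosive, $\phi(t)<1$ for every $t>0$; hence $\phi(\infty)=\Pv(V=\infty)$ exists and lies in $[0,1)$. Write $Z:=|\xi(\infty)|\in\{0,1,\dots\}\cup\{\infty\}$ for the number of children of the root and $f(s):=\Ev[s^{Z}]$ for its generating function, with the convention $s^{\infty}=0$ for $s\in[0,1)$. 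The key point is that, because $\phi$ is non-increasing, the map $t\mapsto \int_{0}^{t}\log\phi(t-x)\,\xi(\dd x)=\sum_{i:\sigma_i\le t}\log\phi(t-\sigma_i)$ is itself non-increasing in $t$ (each summand decreases and new nonpositive summands appear), so $t\mapsto \exp\{\int_{0}^{t}\log\phi(t-x)\,\xi(\dd x)\}$ is non-increasing, $[0,1]$-valued, and converges pointwise almost surely to $\phi(\infty)^{Z}$ (for each $i$ with $\sigma_i<\infty$ one has $\phi(t-\sigma_i)\to\phi(\infty)$, and the standing assumption $|\xi(t)|<\infty$ excludes sideways explosion). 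Monotone (equivalently dominated) convergence in the expectation then yields
\be\label{eq::nocons-lim}
\phi(\infty)=\Ev\big[\phi(\infty)^{Z}\big]=f(\phi(\infty)).
\ee

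Next I would identify this fixed point. Since $f$ is a probability generating function, the smallest solution in $[0,1]$ of $s=f(s)$ equals $q:=\Pv(\text{the genealogical tree is finite})$, the extinction probability of the Galton--Watson process with offspring law $Z$. If $\Ev[Z]\le1$ then $q=1$ is the only solution, contradicting \eqref{eq::nocons-lim} together with $\phi(\infty)<1$; hence $\Ev[Z]>1$ and $f$ has precisely the two fixed points $q<1$ and $1$ in $[0,1]$. As $\phi(\infty)<1$, \eqref{eq::nocons-lim} forces $\phi(\infty)=q$, i.e.\ $\Pv(V=\infty)=q=\Pv(\text{genealogy finite})$. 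Since a finite genealogy produces only finitely many individuals, each born in finite time, we have $\{\text{genealogy finite}\}\subseteq\{V=\infty\}$; two events, one containing the other and with equal probability, coincide almost surely, so $\{V<\infty\}=\{\text{genealogy infinite}\}$ a.s. Equivalently $\Pv(V<\infty\mid\text{genealogy infinite})=1$: conditionally on the genealogy surviving, the process explodes, which is the substance of the claim.

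It remains to match $\{\text{the BP survives}\}$ with $\{\text{genealogy infinite}\}$ up to a null set. The inclusion $\{\text{survives}\}\subseteq\{\text{genealogy infinite}\}$ is immediate, since on a finite genealogy $N(t)=0$ for all large $t$. Conversely, $\{\text{genealogy infinite}\}\setminus\{\text{survives}\}$ is exactly the event $E_0$ that the genealogy is infinite while every individual is born before some finite time; on $E_0$ the tree is locally finite (a vertex of infinite out-degree would have children born at arbitrarily large times), so K\H{o}nig's lemma provides an infinite ray of finite total length. One then checks $\Pv(E_0)=0$ by a routine K\H{o}nig/Borel--Cantelli argument: an infinite genealogical tree almost surely contains infinitely many vertices whose reproduction processes, being independent given the genealogy, force some individual to be born after any prescribed time, so $\sup_{x}\tau_x=\infty$ a.s.\ on $\{\text{genealogy infinite}\}$. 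Combining the three steps, $\{V<\infty\}=\{\text{genealogy infinite}\}=\{\text{the BP survives}\}$ almost surely, whence $\Pv(\{V<\infty\}\circ\{\text{the BP survives}\})=0$.

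\textbf{Main obstacle.} The delicate part is the limiting identity \eqref{eq::nocons-lim}: one must justify the almost-sure monotone convergence of $\exp\{\int_{0}^{t}\log\phi(t-x)\,\xi(\dd x)\}$ to $\phi(\infty)^{Z}$ carefully, keeping track of the conventions in the degenerate cases $Z=\infty$ and $\phi(\infty)=0$ (where $\phi$ may already vanish on a half-line). A secondary, more bookkeeping-type obstacle is the null-set estimate $\Pv(E_0)=0$, i.e.\ ruling out an infinite genealogy entirely realised in finite time; everything else is elementary generating-function analysis.
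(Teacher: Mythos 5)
Your proposal follows the same core route as the paper's proof: let $t\to\infty$ in the fixed-point identity to obtain $\phi(\infty)=\Ev[\phi(\infty)^{|\xi(\infty)|}]$, recognise this as the Galton--Watson extinction equation for the embedded discrete tree, and then combine a one-sided event inclusion with equality of probabilities. You add two pieces of rigour that the paper only states: a genuine monotone/dominated-convergence justification of the passage $t\to\infty$, and the identification $\phi(\infty)=q$ via the fixed-point structure of the offspring generating function (observing that $\phi(\infty)<1$ forces $\Ev|\xi(\infty)|>1$), where the paper merely says ``this is the exact same recursion''. You also route through the intermediate event $\{\text{genealogy infinite}\}$, which makes explicit the two inclusions that need checking, rather than arguing directly from $\{N(t)=0\}$ to $\{D(t)<\infty\}$ as the paper does.

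The part you label a ``secondary, bookkeeping-type obstacle'' --- proving $\Pv(E_0)=0$ --- is in fact the only substantive step, and it is exactly where the paper's own proof has a slip: the paper justifies $\{N(t)=0\}\subseteq\{D(t)<\infty\}$ by ``the last statement of Lemma~\ref{lemma::rays}'', but that statement reads $\{N(t)=\infty\}\subseteq\{D(t)=\infty\}$, whose contrapositive is $\{D(t)<\infty\}\subseteq\{N(t)<\infty\}$, not the implication needed. Your Borel--Cantelli sketch is the right idea but tacitly requires $\Pv(\sigma_1>\delta)>0$ for some $\delta>0$; in the degenerate case $\sigma\equiv 0$ with a supercritical, a.s.\ finite offspring distribution, $E_0$ has positive probability (on survival of the Galton--Watson tree one has $D(\varepsilon)=\infty$ and yet $N(\varepsilon)=0$ for every $\varepsilon>0$), so the claim as literally stated fails. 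Under the standing assumption imposed from Section~\ref{s::3} onward ($\Ev|\xi(0)|=0$ and $\Ev|\xi(t)|=\infty$ for $t>0$) this degeneracy is excluded, the Borel--Cantelli step goes through, and your argument is complete --- but I would state that hypothesis, and the nontriviality of $\Pv(E_0)=0$, explicitly rather than calling it routine.
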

\begin{proof} First, note that $\Pv(V=\infty)=\lim_{t\to \infty}\phi(t):=\phi(\infty)$. Per definition, $\phi(\infty)$ satisfies the equation
\[ \phi(\infty) = \big(T_\xi \phi\big)(\infty)= \Ev\left[ \prod_{i:\sigma_i<\infty} \phi(\infty)\right] = \Ev\left[ \phi(\infty)^{|\xi(\infty)|}\right].\]
Note that this is the exact same recursion as the one that the extinction probability of the BP satisfies. Indeed, the process goes extinct if and only if all the subtrees of the children of the root go extinct. Hence, $\Pv(\text{extinction occurs})=\Pv(V=\infty)=\Pv(\text{the process does not explode}).$ Note also that 
$\{\text{extinction occurs}\}\subseteq\{V=\infty\}$, since the first event, $\{N(t)=0\}$ implies that $\{D(t)<\infty\}$ by the last statement of Lemma \ref{lemma::rays}. Taking the complement of the events finishes the proof.
\end{proof}
\subsection{General methods to test explosivity}
The operator $T_\xi$ provides a method to show that a particular process is explosive. This is the content of the next lemma, that first appeared in \cite{Grey74} for the special case Example \ref{ex::agedep}. 
\begin{claim}[Test-functions]\label{cl::testfunction}
The branching process with reproduction function $\xi$ is explosive if and only if there exists $t_0>0$ and a function $f:[0,t_0] \to [0,1]$ with $f \not\equiv 1$ and 
\be\label{eq::test} f(t) \ge \big(T_\xi f\big) (t)\ee
for all $t\in [0,t_0]$.  
\end{claim}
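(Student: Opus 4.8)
The plan is to prove the two implications separately, with the monotone iteration $\phi_0\equiv 0$, $\phi_k=T_\xi\phi_{k-1}$ from the proof of Lemma~\ref{lem::fixpoint} as essentially the only tool.

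For the \emph{only if} direction I would exhibit $\phi$ itself (suitably restricted) as a test function. If the process is explosive then by Lemma~\ref{lem::fixpoint} the function $\phi(t)=\Pv(D(t)<\infty)$ solves $\phi=T_\xi\phi$ and is $\not\equiv 1$, and by Claim~\ref{cl::fastexp} (or, alternatively, by the definition of explosivity together with the monotonicity in Claim~\ref{cl::noninc}) there is $t_0>0$ with $\phi(t_0)<1$. Put $f:=\phi|_{[0,t_0]}$. For $t\in[0,t_0]$ the quantity $(T_\xi f)(t)$ only reads the values of $f$ on $[0,t]\subseteq[0,t_0]$, so $(T_\xi f)(t)=(T_\xi\phi)(t)=\phi(t)=f(t)$; thus \eqref{eq::test} holds (with equality) and $f(t_0)<1$ shows $f\not\equiv 1$.

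The substance is the \emph{if} direction. Given $t_0>0$ and $f\colon[0,t_0]\to[0,1]$ with $f\not\equiv 1$ and $f\ge T_\xi f$ on $[0,t_0]$, I would prove by induction on $k$ that $\phi_k\le f$ on $[0,t_0]$. The base case holds since $\phi_0\equiv 0\le f$. For the step, assume $\phi_{k-1}\le f$ on $[0,t_0]$; since $(T_\xi g)(t)$ for $t\in[0,t_0]$ depends only on the values of $g$ on $[0,t]$, and $T_\xi$ is order-preserving (the remark after \eqref{eq::txi-def}), we get $\phi_k(t)=(T_\xi\phi_{k-1})(t)\le (T_\xi f)(t)\le f(t)$ for all $t\in[0,t_0]$. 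Letting $k\to\infty$ and using $\phi_k\uparrow\phi$ pointwise (established in the proof of Lemma~\ref{lem::fixpoint}) gives $\phi\le f$ on $[0,t_0]$. As $f\not\equiv 1$ and $f\le 1$, there is $t^*\in[0,t_0]$ with $\phi(t^*)\le f(t^*)<1$, i.e.\ $\Pv(D(t^*)=\infty)>0$ (in the degenerate case $t^*=0$ one uses that $D(\cdot)$ is non-decreasing to get the same conclusion for every $t>0$); hence $\phi\not\equiv 1$ and the process is explosive by Lemma~\ref{lem::fixpoint}.

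The only thing to watch is the bookkeeping caused by $f$ and the iterates $\phi_k$ living on the bounded interval $[0,t_0]$ rather than on all of $\R^+$: one must record that $T_\xi$ restricts sensibly there because $(T_\xi f)(t)$ never queries $f$ beyond $t$, and that order-preservation and the limit identity $\phi_k\uparrow\phi$ survive this restriction. Beyond this there is no genuine analytic obstacle — the crux is the one-line monotone induction $\phi_k\le f$, which simply propagates the supersolution inequality $f\ge T_\xi f$ through every generation.
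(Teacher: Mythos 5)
Your proof is correct, and the \emph{if} direction takes a genuinely different route from the paper. You reuse the \emph{upward} iteration $\phi_0\equiv 0$, $\phi_k=T_\xi\phi_{k-1}\uparrow\phi$ from Lemma~\ref{lem::fixpoint} and propagate the supersolution inequality $f\ge T_\xi f$ into the one-line induction $\phi_k\le f$ on $[0,t_0]$, concluding $\phi\le f$ there and hence $\phi\not\equiv 1$. The paper instead runs a \emph{downward} iteration: it extends $f$ to $\wit\phi_0$ on all of $\R^+$ by setting it $\equiv 1$ on $(t_0,\infty)$, notes $\wit\phi_0\ge T_\xi\wit\phi_0$ so that $\wit\phi_k:=T_\xi^k\wit\phi_0$ is non-increasing in $k$, and passes to the limit, which by bounded convergence is a fixed point $\not\equiv 1$, giving explosivity via Lemma~\ref{lem::fixpoint}. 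Both are monotone sandwich arguments and both are valid; yours is slightly leaner in that it needs neither the extension of $f$ to $\R^+$ nor a bounded-convergence step to certify the limit as a fixed point, and it yields the marginally stronger fact that $\phi$ is minimal among \emph{supersolutions} on $[0,t_0]$, not merely among fixed points --- which is precisely the inequality that the proof of Claim~\ref{cl::opdom} later extracts by re-running the paper's downward iteration from $\wit\phi_0:=\phi_\xi$. The paper's construction, in exchange, produces an explicit nontrivial fixed point lying below $f$, which is a concrete object and makes that reuse in Claim~\ref{cl::opdom} a literal invocation of the same recipe. Your care over the restriction step (that $(T_\xi g)(t)$ only reads $g|_{[0,t]}$, so the iteration is well-posed on $[0,t_0]$) and over the degenerate $t^*=0$ case are both appropriate; the \emph{only if} direction is essentially the paper's, just made precise by an explicit choice of $t_0$.
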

\begin{remark}\label{rem::qu-xi}\normalfont Sometimes it will be easier to analyse the following operator:
\be\label{eq::qu-xi} \big(Q_\xi f\big)(t):= 1-\big(T_\xi (1-f)\big) (t)\ee
Note that if $\phi(t)$ solves \eqref{eq::fixpoint} then $\eta(t):=1-\phi(t)$ solves $\eta(t)=\big(Q_\xi \eta\big)(t)$. As a result, Claim \ref{cl::testfunction} can be rephrased using the operator $Q_\xi$: The branching process with reproduction function $\xi$ is explosive if and only if there exists $t_0>0$ and a function $f:[0,t_0] \to [0,1]$ with $f \not\equiv 0$ and $f(t) \le \big(Q_\xi f\big)(t)$ for all $t\in [0, t_0]$.
\end{remark}
\begin{claim}[Operator domination implies stochastic domination of explosion times]\label{cl::opdom}
Let $\xi$ \\and $\xi'$ be two reproduction functions with corresponding operators $T_\xi$ and $T_{\xi'}$. Suppose there exists a $t_0>0$ such that for every $t \in [0,t_0]$, for every non-increasing $f: [0,t_0]\to  [0,1]$ 
\be\label{eq::opdom} \big(T_\xi f\big) (t) \ge \big(T_{\xi'} f\big) (t)\ee
holds, then the explosivity of $BP_\xi$ implies the explosivity of $BP_{\xi'}$ and further, the explosion time $V_{\xi'}$ of $BP_{\xi'}$ is stochastically dominated by the explosion time $V_{\xi}$ of $BP_\xi$, that is, $V_{\xi'}\  {\buildrel d \over \le}\ V_\xi$.  
\end{claim}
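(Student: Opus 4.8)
The plan is to translate the statement into a pointwise comparison of the two functions $\phi_\xi$ and $\phi_{\xi'}$ from \eqref{eq::dist-explosion} (so that $\phi_\bullet(t)=\Pv(V_\bullet>t)$), exploiting that the domination hypothesis \eqref{eq::opdom} is imposed precisely on the class of non-increasing functions, which by Claim \ref{cl::noninc} is exactly the class containing $\phi_\xi$ itself. First recall from Lemma \ref{lem::fixpoint} that $\phi_\bullet$ is the smallest element of $\Omega_{[0,1]}$ fixed by $T_\bullet$, obtained as the pointwise increasing limit of the iterates $\phi_k$ defined in \eqref{eq::phi-def}, and that it is non-increasing by Claim \ref{cl::noninc}. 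Assuming $BP_\xi$ is explosive, Lemma \ref{lem::fixpoint} gives $\phi_\xi\not\equiv 1$, and then Claim \ref{cl::fastexp} sharpens this to $\phi_\xi(t)<1$ for \emph{all} $t>0$.

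The key step is to observe that the restriction $f:=\phi_\xi$ to $[0,t_0]$ is a test function for $\xi'$ in the sense of Claim \ref{cl::testfunction}. Indeed $f$ is non-increasing with values in $[0,1]$, so \eqref{eq::opdom} applies to it; moreover for $t\le t_0$ the number $\big(T_{\xi'}\phi_\xi\big)(t)$ only involves the values of $\phi_\xi$ on $[0,t]\subseteq[0,t_0]$, so there is no ambiguity, and we obtain
\[ \phi_\xi(t)=\big(T_\xi\phi_\xi\big)(t)\ \ge\ \big(T_{\xi'}\phi_\xi\big)(t)\qquad\text{for all }t\in[0,t_0].\]
Since also $f\not\equiv 1$ (it is even $<1$ on $(0,t_0]$ by the previous paragraph), Claim \ref{cl::testfunction} applied to $\xi'$ shows that $BP_{\xi'}$ is explosive.

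To get the stochastic ordering it then suffices to prove $\phi_{\xi'}(t)\le\phi_\xi(t)$, at least on $[0,t_0]$. Run the iteration \eqref{eq::phi-def} for $\xi'$ inside $[0,t_0]$ starting from $\phi_0\equiv 0\le\phi_\xi$; using that $T_{\xi'}$ preserves ordering (noted right after \eqref{eq::txi-def}) together with the super-solution property just established, one gets inductively, for $t\in[0,t_0]$, $\phi_k'(t)=\big(T_{\xi'}\phi_{k-1}'\big)(t)\le\big(T_{\xi'}\phi_\xi\big)(t)\le\phi_\xi(t)$, where $\phi_k'$ is the $k$-th iterate for $\xi'$. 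Letting $k\to\infty$ yields $\phi_{\xi'}(t)\le\phi_\xi(t)$ on $[0,t_0]$, hence $\Pv(V_{\xi'}\le t)=1-\phi_{\xi'}(t)\ge 1-\phi_\xi(t)=\Pv(V_\xi\le t)$ for $t\in[0,t_0]$ by \eqref{eq::dist-explosion}. When \eqref{eq::opdom} holds with $t_0=\infty$ — which is the case in all the concrete comparisons in the later sections (shorter birth-times, more offspring, etc.) — the argument above runs on all of $[0,\infty)$ and delivers $V_{\xi'}\le_d V_\xi$ immediately.

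The delicate point is upgrading the ordering from $[0,t_0]$ to all $t$ when $t_0<\infty$: the induction above stalls past $t_0$ because the inequality $\big(T_{\xi'}\phi_\xi\big)(t)\le\phi_\xi(t)$ is no longer guaranteed for $t>t_0$. I expect this to still go through, because whether — and how fast — a branching process explodes is governed by the behaviour of the reproduction process near the origin (an infinite ray of finite total length must use edges tending to $0$), so the constraint \eqref{eq::opdom} on $[0,t_0]$ ought already to pin down the law of $V$; making this quantitative, either by a self-similar/shift argument in the spirit of the proof of Claim \ref{cl::fastexp} (which, applied to $\xi'$, at least propagates $\phi_{\xi'}<1$ beyond $t_0$) or by comparing both processes with their origin-truncated versions, for which \eqref{eq::opdom} does hold globally, is the main obstacle and the only nonroutine ingredient.
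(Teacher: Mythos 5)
Your argument coincides with the paper's proof: take $f=\phi_\xi$ as a test function for $T_{\xi'}$ via Claim~\ref{cl::testfunction}, then iterate $T_{\xi'}$ from $\wit\phi_0 = \phi_\xi \ind_{[0,t_0]} + \ind_{(t_0,\infty)}$ to get $\phi_{\xi'}\le\phi_\xi$ on $[0,t_0]$. So the first three paragraphs reproduce exactly what the paper does.

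What you add, and where you and the paper part ways, is the last paragraph. You correctly observe that the monotone iteration only delivers $\phi_{\xi'}\le\phi_\xi$ on $[0,t_0]$ and that nothing in the hypothesis controls the operators beyond $t_0$. The paper glosses over this and asserts the inequality globally. But your expectation that the gap ``ought to go through'' is wrong: the global stochastic domination is \emph{false} under the stated hypothesis. Here is a counterexample. Let $\zeta$ be an explosive age-dependent reproduction process supported in $[0,t_0/4]$ with offspring distribution $X$ satisfying $\Pv(X=0)>0$, and set $\xi' := \zeta$ and $\xi := \zeta + \delta_{t_0+1}$ (each individual gets one extra child at time $t_0+1$). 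The two processes agree on $[0,t_0]$, so \eqref{eq::opdom} holds with equality there, and both explode. However, $BP_\xi$ has offspring $X+1\ge 1$ and thus never dies out, so $\Pv(V_\xi=\infty)=0$ by Claim~\ref{cl::nocons}, whereas $BP_{\xi'}$ dies out (hence $V_{\xi'}=\infty$) with the positive extinction probability of the $X$-Galton--Watson tree. Thus $\Pv(V_{\xi'}>t) > \Pv(V_\xi>t)$ for large $t$, refuting $V_{\xi'}\le_d V_\xi$. The reason your heuristic fails is that $\Pv(V=\infty)$ equals the extinction probability, which is governed by the \emph{total} offspring $|\xi(\infty)|$, not by the reproduction process near the origin; your ``shift'' idea from Claim~\ref{cl::fastexp} propagates $\phi_{\xi'}<1$ but cannot control $\phi_{\xi'}$ at $\infty$, and replacing $\xi$ by an origin-truncated version changes the extinction probability and hence the law of $V_\xi$. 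What is actually provable (and what your argument and the paper's really establish, and what the downstream applications use) is the comparison $\Pv(V_{\xi'}\le t)\ge\Pv(V_\xi\le t)$ for $t\in[0,t_0]$ together with the explosivity implication; the claim as stated, and the paper's proof, overreach on the global ordering.
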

\begin{proof}[Proof of Claim \ref{cl::testfunction}]If the BP is explosive, then $\phi(t)=\Pv(D(t)<\infty)<1$ for $t>0$ and it satisfies $\phi(t)=T_\xi \phi(t)$, see \eqref{eq::phi-2} and \eqref{eq::fixpoint}, hence, necessity follows. The existence of such an $f$ is sufficient, since then define 
\[ \wit\phi_0(t):= f(t)\ind_{t\in [0, t_0]} + \ind_{t>t_0}. \]
and then set recursively $\wit\phi_k(t):=\big(T_\xi \wit\phi_{k-1}\big) (t)$. Then, $ \wit\phi_0(t)\ge \big(T_\xi \wit \phi_0\big) (t)=\wit \phi_1(t)$ for $t\in [0,t_0]$ by \eqref{eq::test} and trivially for $t>t_0$, since $1=\wit \phi_0(t)$ and $T_\xi$ maps $\Omega_{[0,1]}$ into intself. As a result, we see that 
$\wit \phi_0(t)\ge \wit \phi_1(t)$ for all $t\ge 0$. Apply $T_\xi$ on both sides to obtain $\wit \phi_k \ge \wit \phi_{k+1}$ for all $k$ and $t>0$, since $T_\xi$ preserves ordering (see below \eqref{eq::txi-def}). We obtained  $\wit\phi_k(t)\not \equiv 1$, a sequence of functions non-increasing in $k$ with values in $[0,1]$, so 
\[ \wit\phi(t):=\lim_{k\to \infty} \wit \phi_k(t)\] 
exists and is strictly less than $1$ for some $t>0$. Further, by bounded convergence it satisfies \eqref{eq::fixpoint}. Hence, there is a nontrivial fixed point function that solves \eqref{eq::fixpoint} and so the process is explosive.
\end{proof}
\begin{proof}[Proof of Claim \ref{cl::opdom}]
In this proof, every quantity gets a subscript $\xi$ or $\xi'$ to indicate to which BP the quantity belongs to.
Suppose the domination of the operators holds and $BP_\xi$ is explosive. Lemma \ref{lem::fixpoint} implies that $\phi_\xi(t):=\Pv(D_\xi(t) <\infty)\not \equiv 1$, a non-increasing function solves the fixed point equation \eqref{eq::fixpoint}. As a result,
\[ \phi_\xi(t) = \big(T_\xi \phi_\xi \big) (t) \ge \big(T_{\xi'} \phi_\xi \big) (t). \]
This means that $\phi_\xi(t)$ serves as a proper test function for the operator $T_{\xi'}$, and Claim \ref{cl::testfunction} implies that $BP_{\xi'}$ is also explosive. Following the \emph{proof} of Claim \ref{cl::testfunction}, setting $\wit \phi_0(t):=\phi_\xi(t)$ we obtain that $\phi_\xi(t) \ge \phi_{\xi'}(t)$, the smallest fixed point function of the operator $T_{\xi'}$. Recall that $1-\phi(t)$ is the distribution function of the explosion time, see \eqref{eq::dist-explosion}. So,
\[ \phi_\xi(t)= \Pv(D_\xi(t) <\infty) =\Pv(V_\xi > t ) \ge \phi_{\xi'}(t) = \Pv(V_{\xi'} > t ) \]
and the stochastic domination follows.
\end{proof}
\section{Analysis of general reproduction processes}\label{s::3}
In this section, we investigate  general reproduction processes. First we study what happens if the expected reproduction measure has a mass at $0$, as well as it is finite in some neighborhood of the origin.  Then, we develop a comparison of point processes that we call \emph{stochastic domination around the origin} as well as \emph{coupling around the origin}. We show that this domination implies operator domination in the sense of Claim \ref{cl::opdom}. This new notion of coupling enables us to study and compare the behavior of the classical examples with different reproduction functions below in Section \ref{s::classic}. 

\subsection{Mass of $\xi$ at zero.}
In what follows, we investigate what happens if $\xi$ has a positive mass at $0$, i.e., $\Pv(\xi(0)=0)\neq 1$. 
The analysis of this problem for age-dependent branching processes, (Example \ref{ex::agedep}) was carried out first by Sevast'anov \cite[Theorems 7, 8, 9]{Sev67}. Then, for discrete birth-times, it was re-discovered for branching random walks by Dekking and Host \cite{DeHo91}. A good summary is given in the introduction of \cite{AmiDev13}.
Here we provide statements for the general case, i.e., not just for age-dependent BPs but for general reproduction functions. 
The next theorem reduces the question to study reproduction functions that have no mass at $0$ but the expected number of points in any compact interval $[0, K]$ is infinite.  

\begin{theorem}\label{thm::mass-zero}
Consider a branching process with reproduction function $\xi$. Then, 
\begin{enumerate}
\item The process is explosive if $\Ev[|\xi(0)|]>1$, (including also the case $\Ev[|\xi(0)|]=\infty$). 
\item The process is conservative if $\Ev[|\xi(0)|]<1$ and there exists a finite $t_0>0$ such that 
\be\label{eq::finite-measure} \Ev\left[  |\xi(t_0)|\right]<\infty,\ee
i.e., $\xi$ has a finite intensity measure in some open neighborhood of the origin. 
\item If $\Ev[|\xi(0)|]=1$ and 
\begin{enumerate}
\item[(i)] $\Pv(|\xi(0)|=0)=0$, then the process is explosive. 
\item[(ii)]  $\Pv(|\xi(0)|=0)>0$, and $\Ev[|\xi((0,t_0])|]=0$ for some $t_0>0$, then the process is conservative. 
\item[(iii)]  $\Pv(|\xi(0)|=0)>0$, and there exists a finite $t_0>0$ such that  for all $0<t<t_0$,
\be\label{eq::finite-measure} 0<\Ev\left[  |\xi(t_0)|\right]<\infty,\ee
i.e., $\xi$ has a finite but positive intensity measure in some open neighborhood of the origin, then the question whether the process is explosive or conservative can be reduced to study a related process $\xi'$ with $\Ev[|\xi'(0)|]=0$ and $\Ev[|\xi'(t)|]=\infty$ for all $t>0$.
 \end{enumerate}
 \end{enumerate}
\end{theorem}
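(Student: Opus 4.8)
The plan is to handle the five cases separately, built around one structural observation: the individuals born at the \emph{same instant} as a given individual form an ordinary Galton--Watson (GW) tree with offspring law $|\xi(0)|$, and only the positive-delay edges can carry explosion. I would bundle (1) and (3)(i): let $Z=(Z_k)_{k\ge0}$ count, generation by generation, the individuals born exactly at time $0$; since the counts $|\xi_x(0)|$ are i.i.d., $Z$ is a GW process with offspring law $|\xi(0)|$. If $\E[|\xi(0)|]>1$ (including $\infty$) then $\lim_{s\uparrow1}(1-\E[s^{|\xi(0)|}])/(1-s)=\E[|\xi(0)|]>1$, so the extinction probability $q$ of $Z$ is $<1$; on the survival event $D(0)=\infty$, hence $\P(D(t)=\infty)\ge1-q>0$ for all $t>0$. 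If $\E[|\xi(0)|]=1$ and $\P(|\xi(0)|=0)=0$, then $|\xi(0)|\ge1$ a.s.\ forces $|\xi(0)|=1$ a.s., so $Z$ is an infinite deterministic path and $D(0)=\infty$ a.s.

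For (2) I would use first moments. Put $\mu(t):=\E[|\xi(t)|]$, non-decreasing in $t$. Finiteness of $\E[|\xi(t_0)|]$ forces a.s.\ finitely many points of $\xi$ in $[0,t_0]$, whence $|\xi(t)|\downarrow|\xi(0)|$ as $t\downarrow0$ and, dominating by $|\xi(t_0)|$, $\mu(t)\downarrow\E[|\xi(0)|]<1$; fix $t_1\in(0,t_0]$ with $\mu(t_1)<1$. Conditioning on the first generation and using monotonicity of $t\mapsto\E[|G_n\cap\CD(t)|]$ gives $\E[|G_{n+1}\cap\CD(t)|]\le\mu(t)\,\E[|G_n\cap\CD(t)|]$, hence $\E[D(t_1)]\le\sum_{n\ge0}\mu(t_1)^n<\infty$ and $D(t_1)<\infty$ a.s. Since $\phi=\P(D(\cdot)<\infty)$ is the smallest solution of \eqref{eq::fixpoint} (Lemma \ref{lem::fixpoint}) and $\phi(t_1)=1$, Claim \ref{cl::fastexp} forces $\phi\equiv1$: the process is conservative.

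For (3)(ii) I would argue by rays. As $\E[|\xi((0,t_0])|]=0$, a.s.\ every one of the countably many reproduction processes $\xi_x$, $x\in G$, has all its points in $\{0\}\cup(t_0,\infty]$, and a.s.\ every same-instant cluster is finite (a critical, non-degenerate GW tree, since $\P(|\xi(0)|=0)>0$). If $V\le t<\infty$, Lemma \ref{lemma::rays} produces an infinite ray with $\sum_i\sigma_{x_{|i}}\le t$; only finitely many of its edges can exceed $t_0$, so past some index the ray stays inside one same-instant cluster --- impossible. Hence $\P(V<\infty)=0$, conservative. For (3)(iii) I would carry out the announced reduction: contract each (a.s.\ finite) same-instant cluster $C$ to a single point, with reproduction process $\xi':=\sum_{z\in C}\xi_z|_{(0,\infty)}$ --- the positive-delay children of all members of $C$, with their birth-times, which relative to the cluster-root coincide with their birth-times relative to their true parent since all of $C$ is born at once. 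These $\xi'$ are i.i.d.\ over clusters (distinct clusters use disjoint word-sets). Within-cluster edges have length $0$, so a ray of individuals and the ray of clusters it induces have equal total length, and --- clusters being a.s.\ finite --- the original BP explodes iff the $\xi'$-BP does. Finally $\E[|\xi'(0)|]=0$, and by a Wald-type identity (membership of a word in its cluster depends only on the time-$0$ parts of its strict ancestors, hence is independent of its own reproduction) $\E[|\xi'(t)|]=\E[|C|]\,\E[|\xi((0,t])|]=\infty$ for every $t>0$, using positivity of the intensity near $0$ and $\E[|C|]=\sum_{k\ge0}\E[Z_k]=\sum_{k\ge0}1=\infty$ for critical GW.

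The hard part will be (3)(iii): making the contraction rigorous --- that $\xi'$ is a genuine i.i.d.\ reproduction process and, above all, that the explosion events of the two processes truly coincide. The subtlety is exactly that critical GW clusters have \emph{infinite} mean size, so no first-moment control is available there; one has to rely on a.s.\ finiteness of each individual cluster together with the countability of the word set $G$ to rule out an infinite ray ever being absorbed into a single cluster.
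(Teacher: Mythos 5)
Your argument is correct, and in parts (1), (3)(i), (3)(ii) and (3)(iii) it follows essentially the same line as the paper's proof: part (1) via the zero-delay GW tree with offspring law $|\xi(0)|$ (the paper frames the same observation through the fixed-point equation $\phi(0)=\E[\phi(0)^{|\xi(0)|}]$ and then records the GW interpretation as a remark); (3)(i) via the forced degenerate $|\xi(0)|=1$ a.s.; (3)(ii) via a.s.\ finiteness of the critical zero-delay clusters together with the ray characterisation of Lemma~\ref{lemma::rays}; and (3)(iii) via the same cluster-contraction, with Wald's identity giving $\E[|\xi'(0)|]=0$ and $\E[|\xi'(t)|]=\E[|C|]\,\E[|\xi((0,t])|]=\infty$. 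You are right to single out (3)(iii) as the delicate spot; the paper leaves the equivalence of the explosion events of the original and contracted processes at essentially the same level of informality, so the details you flag (a.s.\ finiteness of each cluster, countability of $G$, and the fact that an infinite individual-ray induces an infinite cluster-ray of identical total length) would in fact tighten the argument rather than merely reproduce it.

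Part (2) is where you take a genuinely different route. The paper rounds all delays in $[0,\ve]$ down to $0$, producing a process $\xi'$ that dominates $\xi$ around the origin in the sense of Definition~\ref{def::point-proc-dom}, and then argues either probabilistically (the zero-delay clusters of $\xi'$ are now \emph{subcritical} GW trees, hence a.s.\ finite, so every infinite ray carries infinitely many delays $\ge\ve$) or analytically (the iterates of $T_{\xi'}$ contract on $[0,\ve)$, so $1$ is the unique fixed point there, and Claim~\ref{cl::fastexp} extends this to all $t$); Claim~\ref{cl::opdom} then transfers conservativeness back to $\xi$. You instead work directly with the first moment: the Wald-type bound
\[
\E\bigl[\,|G_{n+1}\cap\CD(t)|\,\bigr]=\E\Bigl[\textstyle\sum_{y\in G_n\cap\CD(t)}|\xi_y(t-\tau_y)|\Bigr]\le\mu(t)\,\E\bigl[\,|G_n\cap\CD(t)|\,\bigr],
\]
valid because $\ind_{y\in G_n\cap\CD(t)}$ depends only on the reproduction of the strict ancestors of $y$, gives $\E[D(t_1)]\le(1-\mu(t_1))^{-1}<\infty$ once $t_1$ is chosen with $\mu(t_1)<1$, and then Claim~\ref{cl::fastexp} upgrades $\phi(t_1)=1$ to $\phi\equiv1$. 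Your version is shorter and avoids the detour through the modified process; the paper's probabilistic variant buys a ray-structure picture that it reuses verbatim in (3)(ii), which is presumably why the authors chose it. Both are sound.
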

\begin{remark}\normalfont
The main message of this theorem is that only those processes might be explosive that either satisfy the criterions of part (c) (i) or (iii) or have $\Ev[|\xi(t)|]=\infty$ for all $t>0$.
\end{remark}
An immediate corollary is the following.
\begin{corollary} Branching processes with finite expected intensity measure can never be explosive. In particular, no Poissonian process can be explosive with locally finite intensity measure.
\end{corollary}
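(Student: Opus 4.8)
The corollary is read off directly from Theorem~\ref{thm::mass-zero}. By a \emph{finite expected intensity measure} we understand that the deterministic measure $B\mapsto\Ev[\xi(B)]$ is locally finite, i.e.\ $\Ev[\,|\xi(t)|\,]<\infty$ for every finite $t>0$; in particular the atom of $\xi$ at the origin has finite expected mass, $\Ev[\,|\xi(0)|\,]<\infty$. The plan is simply to feed this into the conservativity clause~(b) of Theorem~\ref{thm::mass-zero}.

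First I would record that we are in the regime $\Ev[\,|\xi(0)|\,]<1$. (The alternative $\Ev[\,|\xi(0)|\,]\ge 1$ is the degenerate instantaneous case in which the descendants born at the single time $0$ already form a Galton--Watson process with mean offspring at least one, which stacks up infinitely many individuals at time $0$; this is exactly clauses~(a) and~(c)(i) of Theorem~\ref{thm::mass-zero}, and it never occurs once $\xi$ carries no atom at $0$, so that $\Ev[\,|\xi(0)|\,]=0$ --- the case of interest, and in particular the Poissonian one.) Given $\Ev[\,|\xi(0)|\,]<1$, choose any finite $t_0>0$; by hypothesis $\Ev[\,|\xi(t_0)|\,]<\infty$, so clause~(b) of Theorem~\ref{thm::mass-zero} applies and yields $\Pv(D(t)=\infty)=0$ for every $t>0$, i.e.\ the process is conservative.

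For the Poissonian statement, let $\xi$ be a Poisson point process with locally finite, atomless intensity measure $\Lambda$. Then $\xi$ has no fixed atoms, hence $\Ev[\,|\xi(0)|\,]=\Lambda(\{0\})=0<1$, while $\Ev[\,|\xi(t_0)|\,]=\Lambda([0,t_0])<\infty$ for every $t_0<\infty$; clause~(b) of Theorem~\ref{thm::mass-zero} again applies, and the CMJ branching process driven by $\xi$ is conservative.

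I do not expect any real obstacle, since the entire content is already contained in Theorem~\ref{thm::mass-zero}(b). The only point worth flagging is that a finite intensity measure alone is genuinely \emph{not} enough to preclude explosion: in the boundary regime $\Ev[\,|\xi(0)|\,]\ge 1$ (clauses~(a) and~(c)(i)) the process does explode. This is precisely the degenerate instantaneous Galton--Watson mechanism described above, which disappears for atomless reproduction processes, and hence is irrelevant both to the corollary as it is intended and, literally, to the Poissonian half of the statement.
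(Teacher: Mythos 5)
Your proof is correct and takes exactly the route the paper intends: the corollary is read off from Theorem~\ref{thm::mass-zero}(b), which the paper presents as an immediate consequence without further argument. Your explicit observation that the tacit hypothesis $\Ev[\,|\xi(0)|\,]<1$ (which holds automatically for an atomless, in particular Poissonian, reproduction process, since then $\Ev[\,|\xi(0)|\,]=0$) is genuinely needed --- otherwise clauses~(a) and~(c)(i) of the theorem produce explosion even though the intensity measure is finite --- is a welcome clarification that the paper leaves unstated.
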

Inhomogeneous Poisson point processes as reproduction functions that do not have Malthusian parameter are always conservative if the intensity measure is locally finite: for instance, an inhomogeneous Poisson process with intensity measure $\mu(t)=\exp\{x^\alpha\}$ for any $\alpha>1$, or any function that grows faster than exponential to infinity, is conservative. On the other hand, if e.g. $\mu(t)=1/(1-x)$ on $[0,1)$ then the process explodes `sideways', i.e., already a single individual produces infinitely many offspring in finite time.

We further remark that condition \eqref{eq::finite-measure} is strictly weaker than having \emph{finite Malthusian parameter}, i.e., the existence of a $\la>0$ such that 
\be\label{eq::mal-def} \Ev\left[\int_0^\infty \e^{-\la x} \xi(\mathrm dx)\right]=1.\ee
Indeed, consider \[ \xi:=X_1 \delta_{0} + X_2\delta_{1} +\sum_{i=1}^{X_3} \delta_{\sigma_i}\] for a triple of (not necessarily independent) random variables $(X_1, X_2, X_3)$ with $\Ev[X_1]<1, \Ev[X_3]<\infty$ and $X_2<\infty$ almost surely, but $\Ev[X_2]=\infty$ and $\sigma_i$ i.i.d. from an arbitrary distribution with $F_\sigma(0)=0$. That is, an individual gives immediately birth to $X_1$ many children, she has an additional $X_2$ many children at time $1$, and the rest of her total number of children has finite expectation and zero mass at $0$.  In this case, since $\Ev[X_2]=\infty$, no Malthusian parameter exists, yet the process is conservative since it satisfies the criterions in part (b) of Theorem \ref{thm::mass-zero}. 

\begin{proof}[Proof of Theorem \ref{thm::mass-zero}]
\emph{Part (a).} When $\Ev[|\xi(0)|]>1$, consider the fixed point equation \eqref{eq::fixpoint} at $0$:
\be\label{eq::phi0} \phi(0) = \Ev\left[ \exp\{ \log(\phi(0)) |\xi(0)|\}\right] = \Ev\left[ \phi(0)^{|\xi(0)|}\right].\ee
Note that this equation describes the extinction probability of a usual Galton-Watson branching process with offspring distribution $|\xi(0)|$. Since $\Ev[|\xi(0)|]>1$, this BP  is supercritical, and hence equation \eqref{eq::phi0} has a root in $[0,1)$. As a result, $\phi(0)<1$ and hence the process is explosive. 
We remark that this means that in the original BP, the subprocess where we consider only those individuals that are born $0$ time after their mother, i.e., \emph{all} birth-times are $0$, is supercritical and hence it contains an infinite cluster with positive probability.

\emph{Part (b).}The conditions guarantee the existence of an $\ve>0$ with $\Ev[|\xi(\ve)|]<1$. So, we can modify $\xi$ so that 
 every individual born in $[0,\ve]$ is born at $0$ instead, i.e., 
 \[ \xi':=\delta_0 \cdot |\xi(\ve)| + \xi[\ve, \infty].\]
 Since every birth-time in the process $\xi'$ is shorter or equal to the birth-times in process $\xi$, we have $T_{\xi'} \le T_\xi$, by Claim \ref{cl::opdom} and 
so if $\xi'$ is conservative, then $\xi$ is also conservative.
 
 Consider now the sub-process of $\xi'$ with only zero birth-times. 
 This process is subcritical, since $\Ev[|\xi'(0)|]<1$. As a result, each cluster of zero-birth individuals is finite almost surely. This implies that in the BP with reproduction function $\xi'$, any ray to infinity must intersect infinitely many non-zero birth-times, and each of these have at least length $\ve$. Hence, the total length of any ray is infinite.
 
 Another, more analytic argument is the following. 
 Even though $T_{\xi'}$ itself might not be a contraction on $[0, \ve)$, its second and more iterates do contract. Indeed, for any $t\in [0,\ve)$,
 \[ \big(T_{\xi'} f\big)(t) - \big(T_{\xi'} g\big)(t)= \Ev\left[ f(0)^{|\xi(\ve)|}- g(0)^{|\xi(\ve)|}\right]\le \Ev[|\xi(\ve)|] |f(0)-g(0)|.\]
 Since $\Ev[|\xi(\ve)|]<1$, the further iterates of $T_{\xi'}$ contract. As a result, $T_{\xi'}$ has a unique fixpoint on $[0, \ve)$: the constant $1$ function. Then, by Claim \ref{cl::fastexp}, this is the only fixpoint on $[0, \infty)$ as well and hence $BP_{\xi'}$ is conservative. Claim \ref{cl::opdom} implies then that $BP_\xi$ is also conservative. 
 
%
\emph{Part (c).} \emph{Case (i)}: In this case, every individual has precisely one child that is born at the same time as her. Call this child the first child. Then, there is an infinite ray with $0$ total length, namely, $x=11111\dots$ has $\tau_x=0$. Hence the process explodes, see Lemma \ref{lemma::rays}.

\emph{Case (ii)}: In this case the argument used in Part (b) can be repeated: now, the clusters of zero-birth time individuals form critical Galton-Watson processes and as a result they are finite almost surely. Hence, there must be infinitely many non-zero birth-times on any infinite ray, and these have at least length $t_0$. Hence, the process is conservative. 

\emph{Case (iii)}:
Let us do the following iterative procedure: consider all individuals that are born at the same time as their mother, i.e., $\tau_x= \tau_{x_{|-1}}$ and collapse these individuals to their mother, that is, the new reproduction function of $x_{|-1}$ is the union of the reproduction processes of all children of her that are born at the same time as her. Do this iteratively until a non-zero birth time individual is reached. In other words, we ``collapse'' each cluster $C$ of zero birth-time individuals into a single individual $v_C$, and all the non-zero birth-time children of all individuals in $C$ will be allocated as children of $v_C$. This means that the new reproduction process of the root is given by
\[ \xi'([0,t]) = \bigcup_{x\in G} \ind_{\tau_x =0} \xi_x((0,t]). \] 
Note that $\sum_{x\in G} \ind_{\tau_x =0}$ is the total progeny of a critical branching process, and hence $\Ev[\sum_{x\in G} \ind_{\tau_x =0}]=\infty$. Since $\Ev[|\xi_x((0,t]|)>0$ and the summands are independent, by Wald's equality, for any $t>0$,
\[ \Ev[ |\xi'([0,t])| ] = \infty, \quad \Ev[ |\xi'(0)|]=0.\]
Hence, we reduced Part (c) (iii) to a reproduction process with zero expected mass at the origin and infinite expected mass elsewhere.
\end{proof}
In what follows we study reproduction processes that belong to the missing class in Theorem \ref{thm::mass-zero}. That is, from now on we assume that $\xi$ is such that for all $t>0$
\be\label{eq::infty-crit} \Ev[ |\xi([0,t])| ] = \infty, \quad \Ev[ |\xi(0)|]=0.\ee

\subsection{Couplings around $0$ and tail coupling.}
In what follows, we develop comparison techniques to be able to compare two reproduction processes $\xi$ and $\xi'$ in terms of explosion.
First we start by defining `partial' couplings of random variables. 
\begin{definition}[Stochastic domination around the origin]\label{def::0-dom}
For two nonnegative random variables $X,Y$ with distribution functions $F_X(t), F_Y(t)$ we write
 \[ X\ { \buildrel d,0 \over \le }\ Y\] and say that $X$ is stochastically dominated by $Y$ around the origin, if there exists a $t_0=t_0(F_X, F_Y)>0$ such that $F_X(t) \ge F_Y(t)$ for all $t\in [0, t_0]$. 
 
 Similarly, we say that $X$ is stochastically dominated by $Y$ around infinity, or $X$ is \emph{tail-dominated} by $Y$, and write
 \[X\ { \buildrel d,\infty \over \le }\ Y, \] if there exist a $K=K(F_X, F_Y)>0$ such that $F_X(t)\ge F_Y(t)$ for all $t>K$.
\end{definition}
\begin{lemma} (1) $X$ is stochastically dominated by $Y$ around the origin if and only if there exists a $t_0>0$ and a coupling $(\wit X, \wit Y)$ of $X$ and $Y$ such that $\min \{\wit X, t_0\} \le \min\{\wit Y, t_0\}$. 

(2) Further, $X$ is tail-dominated by $Y$ if and only if there exists a $K>0$ and a  coupling $(\wit X, \wit Y)$ of $X$ and $Y$ such that $\max\{\wit X, K\} \le \max\{\wit Y, K\}$. Let us call such a coupling \emph{tail coupling}.
\end{lemma}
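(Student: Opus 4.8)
The plan is to prove both equivalences at once through the \emph{quantile coupling}. For the distribution function $F$ of a nonnegative random variable write $F^{-1}(u):=\inf\{t\ge 0:F(t)\ge u\}$ for its generalised inverse; the only facts I will use are the two standard consequences of monotonicity and right-continuity of $F$, namely that $F^{-1}(U)$ has distribution function $F$ when $U$ is uniform on $(0,1)$, and that $F^{-1}(u)\le s\iff F(s)\ge u$ for $s\ge0$ and $u\in(0,1)$. Fixing one uniform $U$ on $(0,1)$, the pair $(\wit X,\wit Y):=(F_X^{-1}(U),F_Y^{-1}(U))$ is then a coupling of $X$ and $Y$, and this is the coupling I will use to realise the min- and max-couplings in the ``only if'' directions.

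The ``if'' directions need no construction; one reads them off pathwise from the coupling inequality. Assume $\min\{\wit X,t_0\}\le\min\{\wit Y,t_0\}$. For $t\in[0,t_0)$, on the event $\{\wit Y\le t\}$ we have $\min\{\wit Y,t_0\}=\wit Y\le t<t_0$, hence $\min\{\wit X,t_0\}\le t<t_0$, which forces $\wit X\le t$; thus $\{\wit Y\le t\}\subseteq\{\wit X\le t\}$, so $F_X(t)\ge F_Y(t)$ on $[0,t_0)$, i.e.\ $X\stackrel{d,0}{\le}Y$ (with, say, $t_0/2$ as the witness of Definition \ref{def::0-dom}). Replacing $\min$ by $\max$ and $t_0$ by $K$ gives the second case: if $\max\{\wit X,K\}\le\max\{\wit Y,K\}$, then for $t>K$, on $\{\wit Y\le t\}$ we get $\max\{\wit Y,K\}\le t$, hence $\max\{\wit X,K\}\le t$, hence $\wit X\le t$, so $F_X(t)\ge F_Y(t)$ for all $t>K$.

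For the ``only if'' directions I plug the hypothesis into the quantile coupling and check the required inequality by a two-line case distinction. For (1): if $\wit Y\ge t_0$ then $\min\{\wit Y,t_0\}=t_0\ge\min\{\wit X,t_0\}$; if $\wit Y=s<t_0$ then $F_Y(s)\ge U$, and since $s\in[0,t_0]$ the hypothesis yields $F_X(s)\ge F_Y(s)\ge U$, so $\wit X=F_X^{-1}(U)\le s$ and $\min\{\wit X,t_0\}\le s=\min\{\wit Y,t_0\}$. For (2), the case $\wit Y>K$ is symmetric: with $s:=\wit Y>K$ one has $F_X(s)\ge F_Y(s)\ge U$ because $s>K$, so $\wit X\le s$ and $\max\{\wit X,K\}\le s=\max\{\wit Y,K\}$.

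The only step I expect to cost any effort is the remaining case $\wit Y\le K$ in the ``only if'' direction of (2): there the target inequality reads $\max\{\wit X,K\}\le K$, i.e.\ $\wit X\le K$, but the hypothesis says nothing about $F_X$ versus $F_Y$ below $K$, so this is not automatic. The key observation is that $F_Y^{-1}(U)\le K$ nonetheless forces $F_X^{-1}(U)\le K$: if $F_X^{-1}(U)>K$, choose $t\in(K,F_X^{-1}(U))$; then $F_X(t)<U$ by definition of $F_X^{-1}$, while $F_Y(t)\ge F_Y(F_Y^{-1}(U))\ge U$ because $t>K\ge F_Y^{-1}(U)$, contradicting $F_X(t)\ge F_Y(t)$ on $(K,\infty)$. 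With this in hand the argument is complete; as an alternative one could instead apply Strassen's theorem to the truncated pair $(\min\{X,t_0\},\min\{Y,t_0\})$, respectively the floored pair $(\max\{X,K\},\max\{Y,K\})$, and lift the resulting a.s.\ monotone coupling back to $(X,Y)$ by conditional resampling, but the quantile coupling sidesteps that lifting.
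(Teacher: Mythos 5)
Your proof is correct and uses the same quantile-coupling strategy as the paper. One thing worth flagging: the paper dismisses Part (2) as ``analogous,'' but as you rightly observe it is not literally so --- the case $\wit Y\le K$ has no counterpart in Part (1), since tail domination says nothing about the distribution functions below $K$, and your extra argument (that $F_Y^{-1}(U)\le K$ forces $F_X^{-1}(U)\le K$, by picking a witness $t\in(K,F_X^{-1}(U))$ and deriving a contradiction) is exactly what is needed to close that gap. You also use the left-continuous inverse $\inf\{t:F(t)\ge u\}$ rather than the paper's $\sup\{t:F(t)\le s\}$; the former is the safer choice since it is guaranteed to produce the correct marginals when $F$ has atoms.
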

\begin{proof} Part (1). Suppose there is such a coupling. Then for all $t<t_0$, 
\[ F_Y(t)=\Pv(\wit Y\le t) = \Pv(\wit X \le \wit Y \le t) \le \Pv(\wit X \le t)=F_X(t).\] For the other way round, let $U\sim U[0,1]$ be a uniform random variable and let us define the generalised inverse of a function as $F^{-1}(s):=\sup\{ t: F(t) \le s\}$. Then the coupling $\wit X, \wit Y= F_X^{-1}(U), F_Y^{-1}(U)$ achieves such a coupling. Indeed, since $F_X(t)\le F_Y(t)$ for all $t<t_0$, we have $F^{-1}_X(s)\le F^{-1}_Y(s)$ for all $s\in [0, F_X(t_0)]$. It is straightforward to check that the marginal distributions are what they should be. 

The proof of Part (2) is analogous and is left to the reader.
\end{proof}
There is another way to formulate coupling at the origin of two random variables. Consider $X,Y$ as unit masses at locations $X, Y$, and define two point processes $\xi_X, \xi_Y$, each of which has a unit mass at a single point, namely at $X$ and at $Y$, respectively. With this notation, $X\ { \buildrel d,0 \over \le }\ Y$ if and only if there exists a coupling  that satisfies 
\[ |\xi_X(t)| \ge |\xi_Y(t)| \]
for all $t\le t_0$.
We arrived to the crucial tool to compare reproduction processes.
\begin{definition}[Stochastic domination of point processes around the origin]\label{def::point-proc-dom}
Let $\xi$ and $\xi'$ be two point processes. We say that $\xi$ is stochastically dominated by $\xi'$ around the origin and write $\xi\ { \buildrel d,0 \over \le }\ \xi'$ if there exists a $t_0>0$ and a coupling of $\xi, \xi'$, denoted by $(\wit \xi, \wit \xi')$ such that
\be\label{eq::partial-coupling} |\wit\xi (t) |\ge |\wit\xi'(t) |\ee
holds for all $t\in[0, t_0]$ with probability $1$ under the coupling.
\end{definition}
The advantage of this definition is that it makes sense also for processes $\xi$ that have $\Ev[|\xi(t)|]=\infty$ for all $t>0$. 
\begin{theorem}\label{thm::comparison-general} Let $\xi, \xi'$ be two reproduction processes of two branching processes $BP_\xi$ and $BP_{\xi'}$, respectively. If $\xi'\ { \buildrel d,0 \over \le }\ \xi$ and $BP_{\xi}$ is explosive, then so is $BP_{\xi'}$. Further, the explosion time $V_{\xi'}$ of the process with $\xi'$ is stochastically dominated by the explosion time $V_\xi$ of the process with $\xi$, that is, $V_{\xi'}\  {\buildrel d \over \le }\ V_\xi$. If $BP_{\xi'}$ is conservative, then so is $BP_{\xi}$.
\end{theorem}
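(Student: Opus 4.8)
The plan is to deduce the entire statement from the operator-domination criterion, Claim~\ref{cl::opdom}. Thus the only real content to establish is: if $\xi'\ {\buildrel d,0\over\le}\ \xi$, then there is a $t_0>0$ such that for every non-increasing $f\colon[0,t_0]\to[0,1]$ and every $t\in[0,t_0]$,
\[
 (T_\xi f)(t)\ \ge\ (T_{\xi'} f)(t).
\]
Granting this, Claim~\ref{cl::opdom} yields at once that explosivity of $BP_\xi$ implies explosivity of $BP_{\xi'}$ together with $V_{\xi'}\ {\buildrel d\over\le}\ V_\xi$, and the last assertion of the theorem (if $BP_{\xi'}$ is conservative then so is $BP_\xi$) is just the contrapositive of the first. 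To set things up I would take $t_0>0$ and a coupling of $\xi,\xi'$, which I denote $(\wit\xi,\wit\xi')$, furnished by the definition of $\xi'\ {\buildrel d,0\over\le}\ \xi$, so that almost surely $|\wit\xi'(t)|\ge|\wit\xi(t)|$ for all $t\in[0,t_0]$; this $t_0$ is the one that will appear in the conclusion, and locality of Claim~\ref{cl::opdom} means it is enough to have the operator inequality on $[0,t_0]$. By the standing assumption $|\xi(t)|<\infty$ a.s.\ (and likewise for $\xi'$), $\wit\xi$ and $\wit\xi'$ are finite point measures on $[0,t]$, so all the integrals below are well defined.

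Next I would prove the pointwise, almost sure inequality $\int_0^t\log f(t-x)\,\wit\xi(\dd x)\ge\int_0^t\log f(t-x)\,\wit\xi'(\dd x)$; exponentiating, taking expectations, and using that $\wit\xi,\wit\xi'$ have the laws of $\xi,\xi'$ then gives the operator inequality. Put $h(x):=-\log f(t-x)$ for $x\in[0,t]$. Since $f$ is non-increasing with values in $[0,1]$, $h$ is non-negative and \emph{non-increasing} on $[0,t]$ (possibly $+\infty$ on an initial subinterval, precisely where $f$ vanishes). The layer-cake identity gives, for any measure $\mu$ on $[0,t]$,
\[
 \int_0^t h(x)\,\mu(\dd x)=\int_0^\infty \mu\big(\{x\in[0,t]\colon h(x)>y\}\big)\,\dd y,
\]
and because $h$ is non-increasing, each superlevel set $\{x\in[0,t]\colon h(x)>y\}$ is an initial segment $[0,b_y)$ or $[0,b_y]$ of $[0,t]$ (possibly empty), and it is the \emph{same} segment for $\wit\xi$ and for $\wit\xi'$, depending only on $h$. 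Now $|\wit\xi'(s)|\ge|\wit\xi(s)|$ for all $s\le t_0$ gives $\wit\xi'([0,b_y])\ge\wit\xi([0,b_y])$ directly, and in the half-open case $\wit\xi'([0,b_y))=\sup_{s<b_y}|\wit\xi'(s)|\ge\sup_{s<b_y}|\wit\xi(s)|=\wit\xi([0,b_y))$ by continuity from below. Integrating in $y$ yields $\int_0^t h\,\dd\wit\xi'\ge\int_0^t h\,\dd\wit\xi$, hence $\int_0^t\log f(t-x)\,\wit\xi(\dd x)\ge\int_0^t\log f(t-x)\,\wit\xi'(\dd x)$, as wanted. (If the left-hand side is $-\infty$, i.e.\ $\wit\xi$ charges the initial segment on which $h=+\infty$, then $\wit\xi'$ charges it too and the right-hand side is $-\infty$ as well, so the inequality still holds.)

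I expect the main difficulty to be bookkeeping rather than structural: keeping track of atoms of the point processes (whether a superlevel set is open or closed at its right endpoint), and the degenerate case in which $f$ vanishes on a subinterval — the ``deterministic explosion by a fixed time'' situation — which forces one to argue with $+\infty$-valued integrals. Both are controlled precisely because it suffices to verify the operator inequality for \emph{non-increasing} test functions $f$: that is what makes $h$ non-increasing and its superlevel sets genuine initial segments of $[0,t]$, and it is exactly the class of $f$ appearing in the hypothesis of Claim~\ref{cl::opdom}, so the restriction is free. The rest of the argument — the final invocation of Claim~\ref{cl::opdom} and the contrapositive for the conservative half — is immediate, and one should also remark that the hypothesis \eqref{eq::infty-crit} plays no role here, the comparison being valid for any reproduction processes with a.s.\ locally finite mass.
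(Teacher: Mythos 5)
Your proposal is correct and follows essentially the same route as the paper: reduce to Claim~\ref{cl::opdom} by exhibiting operator domination through the coupling furnished by $\xi'\,{\buildrel d,0\over\le}\,\xi$, then exponentiate and take expectations. The paper asserts the pointwise inequality $\int_0^t \log f(t-x)\,\xi(\mathrm dx)\ge\int_0^t \log f(t-x)\,\xi'(\mathrm dx)$ directly from the monotonicity of the integrand, whereas you verify this same step more carefully via the layer-cake decomposition into superlevel sets, which is a welcome but not structurally different justification.
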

\begin{proof}
Suppose $\xi'\ { \buildrel d,0 \over \le }\ \xi$, that is, there is a $t_0>0$ and a coupling of the processes with $|\xi'(t)| \ge |\xi(t)|$ for all $t\in[0,t]$. Then, for any non-increasing function $f:[0,t_0]\to [0,1]$ and
for $x'<x$ we have $\log f(t-x') < \log f(t-x)\le0$.  By the assumption on stochastic domination around the origin, consider a $t<t_0$. Then under the coupling
\[ \int_0^t \log f(t-x) \xi(\mathrm dx) \ge \int_0^t \log f(t-x) \xi'(\mathrm dx) \]
holds with probability $1$.
Exponentiating and taking expectations on both sides yields that 
\[  \big(T_{\xi} \psi \big)(t) \ge \big(T_{\xi'} \psi \big)(t), \]
hence, by Claim \ref{cl::opdom}, the operator $T_\xi$ dominates $T_{\xi'}$, finishing the proof of the first two statements.
The last statement is obvious by contradiction: if $BP_\xi$ is conservative but $BP_{\xi'}$ would explode, then by the first statement, $\xi$ would also be explosive, which is a contradiction.
\end{proof}
\section{Classical examples}\label{s::classic}
In this section we study well-known classical examples in terms of explosiveness.  We use the following notation. The (marginal) distribution function of a random variable $Y$  is denoted by $F_Y(t)=\Pv(Y\le t)$ and its generating function by $h_Y(s):=\Ev[s^Y]$. In the following examples, we assume that $\xi(\infty):=X\ge 0$, a discrete random variable having generating function $h_X(s)=\Ev[s^X]$. The birth times $(\sigma_i)_{1\le i\le X}$ are i.i.d. from distribution $F_\sigma(x)=\Pv(\sigma\le x)$ and finally, a pair of random variables $(I,C)$ with joint distribution function $F_{[I, C]}(s,t)=\Pv(I\le s, C\le t)$. 
\begin{example}[Age-dependent branching processes]\label{ex::agedep}\normalfont
In a usual age-dependent branching process, the root has a random number of children, $X$, with i.i.d.\ birth times from distribution $\sigma$.
In this case, 
\be\label{eq::xi-agedep} \xi(t)=\sum_{i=1}^X \ind_{\sigma_i\le t} \ee
Using this, the operator $T_\xi f$ is easy to calculate by first conditioning on $X$ and using the i.i.d. nature of $\sigma_i$ 
\[ \big(T_\xi f\big)(t)=\Ev\left[\prod_{i=1}^X \left( f(t-\sigma_i) \ind_{\sigma_i\le t} + \ind_{\sigma_i>t}\right)\right]=\Ev\left[\left(\int_{0}^t f(t-x) F_{\sigma}(\mathrm dx) + 1-F_{\sigma}(t) \right)^X\right]\]
and the operator becomes
\be\label{eq::t-agedep} \big(T_\xi f\big)(t)=h_X\left( 1-\int_0^t 1-f(t-x) F_\sigma(\mathrm dx) \right).  \ee
\end{example}
\begin{example}[Epidemics with contagious period]\label{ex::cont}\normalfont
In a model of an epidemic, each individual tries to transmit the infection to $X$ many other individuals, at i.i.d.\ times from distribution $\sigma$, but she can transmit the infection only in a random interval $[0,C]$ after being infected. That is, in this case
\be\label{eq::xi-cont} \xi(t)=\sum_{i=1}^X \ind_{ \sigma_i \le t } \ind_{\sigma_i \in [0,C]}.\ee
Note that setting $C\equiv \infty$ yields an age-dependent BP as in \eqref{eq::xi-agedep}.
Again, conditioning on $C, X$, we can use the i.i.d. nature of $\sigma_i$ to calculate the operator $T_\xi f$:
\be\label{eq::t-cont} \ba \big(T_\xi f\big)(t)&=\Ev\left[ \prod_{i=1}^X \left(f(t-\sigma_i) \ind_{ \sigma_i \le \min(t,C)} + (1- \ind_{ \sigma_i \le \min(t,C)}) \right)\right]\\  
&= \Ev\left[ \ind_{t>C} \prod_{i=1}^X \Big(f(t-\sigma_i) \ind_{ \sigma_i \le C} + \ind_{ \sigma_i > C} \Big) + \ind_{t<C} \prod_{i=1}^X \Big(f(t-\sigma_i) \ind_{ \sigma_i \le t} + \ind_{ \sigma_i > t} \Big)\right]  \\
&= \int_0^t h_X\left( 1-\int_0^c 1-f(t-x) F_{\sigma}(\mathrm dx) \right) F_C(\mathrm dc)\\  
&\quad + h_X\left( 1-\int_0^t 1-f(t-x) F_{\sigma}(\mathrm dx) \right) (1-F_C(t))
\ea\ee
\end{example}
\begin{example}[Epidemics with incubation times]\label{ex::inc}\normalfont
In this model of an epidemic, each individual tries to transmit the infection to $X$ many other individuals, at i.i.d.\ times from distribution $\sigma$, but she can transmit the infection only in a random interval $[I,\infty]$ after being infected. That is, in this case
\be\label{eq::xi-incu} \xi(t)=\sum_{i=1}^X \ind_{ \sigma_i \le t } \ind_{\sigma_i \in [I,\infty]}.\ee
Note that setting $I\equiv 0$ yields an age-dependent BP as in \eqref{eq::xi-agedep}.
Again, conditioning on $I, X$, we can use the i.i.d. nature of $\sigma_i$ to calculate the operator $T_\xi f$:
\be\label{eq::t-incu} \ba \big(T_\xi f\big)(t)
&=\Ev\left[ \ind_{I<t} \prod_{i=1}^X \left(f(t-\sigma_i) \ind_{ I<\sigma_i < t} + (1- \ind_{ I<\sigma_i < t}) \right) 
+ \ind_{I>t}\right]\\
&= \int_0^t h_X\left( 1- \int_i^t 1-f(t-x) F_{\sigma}(\mathrm dx) \right) F_I(\mathrm di)+ 1-F_I(t).\ea \ee
\end{example}
\begin{example}[Epidemics with incubation times and  contagious periods]\label{ex::cont}\normalfont
In this model of an epidemic, each individual tries to transmit the infection to $X$ many other individuals, at i.i.d.\ times from distribution $\sigma$, but she can transmit the infection only in a (possibly empty or infinite) random interval $[I,C]$ after being infected. That is, in this case
\be\label{eq::incucont} \xi(t)=\sum_{i=1}^X \ind_{ \sigma_i \le t } \ind_{\sigma_i \in [I,C]}.\ee
Note that setting $I\equiv 0$ yields \eqref{eq::xi-cont}, setting $C\equiv \infty$ gives \eqref{eq::xi-incu} and setting $I\equiv 0, C\equiv \infty$ results in \eqref{eq::xi-agedep}. For the operator $T_\xi$ of this process, see \eqref{eq::t-incu-cont} below.
\end{example}
\begin{example}[Backward process of epidemic models]\label{ex::backward}\normalfont
The backward version of the epidemic models above traces how the infection could have reached an individual in the population backward in time, and it was first noted in \cite{BaRe13}. That is, an individual has contact with $X$ many other individuals, at i.i.d.\ times from distribution $\sigma$, but she is only infected by the $i$th contact if she is in the (possibly empty or infinite) random interval $[I_i,C_i]$ after being infected, where the intervals $[I_i,C_i]$ are independent and independent of $\sigma_i$. That is, in this case
\be\label{eq::incucont-back} \xi(t)=\sum_{i=1}^X \ind_{ \sigma_i \le t } \ind_{\sigma_i \in [I_i,C_i]}.\ee
Note that setting $I_i\equiv 0$ yields the backward version of \eqref{eq::xi-cont}, setting $C_i\equiv \infty$ gives the backward version of \eqref{eq::xi-incu} and setting $I_i\equiv 0, C_i\equiv \infty$ results in the backward version of \eqref{eq::xi-agedep}, but in this last case the usual and the backward version are the same. See \eqref{eq::t-incu-cont-b} below for the formula for $T_\xi$ in this case.
\end{example}
\subsection{Unified notation for classical examples} 
In the sequel, we shall analyse the classical examples in terms of their explosivity and the relation between them. To do so, we abbreviate each process by a quadruple: the generating function of the total progeny, $h_X$, the distribution of the lifetime, $F_\sigma$, the distribution of the incubation time $F_I$ and finally the distribution of the contagious period $F_C$. In case $I$ or $C$ are equal to a constant with probability $1$, then we abuse the notation and write the constant instead of the distribution function. Thus, a quadruple $(h, F_\sigma, F_I, F_C)$ stands for a process as in \eqref{eq::incucont} with $I$ and $C$ being independent. In case $I$ and $C$ are both nontrivial and not independent, we give the joint distribution of $I, C$ by merging the last two entries $F_I, F_C$ and write  $F_{[I, C]}$ instead.
 We add a superscript $b$ for noting when we refer to a backward process, otherwise we always mean the usual (forward) version.
That is, an age-dependent BP as in \eqref{eq::xi-agedep} will be shortly denoted by $(h_X, F_\sigma, 0, \infty)$. A process with only incubation times/contagious period will be denoted by $(h_X, F_\sigma, F_I, \infty)$ and $(h_X, F_\sigma, 0, F_C)$, respectively. Their backward version are denoted by $(h_X, F_\sigma, F_I, \infty)^b$ and $(h_X, F_\sigma, 0, F_C)^b$, respectively.

\subsection{Comparison theorems for the classical examples.}\label{s::comp-class}
In this section we prove comparison theorems for the above listed classical examples.
A version of Theorems \ref{thm::comp1}  and \ref{thm::comp-tail} below -- without the contagious period $C$ -- appeared first in \cite{Grey74} for age-dependent BPs.   His proofs were analytic: he analysed the behaviour of the operator \eqref{eq::t-agedep}. Here, whenever possible, we provide new probabilistic proofs using the notion of coupling around the origin. We only fail to do so in Theorem \ref{thm::comp-tail}, that is, there our proof is analogous to that of Grey \cite{Grey74}. We explain in Example \ref{ex::tail-counter} below why a probabilistic proof using tail-coupling would fail.

The first theorem tells us that decreasing the birth times and making the end of the contagious interval can only help explosion, as long as the incubation times are all $0$.
\begin{theorem}\label{thm::comp1}
 Let $BP_\xi$ and $BP_{\xi'}$ be two age-dependent branching processes or epidemic models with contagious periods as in \eqref{eq::xi-agedep} or as in \eqref{eq::xi-cont} with reproduction functions $\xi$ and $\xi'$.  Let us assume that the birth time distributions $\sigma,\sigma'$  and (possibly infinite) contagious periods $C, C'$ satisfy
\[ \sigma'\  {\buildrel d,0 \over \le}\ \sigma,  \quad C\  {\buildrel d,0 \over \le}\ C'. \]
If $BP_\xi=(h_X, F_\sigma, 0, F_C)$ is explosive, then so is  $BP_{\xi'}=(h_X, F_{\sigma'}, 0, F_{C'})$, and the explosion time $V_{\xi'}$ of $BP_{\xi'}$ is stochastically dominated by the explosion time $V_\xi$ of $BP_{\xi}$. The same is true for the backward versions of the processes.
\end{theorem}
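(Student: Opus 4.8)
The plan is to reduce Theorem~\ref{thm::comp1} to the general comparison result Theorem~\ref{thm::comparison-general} by constructing an explicit coupling witnessing $\xi' \buildrel d,0 \over \le \xi$. First I would record the reproduction functions explicitly: for the forward contagious-period model, $\xi(t)=\sum_{i=1}^X \ind_{\sigma_i\le t}\ind_{\sigma_i\le C}$ and $\xi'(t)=\sum_{i=1}^{X}\ind_{\sigma_i'\le t}\ind_{\sigma_i'\le C'}$, where the two processes share the same offspring generating function $h_X$. Since $\sigma'\buildrel d,0\over\le\sigma$, by the coupling lemma following Definition~\ref{def::0-dom} there is a $t_1>0$ and a coupling with $\min\{\widetilde\sigma',t_1\}\le\min\{\widetilde\sigma,t_1\}$; similarly $C\buildrel d,0\over\le C'$ gives a $t_2>0$ and a coupling with $\min\{\widetilde C,t_2\}\le\min\{\widetilde C',t_2\}$. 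Take $t_0:=\min\{t_1,t_2\}$, couple the offspring numbers to be equal ($\widetilde X=\widetilde X'$), and for each $i\le\widetilde X$ couple $(\widetilde\sigma_i',\widetilde\sigma_i)$ and $(\widetilde C,\widetilde C')$ independently across $i$ for the birth times (the contagious period is a single variable per individual, coupled once).

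The key step is then the pointwise inequality $|\widetilde\xi'(t)|\ge|\widetilde\xi(t)|$ for all $t\le t_0$. Fix such a $t$ and a point $i$ that contributes to $\widetilde\xi(t)$, i.e.\ $\widetilde\sigma_i\le t$ and $\widetilde\sigma_i\le\widetilde C$. Because $t\le t_0\le t_1$ and $\widetilde\sigma_i\le t\le t_1$, the coupling gives $\widetilde\sigma_i'\le\widetilde\sigma_i\le t$. For the contagious constraint: we have $\widetilde\sigma_i\le\widetilde C$; if $\widetilde C\le t_2$ then $\widetilde\sigma_i'\le\widetilde\sigma_i\le\widetilde C\le\widetilde C'$ using $\min\{\widetilde C,t_2\}\le\min\{\widetilde C',t_2\}$, while if $\widetilde C>t_2$ then since $\widetilde\sigma_i'\le t\le t_0\le t_2<\widetilde C\le\widetilde C'$ we again get $\widetilde\sigma_i'\le\widetilde C'$. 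Hence $i$ also contributes to $\widetilde\xi'(t)$, proving $|\widetilde\xi'(t)|\ge|\widetilde\xi(t)|$ on $[0,t_0]$. By Definition~\ref{def::point-proc-dom} this is exactly $\xi'\buildrel d,0\over\le\xi$, so Theorem~\ref{thm::comparison-general} yields: if $BP_\xi$ is explosive then $BP_{\xi'}$ is explosive, and $V_{\xi'}\buildrel d\over\le V_\xi$.

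For the backward versions one argues identically, with the only change that each point $i$ carries its own independent interval $[I_i,C_i]$ (here $[0,C_i]$) rather than a shared one; the coupling is built per-coordinate $(\widetilde\sigma_i',\widetilde\sigma_i,\widetilde C_i,\widetilde C_i')$, and the same case analysis shows $\widetilde\sigma_i'\in[0,\widetilde C_i']$ whenever $\widetilde\sigma_i\in[0,\widetilde C_i]$ and $\widetilde\sigma_i\le t\le t_0$. Applying Theorem~\ref{thm::comparison-general} to the backward reproduction functions closes this case.

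I do not expect a genuine obstacle here: the content is entirely in choosing $t_0$ small enough to stay within the range of \emph{both} partial couplings and in the two-case split for the contagious-period comparison (the subtlety being that $\min\{\widetilde C,t_2\}\le\min\{\widetilde C',t_2\}$ only controls $\widetilde C$ versus $\widetilde C'$ below level $t_2$, so one must separately handle $\widetilde C>t_2$, where the birth time $\widetilde\sigma_i'\le t_0\le t_2$ is itself already small enough to satisfy the constraint automatically). The mild annoyance is bookkeeping: making sure the offspring counts, birth times, and interval endpoints are coupled on a common probability space in a mutually consistent way, which is routine since all the relevant marginals factorize appropriately.
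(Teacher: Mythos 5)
Your proposal is correct and follows essentially the same route as the paper's proof, which applies Theorem~\ref{thm::comparison-general} after exactly the same partial coupling on $X$, $(\sigma_i,\sigma_i')$, and $(C,C')$; the paper merely asserts the indicator inequality $\ind_{\sigma_i'\le t}\ind_{\sigma_i'\in[0,C']}\ge\ind_{\sigma_i\le t}\ind_{\sigma_i\in[0,C]}$ without spelling out the two-case check you supply. One small slip: in the case $\widetilde C>t_2$, your written chain ``$t_2<\widetilde C\le\widetilde C'$'' is not justified by the coupling (which only gives $\widetilde C'\ge t_2$), but the intended conclusion $\widetilde\sigma_i'\le t_2\le\widetilde C'$ is still valid.
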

\begin{proof} Set $t_0$ as the minimum of $t_0(F_\sigma, F_{\sigma'}), t_0(F_C,F_{C'})$ from Definition \ref{def::0-dom}. Use the same copy of $X$ for the two processes, and couple $\sigma_i, \sigma'_i$ and $C, C'$ so that $\min\{ \sigma_i', t_0\} \le\min\{ \sigma_i, t_0\}$ and $\min\{C, t_0\} \le\min\{ C', t_0\}$ holds for all $i$. 
Then we obtain a coupling with $\ind_{ \sigma_i' \le t } \ind_{\sigma_i' \in [0,C']} \ge  \ind_{ \sigma_i \le t } \ind_{\sigma_i \in [0,C]}$, which yields 
 a coupling between $\xi$ and $\xi'$ such that $|\xi'(t)| \ge |\xi(t)|$ holds almost surely under the coupling for all $t \in [0, t_0]$. By Theorem \ref{thm::comparison-general}, whenever $\xi$ is explosive, then so is $\xi'$, and whenever $\xi'$ is conservative, then so is $\xi$. The stochastic domination of the explosion times also follows.
 For the backward versions, the same coupling results in $\ind_{ \sigma_i' \le t } \ind_{\sigma_i' \in [0,C_i']} \ge  \ind_{ \sigma_i \le t } \ind_{\sigma_i \in [0,C_i]}$, and hence the statement follows.
\end{proof}
The next theorem says that decreasing the incubation time (and increasing the end of the contagious interval) can only help explosion. 
\begin{theorem}\label{thm::comp2} Let $BP_\xi$ and $BP_{\xi'}$ be two age dependent branching processes or epidemic models with (possibly zero) incubation times and (possibly infinite) contagious periods as in \eqref{eq::xi-agedep} or as in \eqref{eq::xi-incu} with reproduction functions $\xi$ and $\xi'$. Let us assume that the (possibly zero) $I, I'$ and (possibly infinite) contagious periods $C, C'$ satisfy
\[ I'\  {\buildrel d,0 \over \le}\ I,  \quad C\  {\buildrel d,0 \over \le}\ C'. \]
If $BP_{\xi}=(h_X, F_\sigma, F_I, F_C)$  is explosive, then so is  $BP_{\xi'}=(h_X, F_\sigma, F_{I'}, F_{C'})$, and the explosion time $V_{\xi'}$ of $BP_{\xi'}$ is stochastically dominated by the explosion time $V_\xi$ of $BP_\xi$. The same is true for the backward versions of the processes. 
\end{theorem}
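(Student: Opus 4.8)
\section*{Proof proposal for Theorem~\ref{thm::comp2}}

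The plan is to mimic the proof of Theorem~\ref{thm::comp1}: I will couple the two reproduction processes so that $BP_{\xi'}$ has, pointwise, at least as many births as $BP_\xi$ on a neighbourhood of the origin, and then invoke Theorem~\ref{thm::comparison-general}. Concretely, set $t_0:=\min\{t_0(F_I,F_{I'}),\,t_0(F_C,F_{C'})\}>0$, with $t_0(\cdot,\cdot)$ as in Definition~\ref{def::0-dom}, and realise everything on one probability space: use the \emph{same} offspring variable $X$ and the \emph{same} i.i.d.\ birth times $(\sigma_i)_{i\le X}$ in both processes (legitimate since $F_\sigma$ is common to both), couple $(I,I')$ through a single common uniform as in the Lemma following Definition~\ref{def::0-dom} so that $\min\{\widetilde I',t_0\}\le\min\{\widetilde I,t_0\}$ a.s., and independently couple $(C,C')$ so that $\min\{\widetilde C,t_0\}\le\min\{\widetilde C',t_0\}$ a.s.; because $I\perp C$ and $I'\perp C'$ these combine into one coupling of the pairs $(I,C)$ and $(I',C')$.

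The heart of the argument is the pointwise bound: under this coupling, for every $i$ and every $t\in[0,t_0)$,
\[
\ind_{\sigma_i\le t}\,\ind_{\sigma_i\in[\widetilde I',\widetilde C']}\ \ge\ \ind_{\sigma_i\le t}\,\ind_{\sigma_i\in[\widetilde I,\widetilde C]}.
\]
To check it, suppose the right-hand side equals $1$, i.e.\ $\widetilde I\le\sigma_i\le\widetilde C$ and $\sigma_i\le t<t_0$. From $\widetilde I\le\sigma_i<t_0$ we get $\min\{\widetilde I,t_0\}=\widetilde I$, hence $\min\{\widetilde I',t_0\}\le\widetilde I<t_0$, which forces $\widetilde I'=\min\{\widetilde I',t_0\}\le\widetilde I\le\sigma_i$; a two-line case split on $\widetilde C\le t_0$ versus $\widetilde C>t_0$, using $\min\{\widetilde C,t_0\}\le\min\{\widetilde C',t_0\}$ (in the second case simply $\widetilde C'\ge t_0>\sigma_i$), gives $\sigma_i\le\widetilde C'$. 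Summing over $i\le X$ yields $|\widetilde\xi'(t)|\ge|\widetilde\xi(t)|$ for all $t\in[0,t_0)$, hence on the closed interval $[0,t_0/2]$; this is exactly $\xi'\ {\buildrel d,0\over\le}\ \xi$ in the sense of Definition~\ref{def::point-proc-dom}. Theorem~\ref{thm::comparison-general} then delivers all three conclusions: explosivity of $BP_\xi$ implies explosivity of $BP_{\xi'}$, $V_{\xi'}\ {\buildrel d\over\le}\ V_\xi$, and conservativeness of $BP_{\xi'}$ implies conservativeness of $BP_\xi$.

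For the backward versions the only change is bookkeeping: replace the single pair $(I,C)$ (resp.\ $(I',C')$) by the i.i.d.\ sequence $(I_i,C_i)_{i}$ (resp.\ $(I'_i,C'_i)_{i}$), coupling $(I_i,I'_i)$ and $(C_i,C'_i)$ independently across $i$ in the same way; the identical pointwise inequality, now with $\widetilde I_i,\widetilde C_i$ in place of $\widetilde I,\widetilde C$, again gives $|\widetilde\xi'(t)|\ge|\widetilde\xi(t)|$ near the origin, and the conclusion follows as before. I do not expect a genuine obstacle here --- the proof is essentially the coupling computation above --- so the only point requiring care is that the two partial couplings (shorter incubation time, longer contagious period) are run simultaneously and that the reproduction-measure domination is read off on a closed sub-interval of $[0,t_0)$ rather than at the endpoint $t_0$, where an atom of $F_I$ coinciding with one of $F_\sigma$ could otherwise spoil the pointwise bound.
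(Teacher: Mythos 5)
Your proposal is correct and takes essentially the same route as the paper: couple $X$ and $(\sigma_i)$ to be identical in the two processes, couple $I,I'$ and $C,C'$ around the origin, verify the pointwise indicator inequality, and invoke Theorem~\ref{thm::comparison-general}. The paper's proof is the terse version of exactly this, and your extra care about reading the domination off a closed sub-interval of $[0,t_0)$ (to avoid an atom at the endpoint) is a correct refinement of what the paper compresses into ``for a small enough $t_0$.''
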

\begin{proof} In this case, using the same copy of $X$ and copies of $\sigma_i$ for the two processes, for a small enough $t_0$, the indicator variables can be coupled so that $\ind_{ \sigma_i \le t } \ind_{\sigma_i \in [I',C']} \ge \ind_{ \sigma_i \le t } \ind_{\sigma_i \in [I,C]}$ and so $|\xi'(t)|\ge |\xi(t)|$ for all $t$ under the coupling. Theorem \ref{thm::comparison-general} finishes the proof. For the backward versions, the same coupling results in $\ind_{ \sigma_i \le t } \ind_{\sigma_i \in [I_i',C_i']} \ge  \ind_{ \sigma_i \le t } \ind_{\sigma_i \in [I_i,C_i]}$, and hence the statement follows.
\end{proof}
An immediate corollary of Theorems \ref{thm::comp1} and \ref{thm::comp2} is the following
\begin{corollary}\label{cor:droppingIC} If at least one of the branching process $(h, F_\sigma, F_{[I,C]})$ or $(h, F_\sigma, F_{[I,C]})^b$ is explosive for $(I, C)\not \equiv (0, \infty)$, then so is  the age-dependent BP $(h, F_\sigma, 0, \infty)$. 
\end{corollary}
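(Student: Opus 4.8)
The plan is to deduce Corollary~\ref{cor:droppingIC} directly from the two comparison theorems just proved, by a short chain of stochastic-domination-around-the-origin arguments. The key observation is that the age-dependent BP $(h,F_\sigma,0,\infty)$ is exactly the quadruple in which the incubation time is the constant $0$ and the contagious period is the constant $\infty$, and these are the \emph{most favourable} choices for both parameters: $0\ {\buildrel d,0\over\le}\ I$ holds trivially for every nonnegative random variable $I$ (indeed $F_0(t)=1\ge F_I(t)$ for all $t\ge 0$, so one may take $t_0$ arbitrary), and $C\ {\buildrel d,0\over\le}\ \infty$ holds trivially as well since the constant $\infty$ has distribution function identically $0$ on $[0,\infty)$. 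So the age-dependent BP sits at the bottom of the ordering in both the $I$-coordinate and the $C$-coordinate.

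First I would handle the forward case. Suppose $(h,F_\sigma,F_{[I,C]})$ is explosive for some $(I,C)\not\equiv(0,\infty)$. Apply Theorem~\ref{thm::comp2} with $(\sigma,I,C)$ in the role of the dominated-at-infinity process data and $(\sigma, I'=0, C'=\infty)$ in the role of the comparison process: the hypotheses $I'\ {\buildrel d,0\over\le}\ I$ and $C\ {\buildrel d,0\over\le}\ C'$ are both satisfied by the trivial observations above, and the birth-time distribution $F_\sigma$ is unchanged. Hence $BP_{\xi'}=(h,F_\sigma,0,\infty)$ is explosive, with $V_{\xi'}\ {\buildrel d\over\le}\ V_\xi$ as a bonus. (One could equally route this through Theorem~\ref{thm::comp1} by first using $C\ {\buildrel d,0\over\le}\ \infty$ to drop the contagious period and then noting the remaining process has $I\equiv 0$; but Theorem~\ref{thm::comp2} does both moves at once, so it is cleaner.) Strictly speaking, if $I\equiv 0$ but $C$ is nontrivial, Theorem~\ref{thm::comp1} applies instead with $\sigma'=\sigma$ and $C'=\infty$; in all cases one of the two theorems delivers the conclusion, so a brief case split (or simply invoking Theorem~\ref{thm::comp2}, which subsumes both incubation and contagious-period coordinates) suffices.

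Second, the backward case: if $(h,F_\sigma,F_{[I,C]})^b$ is explosive, the very same application of Theorem~\ref{thm::comp2} works, since that theorem explicitly asserts ``the same is true for the backward versions of the processes''; and the backward version of $(h,F_\sigma,0,\infty)$ coincides with the forward version (as noted in Example~\ref{ex::backward}), namely the ordinary age-dependent BP. Thus in either case we land on the explosivity of $(h,F_\sigma,0,\infty)$.

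\begin{proof}[Proof of Corollary~\ref{cor:droppingIC}]
Observe that for the constant random variable $I'\equiv 0$ we have $F_{I'}(t)=1\ge F_I(t)$ for every $t\ge 0$, so $I'\ {\buildrel d,0\over\le}\ I$ holds with, say, $t_0=1$; similarly, for the constant $C'\equiv \infty$ we have $F_{C'}(t)=0\le F_C(t)$ for every $t\ge 0$, so $C\ {\buildrel d,0\over\le}\ C'$ holds as well. Now suppose one of $(h,F_\sigma,F_{[I,C]})$ or $(h,F_\sigma,F_{[I,C]})^b$ is explosive, with $(I,C)\not\equiv(0,\infty)$. Applying Theorem~\ref{thm::comp2} with the birth-time distribution $F_\sigma$ unchanged, incubation times $I'\equiv 0$ and contagious periods $C'\equiv\infty$, we conclude that $(h,F_\sigma,0,F_\infty)=(h,F_\sigma,0,\infty)$ is explosive; the statement of Theorem~\ref{thm::comp2} covers both the forward and the backward versions, and by Example~\ref{ex::backward} the backward version of $(h,F_\sigma,0,\infty)$ is the forward one, i.e.\ the age-dependent BP itself. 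This is precisely the claim.
\end{proof}

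I do not expect a genuine obstacle here: the corollary is a direct specialization of Theorems~\ref{thm::comp1}--\ref{thm::comp2} to the extremal parameter values $I=0$, $C=\infty$, and the only thing to be careful about is checking that the trivial domination relations $0\ {\buildrel d,0\over\le}\ I$ and $C\ {\buildrel d,0\over\le}\ \infty$ genuinely fit Definition~\ref{def::0-dom} (they do, since that definition only requires the inequality of distribution functions on \emph{some} initial interval $[0,t_0]$, and here it holds on all of $[0,\infty)$), and that the backward/forward identification of the age-dependent BP is invoked cleanly.
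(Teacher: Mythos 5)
Your proof is correct and takes essentially the same route as the paper, which labels the corollary an immediate consequence of Theorems~\ref{thm::comp1} and~\ref{thm::comp2}: you specialize Theorem~\ref{thm::comp2} to the extremal choices $I'\equiv 0$, $C'\equiv\infty$, verify the trivial dominations $0\ {\buildrel d,0\over\le}\ I$ and $C\ {\buildrel d,0\over\le}\ \infty$, and use the fact that the backward and forward versions of $(h,F_\sigma,0,\infty)$ coincide. One small inaccuracy in your aside: you suggest one could ``first use $C\ {\buildrel d,0\over\le}\ \infty$ via Theorem~\ref{thm::comp1} to drop the contagious period,'' but Theorem~\ref{thm::comp1} requires $I\equiv 0$ on both sides, so it cannot be the first step when $I$ is nontrivial; your actual proof body, which invokes only Theorem~\ref{thm::comp2}, sidesteps this and is fine.
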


\begin{remark}\normalfont
We would also like to compare two epidemic models with non-zero incubation times when $\sigma' \ {\buildrel d,0 \over \le }\ \sigma$. However, with the usual coupling, $\ind_{ \sigma_i' \le t } \ind_{\sigma_i' \in [I,\infty]} \not\ge \ind_{ \sigma_i \le t } \ind_{\sigma_i \in [I, \infty]}$ and as a result the two processes cannot necessarily be compared.
\end{remark}
However, assuming that either the density or the mass at any point is dominated in one process by the other, we obtain the following
\begin{theorem}\label{thm::comp4} Let $BP_\xi$ and $BP_{\xi'}$ be two age dependent branching processes or epidemic models with (possibly zero) incubation times and (possibly infinite) contagious periods as in \eqref{eq::xi-agedep} or as in \eqref{eq::xi-incu} with reproduction functions $\xi$ and $\xi'$. Let us assume that the birth-time distribution $\sigma, \sigma'$ satisfy for all $t\in[0, t_0]$
\[ F_{\sigma'}(\mathrm dt)\ge F_{\sigma}(\mathrm dt). \]
If $BP_{\xi}=(h_X, F_\sigma, F_{[I,C]})$  is explosive, then so is  $BP_{\xi'}=(h_X, F_{\sigma'}, F_{[I,C]})$, and the explosion time $V_{\xi'}$ of $BP_{\xi'}$ is stochastically dominated by the explosion time $V_\xi$ of $BP_\xi$. The same is true for the backward versions of the processes. 
\end{theorem}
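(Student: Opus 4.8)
The plan is to verify the operator-domination hypothesis of Claim~\ref{cl::opdom} for $T_\xi$ and $T_{\xi'}$ directly at the analytic level. As the remark preceding the theorem explains, the naive coupling $\sigma'\le\sigma$ is useless here: shortening a birth-time may push it below the incubation time, so under that coupling $\ind_{\sigma_i'\le t}\ind_{\sigma_i'\in[I,C]}\not\ge\ind_{\sigma_i\le t}\ind_{\sigma_i\in[I,C]}$. The stronger hypothesis $F_{\sigma'}(\mathrm dt)\ge F_\sigma(\mathrm dt)$ on $[0,t_0]$ is precisely what allows the comparison to be pushed through the generating function $h_X$ at the level of operators.

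First I would record the operator of the general model with incubation times and contagious periods in its $h_X$-form, equation~\eqref{eq::t-incu-cont}: conditioning on $X$ and $(I,C)$ and using that the $\sigma_i$ are i.i.d.,
\[
\big(T_\xi f\big)(t)=\Ev\!\left[\,h_X\!\Big(1-\int_{[I,\,\min(t,C)]}\!\big(1-f(t-x)\big)\,F_\sigma(\mathrm dx)\Big)\right],
\]
the expectation being over $(I,C)$, with the convention that the inner integral equals $0$ (so that $h_X(1)=1$) when $I>\min(t,C)$. It is convenient to work with this form rather than with $\int_0^t\log f(t-x)\,\xi(\mathrm dx)$ since the former stays finite even when $f$ vanishes.

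The main step is then immediate. The function $h_X$ is the probability generating function of the non-negative integer-valued variable $X$, hence non-decreasing on $[0,1]$, and since $f\le1$ the integrand $1-f(t-x)$ is non-negative. For $t\le t_0$ the range of integration satisfies $[I,\min(t,C)]\subseteq[0,t]\subseteq[0,t_0]$, so the hypothesis $F_{\sigma'}(\mathrm dx)\ge F_\sigma(\mathrm dx)$ on $[0,t_0]$ gives, pointwise in $(I,C)$,
\[
\int_{[I,\min(t,C)]}\!\big(1-f(t-x)\big)\,F_{\sigma'}(\mathrm dx)\ \ge\ \int_{[I,\min(t,C)]}\!\big(1-f(t-x)\big)\,F_\sigma(\mathrm dx),
\]
and applying the non-decreasing $h_X$ to $1$ minus each side and then taking the expectation over $(I,C)$ — which is common to both processes — yields $\big(T_{\xi'}f\big)(t)\le\big(T_\xi f\big)(t)$ for every $t\in[0,t_0]$ and every $f\colon[0,t_0]\to[0,1]$, in particular for every non-increasing such $f$. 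Claim~\ref{cl::opdom}, with $T_\xi$ dominating $T_{\xi'}$, then gives that explosivity of $BP_\xi$ implies explosivity of $BP_{\xi'}$ together with $V_{\xi'}\ {\buildrel d\over\le}\ V_\xi$. For the backward version one repeats the argument with the operator~\eqref{eq::t-incu-cont-b}, which in $h_X$-form reads $\big(T_\xi f\big)(t)=h_X\big(1-\Ev[(1-f(t-\sigma))\,\ind_{\sigma\le t}\,\ind_{\sigma\in[I,C]}]\big)$; replacing $F_\sigma$ by $F_{\sigma'}$ only increases the expectation inside (again because $1-f\ge0$ and $\ind_{\sigma\le t}$ restricts $\sigma$ to $[0,t_0]$), hence decreases the argument of $h_X$, and Claim~\ref{cl::opdom} concludes as before.

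There is essentially no conceptual obstacle; the effort is purely bookkeeping of the operator formulas~\eqref{eq::t-incu-cont} and~\eqref{eq::t-incu-cont-b}, including the degenerate configuration $I>\min(t,C)$ (empty integral, contributing $h_X(1)=1$). The one point worth flagging is that the argument uses neither monotonicity of the test function $f$ nor any coupling — only $0\le f\le1$ and the monotonicity of $h_X$ — which is exactly the reason the hypothesis had to be strengthened from $\sigma'\ {\buildrel d,0\over\le}\ \sigma$ to domination of the reproduction measures.
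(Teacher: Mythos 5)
Your proof is correct and follows essentially the same route as the paper's: both verify the operator-domination hypothesis of Claim~\ref{cl::opdom} directly from the $h_X$-form of the operators \eqref{eq::t-incu-cont} and \eqref{eq::t-incu-cont-b}, using that $1-f\ge 0$ and $F_{\sigma'}(\mathrm dt)\ge F_\sigma(\mathrm dt)$ on $[0,t_0]$ force the argument of the non-decreasing $h_X$ down. The only cosmetic difference is that you condense the three-region decomposition $A(t),B(t),C(t)$ into a single conditional expectation over $(I,C)$, which changes nothing of substance.
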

Since the proof uses the specific form of the operators $T_\xi$ and $T_{\xi'}$, we postpone the proof after the proof of Theorem \ref{thm::forw-back} below. 
\begin{remark}\normalfont Note that the disadvantage of this theorem is that it only allows us to compare two processes if they both have densities and/or if they both put mass \emph{on the same set of points}.
\end{remark}

The next comparison theorem tells us that if the total number of offspring $X'$ has `heavier tails' then $X$, then the explosion of the process with the lighter tail implies the explosion of the process with heavier tails. 
Intuitively, heavier tail behaviour implies more points around the origin, so we would like to say that if $Y$ tail-dominates $X$ then the BP with total offspring $Y$ is more likely to explode.   This is unfortunately not always the case. Consider namely the pair $\Pv(Y=0)=\Pv(X=K)=p_0$ while $\Pv(Y=K+\ell)=\Pv(X=K+\ell)$ for all $\ell\ge 1$. In this case $Y$ tail-dominates $X$ (and also $X$ tail-dominates $Y$), and a simple calculation shows that $h_Y(s)\ge h_X(s)$ for all $s\in (0,1)$, hence $h_Y$ is not steeper at $1$. We see below in the proof of Theorem \ref{thm::comp-tail}, that when everything else is the same, then the process with steeper generating function at $1$ is more likely to explode. 

This example is of course very artificial: the tail of the two distributions are eventually the same. One might suspect that when the tail distributions differ enough, then tail-domination would yield steeper generating functions, but this is still not the case,  even if we require that $F_X(\ell)>F_Y(\ell)$ for infinitely many values of $\ell$. The following counterexample provides with such a construction:

\begin{example}[Tail-domination may not imply steeper generating functions]\label{ex::tail-counter} Let $X$ and $Y$ be two nonnegative integer-valued random variables with $X\, {\buildrel d, \infty \over \le}\, Y,$ $K$ as in Definition \ref{def::0-dom} be an odd number. Let $\Pv(X=K)=\Pv(Y=0)=p_0$ and let $F_X(\ell) > F_Y(\ell)$ when $\ell$ is even and $F_X(\ell) = F_Y(\ell)$ when $\ell$ is odd, for all $\ell>K$. Then for all $s$, $h_Y(s)>h_X(s)$, that is, the generating function of $Y$ is not steeper at $1$.\end{example}
%
\begin{proof} Let $K$ be as in the Definition \ref{def::0-dom}. Then, 
\[ \Ev[s^X]-\Ev[s^Y]= \Ev[(s^X- s^Y) \ind_{Y\le K}]+ \Ev[(s^X-s^Y)\ind_{K<  Y}]. \]
The first term might be negative, and is the smallest when $X\equiv K$ and $Y\equiv 0$ on $\{Y\le K\}$, exactly as described in the example. Then 
\[ |\Ev[(s^X-s^Y) \ind_{Y\le K}]| = (1-s^K)   \Pv(Y\le K)= K\delta   \Pv(Y\le K)(1+o(1)), \]
where $\delta:=1-s$ and $o(1)\to 0$ as $\delta\to 0$. 
We need a bound on the second term. Since $F_X(\ell)\ge F_Y(\ell)$ for all even $\ell>K$ but $F_X(\ell)=F_Y(\ell)$ for all odd $\ell$, 
elementary calculation shows that
\[ \Ev[(s^X-s^Y)\ind_{K< Y}] = \sum_{i=\lceil K/2 \rceil}s^{2i}(\Pv(X=2i)-\Pv(Y=2i)) (1-s)\le \Pv(X> K)\delta.\]
As a result,
\[h_X(s)-h_Y(s)\le  \delta \left(\Pv(X> K) - K \Pv(Y\le K)(1+o(1))\right) <0 \]
when $\delta$ is small enough and $ p_0>1/(K+1)$. 
\end{proof}
As a result of this counterexample, we are left with the following comparison theorem about the distribution of the number of offspring:

\begin{theorem}\label{thm::comp-tail}
Let $BP_\xi$ and $BP_{\xi'}$ be two age dependent branching processes or epidemic models with (possibly zero) incubation times and (possibly infinite) contagious periods as in \eqref{eq::xi-agedep} or as in \eqref{eq::xi-incu} with reproduction functions $\xi$ and $\xi'$. Let us assume that there exists an $s_0<1$ that the generating functions of the total progeny $X,X'$ satisfy for all $s\in (s_0, 1)$ that
\[ h_{X'}(s)\le  h_{X}(s). \]
If $BP_{\xi}=(h_X, F_\sigma, F_{[I,C]})$  is explosive, then so is $BP_{\xi'}=(h_{X'}, F_\sigma, F_{[I,C]})$, and the  The same is true for the backward versions of the processes. 
\end{theorem}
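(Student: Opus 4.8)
The plan is to reduce everything to the operator comparison machinery of Claim \ref{cl::opdom} together with the fixed-point characterization of explosivity (Lemma \ref{lem::fixpoint} and Claim \ref{cl::testfunction}). Concretely, I want to show that, for a suitably chosen $t_0>0$, the operator $T_{\xi}$ dominates $T_{\xi'}$ on non-increasing functions $f:[0,t_0]\to[0,1]$, i.e. $\big(T_\xi f\big)(t)\ge\big(T_{\xi'}f\big)(t)$ for all $t\in[0,t_0]$; once this is established, Claim \ref{cl::opdom} immediately yields that explosivity of $BP_\xi$ implies explosivity of $BP_{\xi'}$ (and the stochastic domination of explosion times, if one wants to keep that part of the conclusion). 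The difference from Theorems \ref{thm::comp1}--\ref{thm::comp4} is that here one cannot couple the two reproduction point processes so that one dominates the other around the origin — changing the offspring law does not give a pointwise domination of $|\xi(t)|$ — so I must work directly with the explicit formulas for the operators rather than with coupling around the origin.

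First I would write out the operator for the general epidemic model with incubation time and contagious period, conditioning on $(I,C,X)$ and using the i.i.d.\ structure of the $\sigma_i$, to get an expression of the form $\big(T_\xi f\big)(t)=\Ev\!\left[h_X\big(1-g_f(t;I,C)\big)\right]$ where $g_f(t;I,C)=\int_{I\wedge t}^{C\wedge t}\big(1-f(t-x)\big)F_\sigma(\mathrm dx)\ \big(=0\ \text{when }I>t\big)$, plus the analogous expression with an extra indicator term when $I>t$; for the backward version one gets a product over $i$ of the same inner quantity, which after conditioning becomes $\Ev\big[\big(1-g_f(t;I_1,C_1)\big)^X\big]$-type terms. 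The key point is that, since $f$ and $F_\sigma$ are common to both processes, the quantity $1-g_f(t;I,C)$ lies in $[0,1]$ and is the \emph{same} for $BP_\xi$ and $BP_{\xi'}$; only the generating function of the offspring changes. So the comparison reduces to: for $u\in[0,1]$, $h_X(u)\le h_{X'}(u)$ for $u$ close enough to $1$ should force $\big(T_\xi f\big)(t)\ge\big(T_{\xi'}f\big)(t)$ near $t=0$.

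The mechanism for that is that for small $t$ the argument $u=1-g_f(t;I,C)$ is close to $1$ uniformly: indeed $g_f(t;I,C)\le F_\sigma(t)-F_\sigma(0)\to 0$ as $t\to 0$ (using $F_\sigma(0)=0$ from \eqref{eq::infty-crit}), so there is a $t_0$ with $1-g_f(t;I,C)\in(s_0,1)$ for all $t\le t_0$, all $f\in\Omega_{[0,1]}$, and all realizations of $(I,C)$. On that range $h_{X'}\le h_X$ by hypothesis, hence pointwise (inside the expectation over $(I,C,X)$) $h_X(1-g_f)\ge h_{X'}(1-g_f)$, and taking expectations preserves this; the leftover $\ind_{I>t}$ terms are identical for the two processes, so they cancel. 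This gives $\big(T_\xi f\big)(t)\ge\big(T_{\xi'}f\big)(t)$ on $[0,t_0]$ for all non-increasing $f$, and Claim \ref{cl::opdom} finishes it. For the backward version the same argument works verbatim after noting the inner per-$i$ quantity is again $1-g_f(t;I_i,C_i)\in(s_0,1)$, so the product over $i$ raised to the (conditioned) count gives $\Ev\big[(1-g_f)^{X}\big]$ with the two processes differing only through the law of $X$.

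The main obstacle, and the one place to be careful, is producing the \emph{uniform} $t_0$: one needs $1-g_f(t;I,C)>s_0$ simultaneously for \emph{all} test functions $f$ and all random $I,C$, which is exactly why the bound $g_f(t;I,C)\le F_\sigma(t)$ (independent of $f,I,C$) together with right-continuity of $F_\sigma$ at $0$ and $F_\sigma(0)=0$ is essential — this is what makes $t_0$ depend only on $F_\sigma$ and $s_0$. A secondary subtlety is bookkeeping the piece where $I>t$: in Example \ref{ex::inc}/\ref{ex::cont} the operator has an additive $1-F_I(t)$ (or $\ind_{I>t}$ inside the expectation) term; since $F_I$ is identical for $\xi$ and $\xi'$, these additive pieces match and drop out of the difference, so no monotonicity of $h_X$ at $1$ in the ordinary sense is needed — only the pointwise inequality $h_{X'}\le h_X$ near $1$, which is precisely the hypothesis and, as the counterexample in Example \ref{ex::tail-counter} shows, strictly weaker than tail-domination of $X$ by $X'$.
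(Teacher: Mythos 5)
Your proposal is correct and follows essentially the same route as the paper's proof: both write out the explicit operator formulas, observe that the arguments fed into the offspring generating function are identical for $\xi$ and $\xi'$ (only $h_X$ is replaced by $h_{X'}$), show that these arguments are uniformly close to $1$ for small $t$ because they are bounded below by $1-F_\sigma(t)$, and then invoke Claim~\ref{cl::opdom}. Your version is slightly more explicit about the uniformity of $t_0$ over all test functions $f$ and all realizations of $(I,C)$ — a point the paper treats tersely — but the mechanism and the reliance on the operator-domination claim are the same.
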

We again postpone the proof. 
The next theorem say that the backward process explodes faster than the forward process of an epidemic:
\begin{theorem}[Explosion of the forward process implies explosion of backward process]\label{thm::forw-back}\ \\Suppose $(h_X, F_\sigma, F_{[I,C]})$ is explosive. Then so is  $(h_X, F_\sigma, F_{[I,C]})^b$, and the explosion time $V^b$ of the backward process is stochastically dominated by the explosion time of the forward process $V$, i.e., $V^b \ {\buildrel d \over \le V}$. As special cases we obtain that  the explosion of $(h_X, F_\sigma, 0, F_C)$ implies the explosion of $(h_X, F_\sigma, 0, F_C)^b$ and the explosion of $(h_X, F_\sigma, F_I, \infty)$ implies the explosion of $(h_X, F_\sigma, F_I, \infty)^b$.
\end{theorem}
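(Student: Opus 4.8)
The plan is to prove the theorem through operator domination and Claim~\ref{cl::opdom}, exactly as in the comparison results above, with the single new ingredient being the convexity of a probability generating function. Write $\xi$ for the forward reproduction process $(h_X,F_\sigma,F_{[I,C]})$ and $\xi'$ for its backward version $(h_X,F_\sigma,F_{[I,C]})^b$, and recall the operators from \eqref{eq::t-incu-cont} and \eqref{eq::t-incu-cont-b}. The key observation I would isolate is that, for a fixed $f\in\Omega_{[0,1]}$ and $t\ge 0$, if we set
\[ J_f(t,I,C):=\int_{[I,\,\min(t,C)]}\bigl(1-f(t-x)\bigr)\,F_\sigma(\mathrm dx)\ \in[0,1], \]
(with the integral read as $0$ when $[I,\min(t,C)]=\emptyset$, e.g.\ when $I>t$ or the interval $[I,C]$ is empty), then the forward operator is $\bigl(T_\xi f\bigr)(t)=\Ev\bigl[h_X(1-J_f(t,I,C))\bigr]$, the expectation being over the single shared pair $(I,C)\sim F_{[I,C]}$, whereas the backward operator --- in which the children carry i.i.d.\ pairs $(I_i,C_i)$, which is precisely what lets $h_X$ come out of the product over children --- is $\bigl(T_{\xi'}f\bigr)(t)=h_X\bigl(1-\Ev[J_f(t,I,C)]\bigr)$.

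With this in hand the argument would be immediate: $h_X(s)=\sum_{k\ge0}\Pv(X=k)s^k$ is convex on $[0,1]$ (a power series with non-negative coefficients), and $1-J_f(t,I,C)$ takes values in $[0,1]$, so Jensen's inequality gives
\[ \bigl(T_\xi f\bigr)(t)=\Ev\bigl[h_X(1-J_f(t,I,C))\bigr]\ \ge\ h_X\bigl(\Ev[1-J_f(t,I,C)]\bigr)=\bigl(T_{\xi'}f\bigr)(t) \]
for every $t\ge0$ and every $f\in\Omega_{[0,1]}$, in particular for every non-increasing $f:[0,t_0]\to[0,1]$ and every $t_0>0$. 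Claim~\ref{cl::opdom} then yields at once that explosivity of $BP_\xi$ implies explosivity of $BP_{\xi'}$ and that $V^b=V_{\xi'}\ {\buildrel d\over\le}\ V_\xi=V$. The two stated special cases drop out by specialising: $I\equiv0$ gives the implication for $(h_X,F_\sigma,0,F_C)$ and its backward version, and $C\equiv\infty$ gives it for $(h_X,F_\sigma,F_I,\infty)$ and its backward version.

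The only step that needs genuine care --- and what I would regard as the ``main obstacle'', although it is really bookkeeping rather than a true difficulty --- is deriving the backward operator \eqref{eq::t-incu-cont-b} in the factorised form $h_X(1-\Ev[J_f])$: one must condition on $X$ first and then use that the triples $(\sigma_i,I_i,C_i)$ are i.i.d.\ across $i$, so that the conditional product over the children collapses to the $X$-th power of $\Ev_{\sigma,I,C}[\,f(t-\sigma)\ind_{\sigma\in[I,\min(t,C)]}+1-\ind_{\sigma\in[I,\min(t,C)]}\,]=1-\Ev[J_f(t,I,C)]$, and only then is $h_X$ applied. In the forward process, by contrast, the single pair $(I,C)$ is shared by all children, so $h_X$ sits \emph{inside} the expectation over $(I,C)$; the entire content of the theorem is the gap between these two orders of operations, and Jensen closes it in exactly the right direction because generating functions are convex. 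Note that, unlike in Theorems~\ref{thm::comp1}--\ref{thm::comp4}, no domination hypothesis such as $\sigma'\ {\buildrel d,0\over\le}\ \sigma$ and no monotonicity of $f$ enters here.
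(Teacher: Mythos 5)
Your proof is correct and follows essentially the same route as the paper: both express the forward operator as $\Ev_{I,C}[h_X(\cdot)]$ and the backward operator as $h_X(\Ev_{I,C}[\cdot])$, invoke convexity of $h_X$ via Jensen's inequality to get $T_\xi \geq T_{\xi'}$, and conclude by Claim~\ref{cl::opdom}. Your notation with $J_f(t,I,C)$ is a clean repackaging of the paper's explicit integrals over the regions $A(t)$, $B(t)$, $C(t)$ in \eqref{eq::t-incu-cont}--\eqref{eq::t-incu-cont-b}, but the underlying argument is identical.
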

\begin{proof}[Proof of Theorem \ref{thm::forw-back}] We show the statement for the general case $[I, C]$. The proof of the special cases is similar/can be reduced from the general case. Let us denote the operator as in \eqref{eq::txi-def} for the usual process by $T_\xi$, and $T_b$ for the backward version. Our goal is to show that the conditions of Claim \ref{cl::opdom} are satisfied and $T_\xi$ dominates $T_b$. The key is Jensen's inequality.
Indeed, for a fixed $t>0$ let us introduce the notation $A(t):=\{(i,c): 0\le i\le t, i\le c\le t\}$ for the triangle in the $(i,c)$ plane with corners $(0,0), (0,t), (t,t)$, and $B(t):=\{(i,c): 0\le i \le t, c>t\}$ for the infinite rectangle with corners $(0,t), (t,t), (0,\infty), (t, \infty)$. Finally, let us denote the complement of $A(t)\cup B(t)$ in the first quadrant by $C(t)$.
With this notation, using \eqref{eq::incucont} and the fact that $h_X(1)=1$,
\be \label{eq::t-incu-cont}\ba \big(T_\xi f \big)(t)&= \iint_{(i,c)\in A(t)} h_X \big( \Ev[f(t-\sigma_i) \ind_{i<\sigma_i<c} +\ind_{\sigma_i>c\cup \sigma_i<i} ]\big) F_{[I,C]}(\mathrm di, \mathrm dc) \\
&+\iint_{(i,c)\in B(t)} h_X\big(\Ev[ f(t-\sigma_i) \ind_{i\le \sigma_i\le t} +\ind_{\sigma_i>t\cup \sigma_i<i} ]\big)  F_{[I,C]}(\mathrm di, \mathrm dc)\\
&+ \iint_{i,c \in C(t)} h_X(1) F_{[I,C]}(\mathrm di, \mathrm dc).\ea
\ee
Note that $h_X$ is a generating function and hence convex. As a result, exchanging the integration with $h_X$, we have
\be\label{eq::t-incu-cont-b} \ba\big(T_\xi f\big)(t) &\ge 
h_X \Bigg(\iint_{(i,c)\in A(t)}  \Ev[f(t-\sigma_i) \ind_{i\le \sigma_i\le c} +\ind_{\sigma_i>c\cup \sigma_i<i} ] F_{[I,C]}(\mathrm di, \mathrm dc)  \Bigg. \\
&\quad+\iint_{(i,c)\in B(t)} \Ev[ f(t-\sigma_i) \ind_{i \le \sigma_i<t} +\ind_{\sigma_i>t \cup \sigma_i<i} ]  F_{[I,C]}(\mathrm di, \mathrm dc)\\
&\quad+ \iint_{i,c \in C(t)} 1 F_{[I,C]}(\mathrm di, \mathrm dc) \Bigg)=\big(T_b f\big) (t).\ea
\ee
A direct application of Claim \ref{cl::opdom} finishes the proof. 
\end{proof}

Now we are ready to prove Theorems \ref{thm::comp4} and \ref{thm::comp-tail}.
\begin{proof}[Proof of Theorem \ref{thm::comp4}]
We use the same notation as in formulas \eqref{eq::t-incu-cont} and \eqref{eq::t-incu-cont-b}. 
Note that for every $i, c< t_0$, and every function $f: \R\to [0,1]$ we have that the arguments of the generating function $h_X(\cdot)$ are decreased when switching to the distribution $F_{\sigma'}$.
Indeed, since $F_{\sigma'}(\mathrm dt)\ge F_{\sigma}(\mathrm dt)$ and $1-f(t-x)\ge 0$, we have for instance
\[ \ba \Ev[f(t-\sigma_i) \ind_{i<\sigma_i<c} +\ind_{\sigma_i>c\cup \sigma_i<i} ]&=1- \int_i^c 1-f(t-x) F_\sigma (\mathrm dx)\\
&\ge1- \int_i^c 1-f(t-x) F_{\sigma'} (\mathrm dx)\\
&=\Ev[f(t-\sigma_i') \ind_{i<\sigma_i'<c} +\ind_{\sigma_i'>c\cup \sigma_i'<i} ]  \ea  \]
by the monotonicity of $h_X$, the first and second line of \eqref{eq::t-incu-cont} and \eqref{eq::t-incu-cont-b} are both decreased. 
Hence, $\big(T_\xi f\big)(t)\ge \big(T_{\xi'} f\big)(t)$ holds for all $t\in [0, t_0]$. The theorem follows by an application of Claim \ref{cl::opdom}.
\end{proof}

\begin{proof}[Proof of Theorem \ref{thm::comp-tail}]
The assumption that $h_{X'}(s)\le h_{X}(s)$ for all $s\in (s_0, 1]$ implies that  $\big(T_\xi f\big)(t) \ge \big(T_{\xi'} f \big)(t)$ for all small enough $t>0$. Indeed, in formulas \eqref{eq::t-incu-cont} and \eqref{eq::t-incu-cont-b} the arguments of the function $h_X(\cdot)$ remain the same at every occurence, we only have to modify $h_X$ to $h_{X'}$ on each location to obtain $T_{\xi'} f$. $T_{\xi'} f \le T_\xi f$ thus holds if we show that the arguments of $h_X$ tend to $1$ as $t\to 0$. This is immediate from the fact that the expectation of the indicators $\ind_{i<\sigma_i<c}$ and $i\le \sigma_i\le t$ tend to $0$ as $t \to 0$ (since $c<t$ holds as well), and that $f \in [0,1]$.
\end{proof}

\subsection{Effect of contagious periods and incubation times}\label{s::effect1}
We have already seen in Theorem \ref{thm::forw-back} that the explosion of a forward process with contagious periods and/or incubation times implies the explosion of the backward process and also the explosion of the simple age-dependent process. 

In what follows, we investigate the opposite direction: under what circumstances can an age-dependent BP be stopped by superimposing incubation times/contagious periods on it? 
First, we make use of Claim \ref{cl::testfunction} to answer the question negatively for BP-s with contagious periods: as long as the total progeny follows a power-law distribution with exponent $\alpha \in (0,1)$, no explosive age-dependent BP becomes conservative by superimposing contagious periods on it. We also show that explosion is quite robust in terms of the total offspring distribution as well: the explosion of a process with birth-times following distribution $F_\sigma$ cannot be stopped by changing the offspring distribution $X$ to have lighter tails, as long as its power-law exponent stays in the interval $[0,1)$. We mention that the first forms of these theorems were developed by Gulikers and the author and appeared in the Master thesis of Gulikers \cite{Gul14}. The proofs provided here are different.
First a definition:
\begin{definition}[Power-law tail behavior] We say that the random variable has regularly varying tail with power-law exponent $\al\in (0,1)$ if there exists a $K>0$ and a function $L(\cdot)$ that varies slowly at infinity\footnote{For the definition, see below Definition \ref{def::slowvar}.} such that for all $x\ge K$
\be\label{eq::power-law-alpha} \Pv\left(X \ge x \right)=L(x)/ x^\al.\ee
\end{definition}
Karamata's theorem \cite{Kar30} or \cite[Theorem 1.7.1]{BiGoTe87} tells us that in this case the generating function of $X$ satisfies: 
\begin{proposition} Let $X$ be a random variable with tail behavior as in \eqref{eq::power-law-alpha}. Then there exists an $s_0\in[0,1)$ such that the generating function of $X$ satisfies for all $s\in [s_0, 1)$
\be\label{eq::gen-alpha} h_X (s) = 1- (1-s)^\al L\big(\tfrac{1}{1-s}\big).\ee
\end{proposition}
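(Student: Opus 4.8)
The statement is Karamata's Tauberian theorem for power series in disguise (the ``Abelian'' direction), and the plan is to put $1-h_X$ into the form where that theorem applies, namely a power series with nonnegative coefficients. Write $h_X(s)=\sum_{n\ge0}\Pv(X=n)s^n$ and use the identity $1-s^n=(1-s)\sum_{j=0}^{n-1}s^j$ together with Fubini to obtain
\[ 1-h_X(s)=\sum_{n\ge1}\Pv(X=n)(1-s^n)=(1-s)\sum_{k\ge0}\Pv(X\ge k+1)\,s^k . \]
Thus $A(s):=(1-h_X(s))/(1-s)=\sum_{k\ge0}b_k s^k$ with $b_k:=\Pv(X\ge k+1)\ge0$ is a power series convergent on $[0,1)$, and it suffices to determine the behaviour of $A(s)$ as $s\uparrow1$.

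Next I would compute the partial sums of the coefficients: $\sum_{k=0}^{n-1}b_k=\sum_{j=1}^n\Pv(X\ge j)=\Ev[X\wedge n]$. By \eqref{eq::power-law-alpha} the sequence $j\mapsto\Pv(X\ge j)$ equals $j^{-\alpha}L(j)$ for $j\ge K$, hence is regularly varying of index $-\alpha$ with $-\alpha\in(-1,0)$; Karamata's theorem on sums of regularly varying sequences \cite[Ch.\ 1]{BiGoTe87} then gives $\sum_{j=1}^n\Pv(X\ge j)\sim n^{1-\alpha}L(n)/(1-\alpha)$ as $n\to\infty$. This is precisely where $0<\alpha<1$ is used: $1-\alpha\in(0,1)$ makes the sum diverge and the partial sums regularly varying of positive index $1-\alpha$. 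Applying Karamata's Tauberian theorem for power series \cite[Thm.\ 1.7.1]{BiGoTe87} with $\rho=1-\alpha$, and matching the normalising constant via $\Gamma(2-\alpha)=(1-\alpha)\Gamma(1-\alpha)$, yields $A(s)\sim\Gamma(1-\alpha)\,L\bigl(\tfrac1{1-s}\bigr)(1-s)^{-(1-\alpha)}$, and therefore
\[ 1-h_X(s)=(1-s)A(s)\ \sim\ \Gamma(1-\alpha)\,L\bigl(\tfrac1{1-s}\bigr)(1-s)^{\alpha}\qquad(s\uparrow1). \]

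To conclude, set $\widetilde L(y):=y^{\alpha}\bigl(1-h_X(1-1/y)\bigr)$ for $y\ge1$. The displayed asymptotics say $\widetilde L(y)\sim\Gamma(1-\alpha)\,L(y)$ as $y\to\infty$, so $\widetilde L$ is slowly varying; and by construction $h_X(s)=1-(1-s)^{\alpha}\,\widetilde L\bigl(1/(1-s)\bigr)$ for every $s\in(0,1)$, which is exactly \eqref{eq::gen-alpha} with $L$ replaced by the slowly varying function $\widetilde L$ and with any choice of $s_0\in[0,1)$. The single point that needs care is purely notational rather than analytic: the $L$ appearing in \eqref{eq::gen-alpha} is \emph{not} literally the $L$ of \eqref{eq::power-law-alpha}, only asymptotically proportional to it (by the factor $\Gamma(1-\alpha)$), so the equality in \eqref{eq::gen-alpha} must be read with $L$ understood as this redefined slowly varying function. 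Beyond invoking the two Karamata statements with the correct indices there is no genuine obstacle.
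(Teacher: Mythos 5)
Your proof is correct and follows exactly the route the paper intends: the paper simply cites ``Karamata's theorem, \cite[Theorem 1.7.1]{BiGoTe87}'' without writing out a derivation, and what you have done is spell out that derivation. The reduction $1-h_X(s)=(1-s)\sum_{k\ge0}\Pv(X\ge k+1)s^k$, the identification of the partial sums with $\Ev[X\wedge n]$, the application of Karamata's theorem on sums to get $\Ev[X\wedge n]\sim n^{1-\alpha}L(n)/(1-\alpha)$, and the Tauberian step with $\rho=1-\alpha$ and the $\Gamma(2-\alpha)=(1-\alpha)\Gamma(1-\alpha)$ bookkeeping are all standard and handled correctly. Your closing remark is also the right reading of the proposition: Karamata only gives $1-h_X(s)\sim\Gamma(1-\alpha)(1-s)^\alpha L(1/(1-s))$, so the equality \eqref{eq::gen-alpha} can only be literal once one replaces $L$ by the slowly varying function $\widetilde L(y)=y^\alpha(1-h_X(1-1/y))$, which differs from the original $L$ by the factor $\Gamma(1-\alpha)$ and by lower-order terms; the paper silently reuses the symbol $L$ for this new slowly varying function. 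In short: same approach as the paper, just with the omitted details supplied, and a correct clarification of a notational abuse.
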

\begin{theorem}[Contagious periods do not matter `at all']\label{thm::cont-nomatter2}
Let $(h_X, F_\sigma, 0, \infty)$ be an age dependent branching process with offspring distribution $X$ as in \eqref{eq::power-law-alpha} for some $\al\in (0,1)$ and let $F_{[I,C]}$ be the joint distribution function of the incubation time and the contagious period $I, C$, and $F_I$ the marginal distribution of $I$. Let us further require the existence of  $t_0, \delta>0$ such that the conditional distribution $\Pv(C>t | I=i)>\delta$ for all $t>i$ with $i, t\in[0, t_0]$.   
Then, the explosivity of the process $(h_X, F_\sigma, F_I, \infty)$ implies the explosivity of the processes $(h_X, F_\sigma, F_{[I,C]})$ and $(h_X, F_\sigma, F_{[I,C]})^b$. 
\end{theorem}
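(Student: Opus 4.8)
The goal is to show that if the epidemic with incubation times alone, $(h_X, F_\sigma, F_I, \infty)$, is explosive, then attaching a contagious period $C$ on top of it cannot destroy the explosion. The natural route is via Claim~\ref{cl::testfunction} (the test-function criterion) together with Remark~\ref{rem::qu-xi}. Write $\xi$ for the reproduction process of $(h_X, F_\sigma, F_I, \infty)$ and $\xi'$ for that of $(h_X, F_\sigma, F_{[I,C]})$. Since $(h_X, F_\sigma, F_I, \infty)$ is explosive, Lemma~\ref{lem::fixpoint} gives a non-increasing $\phi \not\equiv 1$ in $\Omega_{[0,1]}$ with $\phi = T_\xi \phi$; equivalently, $\eta := 1 - \phi \not\equiv 0$ satisfies $\eta = Q_\xi \eta$. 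By Claim~\ref{cl::fastexp} we may assume $\phi(t) < 1$, i.e.\ $\eta(t) > 0$, for all $t > 0$. The plan is to feed a suitably scaled version of $\eta$ into the operator $Q_{\xi'}$ as a test function on a short interval $[0, t_0]$ and verify $f \le Q_{\xi'} f$ there.

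The key structural point is the comparison of the two operators on a short interval. Using the explicit form \eqref{eq::t-incu} for the incubation-only operator and \eqref{eq::t-incu-cont} for the $[I,C]$ operator, the only difference is that in the latter, for a child-time $\sigma_i$ to count it must additionally satisfy $\sigma_i \le C$. On the interval $[0, t_0]$, the hypothesis $\Pv(C > t \mid I = i) > \delta$ for all $i \le t \le t_0$ means that, conditionally on $I = i$, with probability at least $\delta$ the whole window $[i, t_0]$ lies inside the contagious period, so on that event $\xi'$ restricted to $[0, t_0]$ \emph{equals} the corresponding restriction of $\xi$. Concretely, one should show that for any non-increasing $f : [0, t_0] \to [0,1]$ and all $t \in [0, t_0]$,
\[
\bigl(Q_{\xi'} f\bigr)(t) \ \ge\ \delta \cdot \bigl(\text{the contribution one gets from } Q_\xi \text{ on the event } \{C > t_0\} \text{ given } I\bigr),
\]
and then exploit the power-law form \eqref{eq::gen-alpha} of $h_X$: because $h_X(1 - u) = 1 - u^\alpha L(1/u)$ with $\alpha \in (0,1)$, the function $u \mapsto 1 - h_X(1-u)$ is, near $0$, much larger than linear in $u$ (it dominates $c\, u^\alpha$). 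Hence multiplying the ``effective offspring contribution'' by a constant factor $\delta < 1$ costs only a factor $\delta$ inside $h_X$'s complement, which is absorbed if we first shrink the test function by an appropriate constant. Precisely, set $f(t) := \kappa\, \eta(t)$ for a small constant $\kappa \in (0,1)$ and a small $t_0$; one checks $1 - h_X(1 - \kappa u) \ge \kappa'\,(1 - h_X(1-u))$ fails in general but $1 - h_X(1 - \kappa u) \ge \kappa^\alpha (1-o(1))(1-h_X(1-u))$ holds, and since $\kappa^\alpha > \kappa$ we can arrange $\delta \kappa^\alpha \ge \kappa$ by taking $\kappa$ small. This is exactly the mechanism that makes the contagious period irrelevant.

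So the ordered steps are: (1) extract $\eta = 1 - \phi > 0$ non-increasing with $\eta = Q_\xi \eta$ from explosivity of $(h_X, F_\sigma, F_I, \infty)$; (2) fix $t_0, \delta$ from the hypothesis and, on $[0, t_0]$, lower-bound $(Q_{\xi'} f)(t)$ by restricting to the event that the contagious window covers $[I, t_0]$, picking up the factor $\delta$ and reducing the inner expression to the incubation-only form; (3) use the regularly-varying form \eqref{eq::gen-alpha} of $h_X$ to show that pre-scaling $\eta$ by a small constant $\kappa$ compensates the lost factor $\delta$, so that $f := \kappa\, \eta|_{[0,t_0]}$ satisfies $f \le Q_{\xi'} f$ on $[0, t_0]$ with $f \not\equiv 0$; (4) invoke the $Q$-version of Claim~\ref{cl::testfunction} to conclude $(h_X, F_\sigma, F_{[I,C]})$ is explosive; (5) deduce explosivity of the backward version $(h_X, F_\sigma, F_{[I,C]})^b$ — here the cleanest argument is that the backward operator $T_b$ dominates the forward operator $T_{\xi'}$ pointwise (this is precisely inequality \eqref{eq::t-incu-cont-b}, obtained from Jensen as in Theorem~\ref{thm::forw-back}), so by Claim~\ref{cl::opdom} explosivity of the forward process with $[I,C]$ transfers to the backward one.

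**Main obstacle.** The delicate point is Step~(3): controlling how the constant factor $\delta$ lost to the contagious-period event propagates through the \emph{nonlinear} generating function $h_X$. If $h_X$ were linear (finite mean), shrinking the offspring contribution by $\delta$ would shrink the test-function inequality by $\delta$ and nothing would be gained; it is genuinely the infinite-mean, power-law behavior \eqref{eq::power-law-alpha}--\eqref{eq::gen-alpha} with $\alpha \in (0,1)$ — the fact that $1 - h_X(1-u) \gg u$ as $u \to 0^+$ — that lets a small constant pre-factor on the test function be ``amplified'' by $h_X$ to beat $\delta$. Making the slowly-varying function $L$ bookkeeping precise (uniformity of the $o(1)$ terms, choice of $t_0$ small enough that all relevant arguments of $h_X$ lie in the regime $[s_0,1)$) is the part that needs care, though it is routine once the mechanism is identified.
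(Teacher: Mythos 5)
Your proposal follows the paper's proof essentially step by step: extract $\eta = 1-\phi$ from the fixed point of the incubation-only operator $Q_I$, lower-bound $(Q_{[I,C]} f)(t) \ge \delta (Q_I f)(t)$ on $[0,t_0]$ by restricting the $[I,C]$-integral to the region where $c>t$ and using the conditional-probability hypothesis, then use the regularly-varying form $1-h_X(1-u) \asymp u^\alpha L(1/u)$ to get $Q_I(\kappa\eta) \gtrsim \tfrac12\kappa^\alpha\eta$ on a shrunken interval, and absorb $\delta$ by choosing $\kappa$ small enough that $\tfrac{\delta}{2}\kappa^{\alpha-1}\ge 1$; the backward process then comes from Jensen as in Theorem~\ref{thm::forw-back}. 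One small slip in Step (5): the dominance runs the other way — inequality \eqref{eq::t-incu-cont-b} reads $T_{\xi'} f \ge T_b f$, i.e.\ the \emph{forward} operator dominates the \emph{backward} one, which is precisely what Claim~\ref{cl::opdom} needs to transfer explosivity from the forward $[I,C]$-process to its backward version.
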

\begin{remark}\normalfont
Note that the conditions of Theorem \ref{thm::cont-nomatter2} are satisfied in the following natural cases: 

(1) $I$ and $C$ are independent random variables with $F_C(t_0)<1$ for some $t_0>0$, with $\delta:=1-F_C(t_0)$. In this case, explosion of the process $(h, F_\sigma, F_I, \infty)$ implies the explosion of $(h, F_\sigma, F_I \times F_C)$. 

(2) $C\ {\buildrel d \over =}\ I+Y$ for some nonnegative random variable $Y$ with $\delta=\Pv(Y > 0)>0$ with $I, Y$ independent. That is, when the starting point $I$ and the length of the interval $[I, C]$ are independent.

\end{remark}
An immediate corollary is the following:
\begin{corollary}[Contagious periods do not matter]\label{thm::cont-nomatter}
Let $(h_X, F_\sigma, 0, \infty)$ be an age dependent branching process with offspring distribution as in \eqref{eq::power-law-alpha} for some $\al\in (0,1)$ and let $F_C$ be the distribution of the contagious period $C$ so that $C\not \equiv 0$, that is, for some $t_0>0$ let $F_C(t_0)<1$. Then, the explosivity of the process $(h_X, F_\sigma, 0, \infty)$ implies the explosivity of the processes $(h_X, F_\sigma, 0, F_C)$ and $(h_X, F_\sigma, 0, F_C)^b$.
\end{corollary}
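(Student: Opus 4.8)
The plan is to obtain this statement as the special case $I \equiv 0$ of Theorem \ref{thm::cont-nomatter2}, so essentially all the work consists in checking that the hypotheses of that theorem are met once the incubation time is absent.

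First I would record the trivial identifications. When the incubation time is degenerate at $0$, its marginal $F_I$ is the Dirac mass at $0$, so the process $(h_X, F_\sigma, F_I, \infty)$ is literally the age-dependent branching process $(h_X, F_\sigma, 0, \infty)$, which is explosive by hypothesis; and $F_{[I,C]}$ reduces to the pair $(0, F_C)$, so $(h_X, F_\sigma, F_{[I,C]})$ and $(h_X, F_\sigma, F_{[I,C]})^b$ are exactly $(h_X, F_\sigma, 0, F_C)$ and $(h_X, F_\sigma, 0, F_C)^b$. Moreover $X$ has the regularly varying tail \eqref{eq::power-law-alpha} with $\al \in (0,1)$ by assumption, which is precisely the offspring condition required in Theorem \ref{thm::cont-nomatter2}.

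Next I would verify the conditional lower bound on the contagious period. Since conditioning on $I = 0$ is vacuous, the requirement of Theorem \ref{thm::cont-nomatter2} becomes: there exist $t_0, \delta > 0$ with $\Pv(C > t) > \delta$ for every $t \in (0, t_0]$. By assumption there is a $t_0 > 0$ with $F_C(t_0) < 1$, i.e.\ $\Pv(C > t_0) = 1 - F_C(t_0) > 0$; since $t \mapsto \Pv(C > t)$ is non-increasing, $\Pv(C > t) \ge \Pv(C > t_0)$ for all $t \le t_0$, so $\delta := \tfrac{1}{2}\bigl(1 - F_C(t_0)\bigr) > 0$ does the job. The only bookkeeping point is that Theorem \ref{thm::cont-nomatter2} asks for the inequality only at $t > i = 0$, so nothing needs to be checked at $t = 0$, and one may freely shrink $t_0$.

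With both hypotheses of Theorem \ref{thm::cont-nomatter2} in place, its conclusion gives at once the explosivity of $(h_X, F_\sigma, 0, F_C)$ and of $(h_X, F_\sigma, 0, F_C)^b$, which is exactly the assertion. There is no genuine obstacle in this corollary: all the substantive content — in particular the test-function argument for power-law offspring and the passage to the backward process — is already carried out in the proof of Theorem \ref{thm::cont-nomatter2}, and the present statement is obtained by specializing its parameters. (This specialization also falls under item (1) of the remark following Theorem \ref{thm::cont-nomatter2}, since the constant $I \equiv 0$ is trivially independent of $C$.)
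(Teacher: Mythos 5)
Your proof is correct and is exactly the approach the paper takes: the paper labels this result an ``immediate corollary'' of Theorem~\ref{thm::cont-nomatter2} and records the specialization $I\equiv 0$ as case~(1) of the remark following that theorem. Your verification that $\Pv(C>t\mid I=0)=\Pv(C>t)\ge 1-F_C(t_0)>\delta$ for $t\le t_0$ (using $\delta=\tfrac12(1-F_C(t_0))$ to keep the inequality strict) and the identification $(h_X,F_\sigma,F_I,\infty)=(h_X,F_\sigma,0,\infty)$ when $I\equiv0$ is precisely the bookkeeping the paper leaves to the reader.
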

Below in Corollary \ref{cor::power-law-change} we show that the precise exponent $\al$ of the degree distribution also does not matter, as long as a \emph{lower bound} $\Pv(X\ge x)\ge c/x^\al$ hold for some $\al<1$. As a result, the statement of Corollary \ref{thm::cont-nomatter} can be strengthened to hold for \emph{plump distributions} as in Definition \ref{def::plump1}. 

\begin{proof}[Proof of Theorem \ref{thm::cont-nomatter2}] Let us denote the operator corresponding to $(h_X, F_\sigma, F_{[I,C]})$ by $T_{[I,C]}$ and the operator corresponding to $(h_X, F_\sigma, F_I, 0)$ by $T_I$, and define as in Remark \ref{rem::qu-xi}  $\big(Q_{[I,C]} f\big)(t):=1- (T_{[I,C]} (1-f))(t), \big(Q_I f\big)(t):=1- (T_I (1-f))(t)$. With notation as in the proof of Theorem \ref{thm::forw-back}, it is elementary to check using \eqref{eq::t-incu} and \eqref{eq::t-incu-cont} that for a non-decreasing function $f$ with values in $[0,1]$, 
\be\label{eq::qi-1} \big(Q_{I}f\big)(t)  = \int_0^t \Big( \int_i^t f(t-x) F_\sigma (\mathrm dx) \Big)^\al  L\Big(1/\int_i^t f(t-x) F_\sigma (\mathrm dx)\Big )F_{I} (\mathrm di)
\ee
and
\[ \ba \big(Q_{[I,C]}f\big)(t) & = \iint\limits_{(i,c)\in A(t)} \Big( \int_i^c f(t-x) F_\sigma (\mathrm dx) \Big)^\al L\Big(1/\int_i^c f(t-x) F_\sigma (\mathrm dx)\Big )F_{[I,C]} (\mathrm di, \mathrm dc)\\
& \quad+ \iint\limits_{(i,c)\in B(t)} \Big( \int_i^t f(t-x) F_\sigma (\mathrm dx) \Big)^\al L\Big(1/\int_i^t f(t-x) F_\sigma (\mathrm dx)\Big )F_{[I,C]} (\mathrm di, \mathrm dc)\\
& \ge  \int_0^t \Pv(C>t| I=i ) \Big( \int_i^t f(t-x) F_\sigma (\mathrm dx) \Big)^\al L\Big(1/\int_i^t f(t-x) F_\sigma (\mathrm dx)\Big ) F_I(\mathrm di),
\ea \]
where we obtained the last line by only considering a lower bound on $B(t)$, since there the integrand does not depend on the value of $c$.
Since we assumed that $\Pv(C>t | I=i)>\delta$ for all $i<t,  t\in[0, t_0]$, we obtain for $t\in [0,t_0]$ that
\[ \big(Q_{[I,C]}f\big)(t) \ge \delta \big(Q_{I}f\big)(t).\]
Let us now assume that the process $(h_X, F_\sigma, F_I, \infty)$ explodes. Lemma \ref{lem::fixpoint} implies that $\phi(t)=\Pv(D_I(t)< \infty)\not \equiv 1$ solves
$\phi(t)=T_I \phi (t)$ and hence $\eta(t):=1-\phi(t) \not \equiv 0$ is a non-decreasing function that solves $\eta(t) =(Q_I \eta)(t)$.
Note that if the slowly varying function in \eqref{eq::qi-1} would not be present, then we would have $\Big(Q_I b\eta\Big)= b^\al \eta(t)$ for any constant $b>0$. 
We thus would like a lower bound on $\Big(Q_I b\eta\Big)$. For this we note that the argument of the slowly varying function is the smallest if $i=0$.
 So let us set for a fixed $c>0$, 
 \[ t_1(b):=\max\left\{ s: \frac{L\left(1/ (b\int_0^s \eta(t-x)  F_\sigma(\mathrm dx))\right)}{L\left(1/ \int_0^t \eta(t-x)  F_\sigma(\mathrm dx)\right)} \ge 1/2 \right\}, \]
 that is, by the fact that $\eta$ is non-decreasing, we have that for all $t<t_1(b)$, the ratio of the slowly varying functions
 \[ \frac{L\left(1/ (b\int_i^t \eta(t-x)  F_\sigma(\mathrm dx))\right)}{L\left(1/ \int_i^t \eta(t-x)  F_\sigma(\mathrm dx)\right)} \ge 1/2 \]
 for all $i, t<t_1(b)$. Then set $t_2(b):=\min\{t_0, t_1(b)\}.$
We make use Remark \ref{rem::qu-xi} and show that for a constant $b$, $b \eta(t)\not\equiv0$ satisfies $b \eta(t)\le \big(Q_C b\eta\big)(t)$ on $[0, t_2(b)]$ and hence the process is explosive for any $C\not\equiv 1$. 
Indeed, for $t\in [0, t_2(b)]$,
\[ \big(Q_{[I,C]} b\eta\big)(t) \ge  \delta\big(Q_I b\eta\big)(t)  \ge  \frac\delta2 b^\al \eta(t).\]
The right hand side is at most $b \eta(t)$ whenever $\frac12 \delta b^{\al-1}\ge 1$. This can be satisfied by choosing $b$ small enough. Hence, the forward process explodes by Claim \ref{cl::testfunction}. The explosion of the backward process $(h_X, F_\sigma, F_{[I,C]})^b$ then follows by Theorem \ref{thm::forw-back}.
\end{proof}

The next theorem tells us that an explosive age-dependent BP can be stopped from explosion by superimposing a conservative incubation time on it.
\begin{theorem}[Incubation times do matter]\label{thm::incu-matter}
Let $(h_X, F_\sigma, F_I, \infty)$ be an epidemic model with incubation times. If at least one of the age-dependent processes $(h_X, F_\sigma, 0, \infty), (h_X, F_I, 0, \infty)$ is conservative, then so is $(h_X, F_\sigma, F_I, \infty)$ and $(h_X, F_\sigma, F_I, \infty)^b$. In other words, the explosivity of both $(h_X, F_\sigma, 0, \infty)$ and $(h_X, F_I, 0, \infty)$ is necessary for the explosivity of $(h_X, F_\sigma, F_I, \infty)$.
\end{theorem}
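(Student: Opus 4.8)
The plan is to deduce the entire statement from a single operator–domination fact together with Claim~\ref{cl::opdom}. Write $T_\xi$ for the operator \eqref{eq::t-incu} of the forward epidemic model $(h_X,F_\sigma,F_I,\infty)$, write $T_\xi^b$ for the operator of its backward version $(h_X,F_\sigma,F_I,\infty)^b$, and write $T_\sigma$, $T_I$ for the operators \eqref{eq::t-agedep} of the age-dependent processes $(h_X,F_\sigma,0,\infty)$ and $(h_X,F_I,0,\infty)$. I will show that for every non-increasing $f\colon[0,t_0]\to[0,1]$ and every $t\ge 0$,
\[ (T_\xi f)(t)\ \ge\ (T_\sigma f)(t),\qquad (T_\xi f)(t)\ \ge\ (T_I f)(t), \]
together with the two analogous inequalities for $T_\xi^b$. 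Granting these, Claim~\ref{cl::opdom} gives that conservativeness of $(h_X,F_\sigma,0,\infty)$ forces conservativeness of $(h_X,F_\sigma,F_I,\infty)$ and of $(h_X,F_\sigma,F_I,\infty)^b$ (via the first inequality for each version), while conservativeness of $(h_X,F_I,0,\infty)$ does the same (via the second inequality). Since by hypothesis at least one of the two age-dependent processes is conservative, both epidemic processes are then conservative, which is the claim.

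\emph{Forward version.} From \eqref{eq::t-incu}, the bound $T_\xi\ge T_\sigma$ is immediate: enlarging the inner integration range from $[i,t]$ to $[0,t]$ only decreases the (nonnegative) argument of the non-decreasing function $h_X$, so every value of the outer integrand is at least $(T_\sigma f)(t)$, and hence
\[ (T_\xi f)(t)\ \ge\ \int_0^t (T_\sigma f)(t)\,F_I(\mathrm di)+1-F_I(t)\ =\ F_I(t)\,(T_\sigma f)(t)+1-F_I(t)\ \ge\ (T_\sigma f)(t), \]
the last step using $(T_\sigma f)(t)\le 1$. For $T_\xi\ge T_I$ I use that $f$ is non-increasing: then $1-f(t-x)\le 1-f(t-i)$ for $x\in[i,t]$, so $\int_i^t(1-f(t-x))F_\sigma(\mathrm dx)\le (1-f(t-i))\int_i^t F_\sigma(\mathrm dx)\le 1-f(t-i)$, and monotonicity of $h_X$ bounds the outer integrand below by $h_X(f(t-i))$. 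This gives
\[ (T_\xi f)(t)\ \ge\ \int_0^t h_X(f(t-i))\,F_I(\mathrm di)+1-F_I(t)\ =\ \mathbb{E}\big[h_X\big(f(t-I)\mathbf{1}_{I\le t}+\mathbf{1}_{I>t}\big)\big], \]
and since $h_X$ is convex (being a probability generating function) Jensen's inequality bounds the right-hand side below by $h_X\big(\mathbb{E}[f(t-I)\mathbf{1}_{I\le t}+\mathbf{1}_{I>t}]\big)=h_X\big(1-\int_0^t(1-f(t-i))F_I(\mathrm di)\big)=(T_I f)(t)$.

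\emph{Backward version.} Conditioning on $X$ and using that in \eqref{eq::incucont-back} with $C_i\equiv\infty$ the pairs $(\sigma_i,I_i)$ are i.i.d.\ and independent of $X$, one obtains
\[ (T_\xi^b f)(t)\ =\ h_X\Big(1-\mathbb{E}\big[(1-f(t-\sigma))\mathbf{1}_{I\le\sigma\le t}\big]\Big),\qquad (\sigma,I)\ \text{independent}. \]
Comparing with $(T_\sigma f)(t)=h_X\big(1-\mathbb{E}[(1-f(t-\sigma))\mathbf{1}_{\sigma\le t}]\big)$: since $\mathbf{1}_{I\le\sigma\le t}\le\mathbf{1}_{\sigma\le t}$ and $1-f\ge 0$, the expectation inside only shrinks, and monotonicity of $h_X$ yields $T_\xi^b f\ge T_\sigma f$. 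Comparing with $(T_I f)(t)=h_X\big(1-\mathbb{E}[(1-f(t-I))\mathbf{1}_{I\le t}]\big)$: conditioning on $I=i$, it suffices to prove $\mathbb{E}[(1-f(t-\sigma))\mathbf{1}_{i\le\sigma\le t}]\le (1-f(t-i))\mathbf{1}_{i\le t}$, which for $i>t$ is $0\le 0$ and for $i\le t$ follows because the integrand is at most $1-f(t-i)$ on $\{i\le\sigma\le t\}$ (monotonicity of $f$) while $\int_i^t F_\sigma(\mathrm dx)\le 1$; integrating over $I$ gives $T_\xi^b f\ge T_I f$.

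\emph{Main obstacle.} All four comparisons are short, but the forward bound $T_\xi\ge T_I$ is the only genuinely non-obvious one: dropping children or enlarging a domain is not enough, and one must first collapse the inner $\sigma$-integral into the single quantity $1-f(t-i)$ by exploiting monotonicity of the test function, and only then invoke convexity of $h_X$ via Jensen in the ``averaging over $I$'' direction. Probabilistically, a convexity argument is needed only for the forward process because there all children of one individual share the common constraint $\sigma_j\ge I$; in the backward process the constraints $\sigma_j\ge I_j$ are independent across edges, so the backward epidemic tree is literally a thinning of the age-dependent BP with birth-times $I$, and an inclusion of trees (via Lemma~\ref{lemma::rays}) would give the same conclusion.
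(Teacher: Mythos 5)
Your proof is correct, and it takes a genuinely different route from the paper. The paper proves the theorem \emph{only} for the backward process $(h_X,F_\sigma,F_I,\infty)^b$, by the one-line coupling observation that the backward reproduction process $\xi(t)=\sum_{i\le X}\ind_{\sigma_i\le t}\ind_{\sigma_i\ge I_i}$ is pointwise dominated in the sense of Definition~\ref{def::point-proc-dom} by both age-dependent processes $\sum_{i\le X}\ind_{\sigma_i\le t}$ and $\sum_{i\le X}\ind_{I_i\le t}$ (the second because $\sigma_i\ge I_i$ and $\sigma_i\le t$ force $I_i\le t$), so Theorem~\ref{thm::comparison-general} applies; the forward case is then obtained \emph{indirectly} by invoking Theorem~\ref{thm::forw-back}, whose Jensen argument is where convexity of $h_X$ enters. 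You instead establish all four operator inequalities of Claim~\ref{cl::opdom} head-on: your two backward bounds are an analytic reformulation of the paper's coupling (no convexity needed, as you correctly remark), your forward bound $T_\xi\ge T_\sigma$ is immediate, and your forward bound $T_\xi\ge T_I$ is the genuinely new step, where you collapse the inner $\sigma$-integral to $f(t-i)$ using monotonicity of $f$ before applying Jensen in the $I$-variable. Both proofs ultimately use convexity for the forward process, but you apply it directly to the target operator rather than detouring through the forward-vs-backward comparison, which makes the proof self-contained modulo Claim~\ref{cl::opdom} and avoids depending on Theorem~\ref{thm::forw-back}. The paper's route is shorter because it reuses existing machinery; yours better isolates exactly where and why convexity is needed. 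One small stylistic note: your forward bound $T_\xi\ge T_\sigma$ needs $f\le 1$ in the last step, and Claim~\ref{cl::opdom} already restricts to $f\in[0,1]$, so this is fine, but it is worth flagging that this particular inequality does not require monotonicity of $f$, while $T_\xi\ge T_I$ does.
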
 
\begin{proof} We show that the backward process $(h_X, F_\sigma, F_I, \infty)^b$ is conservative if any of the processes  $(h_X, F_\sigma, 0, \infty)$ or $(h_X, F_I, 0, \infty)$ is conservative. Then, Theorem \ref{thm::forw-back} shows that $(h_X, F_\sigma, F_I, \infty)$ is also conservative.
Indeed, consider the three processes 
\[  \xi(t)=\sum_{i=1}^X \ind_{ \sigma_i \le t } \ind_{\sigma_i \in [I_i,\infty]},\quad   \xi'(t)=\sum_{i=1}^X \ind_{ \sigma_i \le t }, \quad  \xi''(t)=\sum_{i=1}^X \ind_{ I_i \le t }\]
and note that both $|\xi(t)|\le |\xi'(t)|$ and $|\xi(t)|\le |\xi''(t)|$ holds for all (not just small enough) $t>0$. As a result, $\xi$ is stochastically dominated by both $\xi'$ and $\xi''$ around the origin. Theorem \ref{thm::comparison-general} finishes the proof.
\end{proof}
\section{Min-summability and its consequences}\label{s:minsum}
A powerful tool to analyse the explosion of age-dependent branching processes is the so-called \emph{min-summability}, a criterion developed by Amini \emph{et al} \cite{AmiDev13}. Here we adapt the needed definitions and the main theorem from \cite{AmiDev13} to our notations. 
\begin{definition}Let $(a_k)_{k\in \N}$ be a sequence of real numbers with $a_k\to \infty$. We say that the distribution $F_\sigma$ is $a_k$-summable if
\[\sum_{k=1}^\infty \min\{\sigma_{k,1}, \sigma_{k,2}, \dots \sigma_{k, a_k}  \} <\infty,\]
where $(\sigma_{k,j})_{k,j\in \N}$ are i.i.d. copies of $\sigma$.
\end{definition}
\begin{definition}[Plump distributions]\label{def::plump1}
We say that the distribution of $X$ is plump if there exist positive constants $c, \de, x_0>0$ such that for all $x>x_0$, 
\be\label{eq::plump} \Pv(X>x)\ge \frac{c}{x^{1-\de}}. \ee
\end{definition}
A more restrictive definition is to require from the tail of the distribution to stay between the power-law regime:
\begin{definition}[Plump power-laws]\label{def::plump2}
We say that the distribution of $X$ is plump power-law if there exist positive constants $c, \de, x_0>0$ such that for all $x>x_0$, 
\be\label{eq::plump2} \frac{c}{x^{1-\de}}\le \Pv(X>x)\le \frac{c}{x^{\de}}. \ee
\end{definition}
\begin{remark}\normalfont Without loss of generality, (by possibly modifying $\delta, x_0$) we can assume that $c=1$ in Definitions \ref{def::plump1} and \ref{def::plump2}.
\end{remark}
Note that the notion of plump distributions is much weaker than having a power-law tail behavior. In fact, any distribution that satisfies \eqref{eq::power-law-alpha} for $\alpha\in(0,1)$ is plump, but so are distributions with logarithmic tails, e.g. $\Pv(X>x)=c/\log x$ is a plump distribution. The plump power-law definition allows for distributions where the power-law exponent might vary infinitely many times between different values of $\alpha$ but it only has fatter tails than power-laws finitely many times. The next notion is min-summability of branching process trees: the idea is, that the sum of the minimal birth-time in each generation should be finite. More precisely,
\begin{definition}[Min-summability of random trees]
Let $T$ be a rooted infinite random tree, and let $Z_k$ denote the number of vertices at graph distance $k$ from the root. 
We say that the random tree with  i.i.d. $\sigma$-distributed edge-weights is min-summable if 
\[ \Pv\left( \sum_{i=1}^\infty  \min\{ \sigma_{k,1}, \dots, \sigma_{k, Z_k}\} < \infty \right) =1.\]
\end{definition}
It is not hard to show (see e.g. \cite[Claim 0.1]{AmiDev13}) that min-summability is a tail event and hence it happens with probability $0$ or $1$ conditioned on the survival of the BP.
Here we rephrase the theorem about the equivalence of min-summability and explosion.
\begin{theorem}[Min-summability\,$=$\,explosion,\cite{AmiDev13}]\label{thm::amidev1} Let $\mathcal W_{M}(X)$ denote the set of weight distributions that are min-summable for a Galton-Watson BP conditioned on survival with offspring distribution $X$, and let $\mathcal W_{E}(X)$ denote the set of weight distributions $F_\sigma$ such that the age-dependent BP $(h_X, F_\sigma, 0, \infty)$ is an explosive process. Then, if $X$ is a plump distribution then $\mathcal W_{M}(X)=W_{E}(X)$.
\end{theorem}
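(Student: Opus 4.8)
The plan is to establish the two inclusions $\mathcal{W}_E(X)\subseteq\mathcal{W}_M(X)$ and $\mathcal{W}_M(X)\subseteq\mathcal{W}_E(X)$ separately; only the second uses plumpness of $X$. For the first I would argue directly on the age-dependent tree, whose genealogy is the Galton--Watson tree with offspring law $X$ and whose edges carry i.i.d.\ $\sigma$-distributed weights, the weight on the edge into $x$ being the birth-time $\sigma_x$. Since the $Z_j$ edges entering generation $j$ carry exactly the $Z_j$ i.i.d.\ copies $\{\sigma_y:y\in G_j\}$, for every $x\in G_n$ we have $\tau_x=\sum_{j=1}^n\sigma_{x_{|j}}\ge\sum_{j=1}^n\min_{y\in G_j}\sigma_y$; taking the infimum over $x\in G_n$ and letting $n\to\infty$, Lemma \ref{lemma::rays} gives $V=\lim_n M_n\ge\sum_{j=1}^\infty\min_{y\in G_j}\sigma_y$. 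If the BP is explosive then by Claim \ref{cl::nocons} it explodes a.s.\ on survival, so conditionally on survival $V<\infty$ a.s.\ and hence the series on the right is a.s.\ finite on survival --- which is exactly the statement that the $\sigma$-weighted Galton--Watson tree is min-summable, i.e.\ $F_\sigma\in\mathcal{W}_M(X)$. No tail assumption on $X$ is used here.

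For the converse inclusion --- the hard direction, reproved in Section \ref{s::newproof} --- I would use a generation-dependent thinning: in generation $k$ discard every edge whose weight exceeds a threshold $t_k$, chosen so that (i) $\sum_k t_k<\infty$, so every infinite ray of the thinned tree has finite total length and, by Lemma \ref{lemma::rays}, its mere existence forces explosion; and (ii) $F_\sigma(t_k)$ is still large enough --- morally $F_\sigma(t_k)\approx 1/Z_k$ --- that a kept vertex with $X$ children keeps $\mathrm{Bin}(X,F_\sigma(t_k))$ of them, which by plumpness is a large number with non-negligible probability. Such thresholds can be extracted from min-summability: plumpness makes $Z_k$ grow doubly exponentially on survival (for plump power-laws $\log Z_k$ is sandwiched between two deterministic doubly-exponential sequences), which on a positive-probability event lets one replace the random $Z_k$ by a deterministic $\overline{n}_k\ge Z_k$, so that min-summability reads $\sum_k\min\{\sigma_{k,1},\dots,\sigma_{k,\overline{n}_k}\}<\infty$, a $0/1$ tail event of the i.i.d.\ weight array, hence true a.s., giving the deterministic input needed to choose the $t_k$. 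A first/second-moment comparison then shows the thinned tree is again an inhomogeneous plump-type branching process whose generation sizes $\widetilde{Z}_k$ satisfy, with high probability, $\log\widetilde{Z}_k\gtrsim\tfrac{1}{1-\delta}\log\widetilde{Z}_{k-1}-\log\tfrac{1}{F_\sigma(t_k)}$, so that it is infinite with positive probability precisely when $\sum_k\rho^{-k}\log(1/F_\sigma(t_k))<\infty$ for the relevant growth rate $\rho=\rho(\delta)>1$; on that event explosion occurs, so $F_\sigma\in\mathcal{W}_E(X)$. Equivalently, once a valid $t_k$ is fixed one may simply feed the associated test function into Claim \ref{cl::testfunction}.

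The crux is the choice of the thresholds $t_k$: requirements (i) and (ii) pull in opposite directions, and min-summability is exactly the borderline hypothesis that permits a compatible choice, but only after a careful shift or truncation of the $t_k$ to manufacture the needed slack --- which is why controlling it needs the $0/1$ law for min-summability, the sharp doubly-exponential bounds on $Z_k$ coming from plumpness, and a Borel--Cantelli / stochastic-domination estimate ensuring the thinned tree's per-generation survival probability does not degenerate. This balancing goes through cleanly for plump power-law $X$ (the case reproved here); for fully general plump $X$ one falls back on the argument of \cite{AmiDev13}.
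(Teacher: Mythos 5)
The paper does not prove this theorem itself; it is cited from \cite{AmiDev13}, and the paper only reproves the harder inclusion $\mathcal W_M(X)\subseteq\mathcal W_E(X)$ (Section~\ref{s::newproof}), and only for plump \emph{power-law} offspring, so there is no single in-paper proof to match. For that hard inclusion your sketch is the right skeleton and coincides with the paper's strategy, but the threshold calibration that you gloss over is exactly where the content lies: taking $t_k$ so that $F_\sigma(t_k)\approx 1/\bar n_k$ with $\bar n_k$ the Davies upper bound on $Z_k$ gives retention probabilities of order $\exp\{-c/\delta^k\}$, and then the generating-function recursion $W_{n+1}(s)=W_n(p_n g(s))$ of Claim~\ref{cl::Wn} yields $W(1)\ge\prod_j p_j^{\alpha^j}$ with $\alpha=1-\delta/2$, which degenerates to $0$ whenever $\alpha\ge\delta$, i.e.\ for all $\delta\le 2/3$. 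The paper earns the needed slack through Lemma~\ref{lemma::integral}, which shows the integral criterion is insensitive to the exponent base and permits $p_k=\exp\{-C/\beta^k\}$ for $\beta\in(\alpha,1)$ strictly slower than $1/\bar n_k$ — this is precisely the ``careful shift to manufacture slack'' you allude to, so you have the right instinct but the lemma is doing essential work. Your proposed moment-based control of $\log\widetilde Z_k$ would also be harder to close than the paper's direct generating-function bound on the survival probability via Lemma~\ref{lem::sufficient}. By contrast, your first inclusion $\mathcal W_E(X)\subseteq\mathcal W_M(X)$ is a clean, self-contained argument that the paper never states in this form: the pointwise bound $M_n\ge\sum_{j\le n}\min_{y\in G_j}\sigma_y$ together with Claim~\ref{cl::nocons} gives it for \emph{arbitrary} offspring $X$, whereas the paper only establishes the contrapositive, for plump power-laws, at the end of the proof of Theorem~\ref{thm::amidev-re} via the Davies double-exponential upper bound and \cite[Corollary~4.3]{AmiDev13}. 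Your direct version is both simpler and more general, and would be a worthwhile addition.
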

In the sequel, we write 
\be\label{def::inverse} F^{(-1)}(y):=\inf\{t\in \R: F(t)\ge y \}\ee the generalised inverse of the distribution function $F$.

\begin{theorem}[Min-summability criterion, \cite{AmiDev13}]\label{thm::amidev2}
Given a plump offspring distribution $X$, let $x_0 > 1$ be large
enough such that the condition \eqref{eq::plump} holds for all $x \ge x_0$. Define the function
$h:\N \to \R^+$ as follows:
$h(0) = x_0$ and $h(n +1) = F^{(-1)}_X(1-1/h(n))$  for all $n \ge 1$.
Then for a weight distribution $F_\sigma$, the branching process $(h_X, F_\sigma, 0, \infty)$ is min-summable and hence also explosive  if and only if
\be\label{eq::explosive-crit-1} \sum_{k=1}^\infty F_\sigma^{(-1)}(1/h(n))<\infty.\ee
\end{theorem}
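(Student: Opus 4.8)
The plan is to reduce \eqref{eq::explosive-crit-1} to a statement purely about the Galton--Watson tree $T$ carrying i.i.d.\ $\sigma$-edge-weights, and then to show that the \emph{random} series $\sum_k\min\{\sigma_{k,1},\dots,\sigma_{k,Z_k}\}$ converges almost surely if and only if the \emph{deterministic} series $\sum_{k\ge1}F_\sigma^{(-1)}(1/h(k))$ converges. By Theorem~\ref{thm::amidev1}, min-summability of $F_\sigma$ for the offspring law $X$ is equivalent to explosivity of $(h_X,F_\sigma,0,\infty)$, so this equivalence is precisely the asserted characterisation. Thus the whole proof is: (i) control $Z_k$ by $h$; (ii) control $\sum_k\min_{\mathrm{gen}\,k}\sigma$ by $\sum_k F_\sigma^{(-1)}(1/Z_k)$; (iii) glue the two together.

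The first ingredient is the doubly-exponential growth of infinite-mean Galton--Watson processes, made quantitative through the iterated function $h$. I would prove that, conditioned on survival, there is an almost surely finite random integer $C$ with
\be
h(k-C)\le Z_k\le h(k+C)\qquad\text{for all large }k.
\ee
For the lower bound, $Z_{k+1}$ is at least the largest family size among the $Z_k$ individuals of generation $k$, and the maximum of $Z_k$ i.i.d.\ copies of $X$ concentrates around $F_X^{(-1)}(1-1/Z_k)$; feeding this into the recursion $h(n+1)=F_X^{(-1)}(1-1/h(n))$, and using that plumpness \eqref{eq::plump} forces $h(n+1)\ge (c\,h(n))^{1/(1-\de)}$ — so that $h$ grows faster than doubly exponentially and the errors are summable — a Borel--Cantelli argument upgrades the eventual bound $Z_{k+1}\ge\tfrac12 F_X^{(-1)}(1-1/Z_k)$ to the stated two-sided sandwich. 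The matching upper bound on $Z_{k+1}=\sum_{i\le Z_k}X_i$ comes from a single-big-jump estimate of the form $\Pv(\sum_{i\le m}X_i>t)\lesssim m\,\Pv(X>t)$, again combined with the rapid growth of $h$ to make the Borel--Cantelli errors summable.

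The second ingredient is a first-moment / three-series comparison for the series itself. Conditionally on the tree, $\min\{\sigma_{k,1},\dots,\sigma_{k,Z_k}\}$ has survival function $(1-F_\sigma(t))^{Z_k}$, which equals $\approx e^{-1}$ at $t=F_\sigma^{(-1)}(1/Z_k)$; hence, after truncating at height $1$, the $k$-th term has expectation comparable (up to absolute constants) to $\min\{F_\sigma^{(-1)}(1/Z_k),1\}=F_\sigma^{(-1)}(1/Z_k)$ for large $k$. Since the terms are independent across $k$ once $(Z_k)_{k\ge1}$ is fixed, Kolmogorov's three-series theorem, applied conditionally on $(Z_k)_{k\ge1}$, gives that $\sum_k\min\{\sigma_{k,1},\dots,\sigma_{k,Z_k}\}<\infty$ almost surely if and only if $\sum_k F_\sigma^{(-1)}(1/Z_k)<\infty$ almost surely. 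Combining this with the sandwich from the first step, the monotonicity of $F_\sigma^{(-1)}$, and the fact that shifting the summation index by the finite random amount $C$ does not affect convergence of a series of nonnegative terms, yields $\sum_k\min\{\sigma_{k,1},\dots,\sigma_{k,Z_k}\}<\infty$ a.s.\ $\iff\sum_k F_\sigma^{(-1)}(1/h(k))<\infty$, which is \eqref{eq::explosive-crit-1}.

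The main obstacle I expect is the \emph{upper} bound $Z_k\le h(k+C)$ for a merely \emph{plump} (one-sided) offspring law: \eqref{eq::plump} only lower-bounds the tail of $X$, so while the lower bound $h(k-C)\le Z_k$ and the fast growth of $h$ are robust, the upper bound needs that $\sum_{i\le Z_k}X_i$ is essentially carried by its maximum, a (sub)exponential-type property that has to be extracted from plumpness with some care — this is exactly where the work of \cite{AmiDev13} lies, and where the plump (resp., for the cleanest two-sided control, plump power-law) hypothesis enters. A secondary technical nuisance is tracking the slowly varying factors hidden inside $F_X^{(-1)}$ and $F_\sigma^{(-1)}$ when one passes from ``concentrates around'' to honest inequalities; these are harmless and can be absorbed into the random shift $C$.
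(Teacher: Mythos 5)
Your outline reproduces the route of the cited reference \cite{AmiDev13}: sandwich the generation sizes $Z_k$ between $h(k-C)$ and $h(k+C)$ for an a.s.\ finite random shift $C$, then compare the random series $\sum_k\min\{\sigma_{k,1},\dots,\sigma_{k,Z_k}\}$ with $\sum_k F_\sigma^{(-1)}(1/Z_k)$ via a truncated first-moment/three-series argument conditionally on $(Z_k)$, and absorb the random shift using the doubly-exponential speed of $h$. You have correctly flagged the main obstacle: the \emph{upper} bound $Z_k\le h(k+C)$ for a merely plump $X$, which needs a single-big-jump (subexponentiality) estimate for $\sum_{i\le Z_k}X_i$, and which is exactly where \cite{AmiDev13} invests effort. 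The paper, however, does not prove Theorem~\ref{thm::amidev2} this way: it is stated as a cited result, and what the paper does instead in Section~\ref{s::newproof} is a \emph{new} proof of the (harder) sufficiency direction by a generation-dependent thinning (Definition~\ref{def::thinning}, Claim~\ref{cl::Wn}, Lemma~\ref{lem::sufficient}): fix summable thresholds $t_n$, delete the subtree of any generation-$n$ individual whose birth-time exceeds $t_n$, and show via the recursion $W_{n+1}(s)=W_n(p_n g(s))$ that the thinned process remains supercritical, which directly exhibits an infinite ray of total length $\le\sum_n t_n<\infty$. This avoids the two-sided sandwich on $Z_k$ entirely—no upper bound on $Z_k$ is needed, only the plump lower bound on the tail of $X$—and it is precisely the idea that later transfers to the incubation-time models of Section~\ref{s::incu}, where the sandwich-plus-three-series machinery does not extend cleanly because the thinning interacts with the edge weights. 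The trade-off is that the paper's integral reformulation (Lemma~\ref{lemma::integral}, Theorem~\ref{thm::amidev-re}) is two-sided only for plump \emph{power-laws}: the necessity direction still needs the upper control on $Z_k$ that you describe, and the paper delegates this to Davies's double-exponential growth \cite{Dav78} together with \cite[Corollary 4.3]{AmiDev13}, rather than redoing it. So your proof is a valid sketch of the original \cite{AmiDev13} argument, but it is a genuinely different route from the one the paper develops, and the thinning route buys exactly the flexibility used in Theorems~\ref{thm::incu-main} and~\ref{thm::incu-main-forw}.
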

This theorem is extremely powerful and has many important implications: it shows that explosion is a fairly robust property. This is what we investigate below.
\subsection{Explosion is a robust property}\label{s::robust}
In this section, we show that the explosion of the classical examples are quite robust, but before that, we provide an equivalent criterion to \eqref{eq::explosive-crit-1} for plump power-laws that reveals the robustness of this theorem better. 
After that, we show that explosion is closed under such operations as 1) changing the degree distribution to a different power-law, 2) taking sums, maximum, binomial thinning of birth-time distributions that explode.
\begin{lemma}\label{lemma::integral} For a plump power-law distribution $X$, and a birth-time distribution $F_\sigma$, the age-dependent BP $(h_X, F_\sigma, 0, \infty)$ is explosive if and only if for a small enough $\ve>0$, and some arbitrary constant $C>0$, 
\be\label{eq::explosive-crit-2}
\int_0^\ve F^{(-1)}_\sigma\left( \frac{1}{\exp\{ C/y \}}\right) \frac1y \mathrm dy <\infty.
\ee
Equivalently, if and only if
\be\label{eq::explosive-crit-3}
\int_{1/\ve}^\infty F^{(-1)}_\sigma\left( \e^{- Cu}\right) \frac1u \mathrm du<\infty.
\ee
\end{lemma}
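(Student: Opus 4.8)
The strategy is to start from the discrete criterion \eqref{eq::explosive-crit-1} of Theorem \ref{thm::amidev2} and show that, for plump power-law offspring $X$, the sum $\sum_{n} F_\sigma^{(-1)}(1/h(n))$ converges if and only if the integral \eqref{eq::explosive-crit-2} (equivalently \eqref{eq::explosive-crit-3}) does. The first step is to understand the growth rate of the sequence $h(n)$ defined by $h(0)=x_0$, $h(n+1)=F^{(-1)}_X(1-1/h(n))$. Using the plump power-law bounds \eqref{eq::plump2}, namely $c/x^{1-\delta}\le \Pv(X>x)\le c/x^{\delta}$, one inverts: $\Pv(X>x)=1/h(n)$ gives $x=F^{(-1)}_X(1-1/h(n))$ sandwiched between constants times $h(n)^{1/(1-\delta)}$ and $h(n)^{1/\delta}$. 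Hence there are constants $1<\gamma_1\le\gamma_2$ (namely $\gamma_1=1/(1-\delta)$ and $\gamma_2=1/\delta$, up to absorbing the slowly-varying/constant factors by enlarging $x_0$) with $h(n)^{\gamma_1}\lesssim h(n+1)\lesssim h(n)^{\gamma_2}$. Iterating, $\log h(n)$ grows at least like $\gamma_1^n$ and at most like $\gamma_2^n$; in particular $\log\log h(n)$ is comparable to $n$, i.e. there are constants $0<a\le b<\infty$ with $a n\le \log\log h(n)\le b n$ for $n$ large. Equivalently, writing $u_n:=\log h(n)$, we have $u_n\to\infty$ and $u_n$ is \emph{geometrically sandwiched}: $u_{n+1}/u_n\in[\gamma_1,\gamma_2]$ for large $n$.

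The second step is a sum-to-integral comparison. Set $g(u):=F_\sigma^{(-1)}(\e^{-u})$, which is non-increasing in $u$ (since $F^{(-1)}_\sigma$ is non-decreasing and $\e^{-u}$ is decreasing). The discrete sum is $\sum_n g(u_n)$ with $u_n$ growing geometrically. For such a sequence the standard Cauchy-condensation-type argument applies: because $u_{n+1}\asymp u_n$, the points $\{u_n\}$ form a net that, on the logarithmic scale $v=\log u$, is roughly equally spaced with spacing bounded above and below. Therefore
\[
\sum_{n\ge n_0} g(u_n)\ \asymp\ \int_{u_{n_0}}^\infty g(u)\,\frac{\mathrm d u}{u},
\]
with the monotonicity of $g$ supplying both inequalities: each term $g(u_n)$ controls (up to a constant) the integral of $g(u)/u$ over the block $[u_n,u_{n+1}]$ from below via $g(u_n)\log(u_{n+1}/u_n)$, and the block just before from above. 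After the change of variables $u=Cu'$ (any fixed $C>0$, absorbed into the slowly varying function which is harmless since we only track convergence), this is exactly \eqref{eq::explosive-crit-3}; the substitution $y=1/u$ turns \eqref{eq::explosive-crit-3} into \eqref{eq::explosive-crit-2}, giving the stated equivalence and incidentally showing the value of $C$ and of $\ve$ is irrelevant for convergence.

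The main obstacle I expect is the first step: controlling $h(n)$ cleanly when the power-law exponent of $X$ is allowed to \emph{oscillate} between $1-\delta$ and $\delta$ infinitely often (plump power-law, not a genuine regularly varying tail). One must be careful that the double-exponential lower and upper envelopes for $h(n)$ do not drift apart in a way that breaks the comparison; the key point is that only $\log\log h(n)\asymp n$ is needed, and this survives the oscillation because both $\gamma_1,\gamma_2$ are fixed constants strictly greater than $1$. A secondary technical nuisance is handling the slowly varying corrections and the interface between "$\Pv(X>x)$" and "$\Pv(X\ge x)$" and between $F^{(-1)}_X(1-1/h(n))$ and the exact inverse of the tail; all of these are absorbed by enlarging $x_0$ and adjusting constants, and none affect convergence of the sum or the integral. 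Finally one should note the lemma is stated as an "if and only if", so both directions come for free from the two-sided comparison $\sum_n g(u_n)\asymp\int^\infty g(u)\,\mathrm d u/u$ together with Theorem \ref{thm::amidev2}.
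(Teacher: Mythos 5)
Your proposal is correct and follows essentially the same route as the paper's proof: reduce to the discrete sum criterion of Theorem \ref{thm::amidev2}, then argue that $\sum_n F^{(-1)}_\sigma(1/h(n))$ and the integral equi-converge because the plump power-law bounds force $\log h(n)$ to grow geometrically. The only organizational difference is that the paper explicitly sandwiches $h(n)$ between the exact power-law iterates $h'(n) = x_0^{(1-\delta)^{-n}}$ and $h''(n) = x_0^{\delta^{-n}}$ and compares the integral separately to $\sum_n F^{(-1)}_\sigma(1/h'(n))$ and $\sum_n F^{(-1)}_\sigma(1/h''(n))$, chopping $[0,\varepsilon]$ into geometric blocks $[\alpha^{n+1},\alpha^n]$ with $\alpha = 1-\delta$ and $\alpha = \delta$ respectively, whereas you observe directly that $u_n := \log h(n)$ has bounded ratios $u_{n+1}/u_n \in [1/(1-\delta),\,1/\delta]$ and run a single Cauchy-condensation comparison against $\int g(u)\,\mathrm du/u$. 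Your version is a bit more unified, but both rest on the same two facts: plumpness controls $\log h(n+1)/\log h(n)$ two-sidedly, and a monotone summand over a geometrically spreading index set equi-converges with the corresponding $\mathrm du/u$ integral. One tiny slip: the plump power-law definition \eqref{eq::plump2} has only a constant prefactor $c$, not a slowly varying one; this is harmless since, as the paper's remark notes, $c$ can be absorbed into $x_0$ and $\delta$.
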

\begin{remark}\normalfont Note that the value of $C$ can be scaled out by changing variables. 
\end{remark}
\begin{proof} We show that \eqref{eq::explosive-crit-2} is equivalent to \eqref{eq::explosive-crit-1} for plump power-law distributions. Without loss of generality we will assume that $c=1$ in the definition of plump power-law distributions. First, let us sandwich the distribution of $X$ between $X'$ and $X''$ so that $\Pv(X'\ge x)=1/x^{1-\de}$ and $\Pv(X''\ge x)=1/x^{-\de}$ hold for all $x>x_0$. 
Let $h'(n), h''(n)$ be defined by the same recursion as $h(n)$, for the distribution $X'$ and $X''$ instead, respectively. Then, $h'(n) \le h(n)\le h''(n)$  holds for all $n$, hence \[ F^{(-1)}_\sigma(1/h''(n))\le F^{(-1)}_\sigma(1/h(n))\le F^{(-1)}_\sigma(1/h'(n))\] and so summability of the largest implies summability of the rest and divergence of the smallest implies divergence of the other two. 

First we show that convergence of the integral implies explosion of the process $(h_X, F_\sigma, 0, \infty)$.
Let $\alpha:=1-\de$, in the definition of plump distribution $X$. By the defining recursion of $h'(n)$, we obtain $h'(n)=(x_0)^{1/\alpha^n}$ Then, we write
\be\label{eq::bound11} \int_0^\ve F^{(-1)}_\sigma\left( \frac{1}{\exp\{ C/y \}}\right) \frac1y \mathrm dy=\sum_{n=k}^\infty \int_{\alpha^{n+1}}^{\alpha^{n}} F^{(-1)}_\sigma\left( \frac{1}{\exp\{ C/y \}}\right) \frac1y \mathrm dy,\ee
where $k$ can be either  chosen as $k:=\max \{n: \al^n >\ve\}$ or $k:=\min \{n: \al^n \le \ve\}$ depending on which direction of the if-and-only-if we look at.
Note that 
\be\label{eq::bound12}  F^{(-1)}_\sigma\left( \frac{1}{\e^{C/\alpha^{n+1}}}\right) (1-\alpha) \le \int_{\alpha^{n+1}}^{\alpha^{n}} F^{(-1)}_\sigma\left( \frac{1}{\e^{C/y}}\right) \frac1y \mathrm dy \le F^{(-1)}_\sigma\left( \frac{1}{\e^{C/\alpha^n}}\right) \frac{1-\alpha}{\alpha}, \ee
hence, the integral in \eqref{eq::explosive-crit-2} converges if and only if 
\be\label{eq::sum-3} \sum_{n=k}^{\infty} F^{(-1)}_\sigma\left( \frac{1}{\e^{C/\alpha^n}}\right) = \sum_{n=k}^{\infty} F^{(-1)}_\sigma\left( \frac{1}{h'(n)}\right) < \infty,\ee
where we have set $\e^{C}:=x_0$. Combination with Theorem \ref{thm::amidev2} finishes the proof.
For the other direction, note that the previous argument can be repeated with $\alpha:=\delta$ as well, and if the integral diverges, then so does the sum on the rhs of \eqref{eq::sum-3} with $h''(n)=(x_0)^{1/\alpha^n}$. Conservativeness of the process $(h_{X''}, F_\sigma, 0, \infty)$ and also that of $(h_{X}, F_\sigma, 0, \infty)$ then follows by Theorem \ref{thm::amidev2} again.
\end{proof}
\begin{corollary}[Power-law exponents can be changed]\label{cor::power-law-change}
Let $X$ be a plump power-law distribution and $F_\sigma$ is a weight-distribution. If $(h_X, F_\sigma, 0, \infty)$ is explosive, then $(h_Y, F_\sigma, 0, \infty)$ is explosive for any other plump power-law distribution $Y$. 
In particular, let $h_\al$ be the generating function of a random variable $X$ that has a power-law tail behaviour with exponent $\al\in(0,1)$, as in \eqref{eq::power-law-alpha}. Then, if $(h_\al, F_\sigma, 0, \infty)$ is explosive for some $\al \in (0,1)$, then it is explosive for all $\al\in (0,1)$. The same holds for $(h_X, F_\sigma, 0, F_C)$ and $(h_Y, F_\sigma, 0, F_C)$ for arbitrary contagious period $C$ with $\Pv(C>0)>0$.
\end{corollary}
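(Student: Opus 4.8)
The corollary reduces, via Lemma~\ref{lemma::integral}, to the elementary observation that the integrability condition~\eqref{eq::explosive-crit-2} is a condition on $F_\sigma$ \emph{alone}: the offspring law $X$ does not enter it, and the constant $C$ is immaterial, since a linear change of variables in $y$ turns the integral with constant $C$ into the one with constant $C'$ (finiteness near $0$ being unaffected by the resulting change of the upper endpoint), as already noted in the remark after Lemma~\ref{lemma::integral}. Thus, for the age-dependent statement, given two plump power-laws $X,Y$ I would apply Lemma~\ref{lemma::integral} to $X$: explosivity of $(h_X,F_\sigma,0,\infty)$ is equivalent to~\eqref{eq::explosive-crit-2}, and since that condition makes no reference to $X$, a second application of Lemma~\ref{lemma::integral} --- now to $Y$ --- yields explosivity of $(h_Y,F_\sigma,0,\infty)$. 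For the ``in particular'' clause I would first check that every $X$ satisfying~\eqref{eq::power-law-alpha} with $\al\in(0,1)$ is a plump power-law in the sense of Definition~\ref{def::plump2}: choosing $\de\in(0,\min(\al,1-\al))$, Potter's bounds for the slowly varying function $L$ give $L(x)x^{1-\al-\de}\to\infty$ and $L(x)x^{\de-\al}\to 0$, which are exactly the two bounds $c\,x^{-(1-\de)}\le\Pv(X\ge x)\le c\,x^{-\de}$ for $x$ large; the assertion that explosivity for one exponent $\al\in(0,1)$ forces it for all such exponents is then the instance of the preceding argument in which the two plump power-laws are generated by $h_\al$ and $h_{\al'}$.

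For the contagious-period statement, suppose $(h_X,F_\sigma,0,F_C)$ is explosive with $\Pv(C>0)>0$; I may assume $C\not\equiv\infty$ (otherwise the claim is the age-dependent case), and then $C\not\equiv0$ holds automatically, so $(I,C)\equiv(0,C)\not\equiv(0,\infty)$. By Corollary~\ref{cor:droppingIC} the process $(h_X,F_\sigma,0,\infty)$ is explosive, hence by the first paragraph condition~\eqref{eq::explosive-crit-2} holds. Now fix a second plump power-law $Y$ with constants $c,\de,x_0$ as in Definition~\ref{def::plump2}. The plan is to squeeze $Y$ from below by an \emph{exact} power-law: fix $\al\in(1-\de,1)$ and an integer $N>x_0$, and let $Y_-$ be the nonnegative integer-valued law with $\Pv(Y_-\ge k)=\Pv(Y\ge k)$ for $k\le N$ and $\Pv(Y_-\ge k)=c\,N^{\al+\de-1}\,k^{-\al}$ for $k>N$. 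Using the plump lower bound $\Pv(Y\ge k)\ge c\,k^{\de-1}$ together with $\al+\de-1>0$, one verifies that $\Pv(Y_-\ge k)\le\Pv(Y\ge k)$ for \emph{every} $k$, so that $h_{Y_-}(s)\ge h_Y(s)$ for all $s\in[0,1]$; moreover $Y_-$ satisfies~\eqref{eq::power-law-alpha} with exponent $\al\in(0,1)$ and is in particular a plump power-law. Applying the age-dependent case to $Y_-$ gives explosivity of $(h_{Y_-},F_\sigma,0,\infty)$; since $Y_-$ is a genuine power-law with exponent in $(0,1)$ and $C\not\equiv0$, Corollary~\ref{thm::cont-nomatter} yields explosivity of $(h_{Y_-},F_\sigma,0,F_C)$; and finally $h_Y\le h_{Y_-}$ lets Theorem~\ref{thm::comp-tail} transfer explosivity to $(h_Y,F_\sigma,0,F_C)$, which is the claim.

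The only genuinely nontrivial step is this re-introduction of the contagious period for a \emph{general} plump power-law. The ``contagious periods do not matter'' theorem, Theorem~\ref{thm::cont-nomatter2}, is proved only for offspring laws of the exact form~\eqref{eq::power-law-alpha}: its proof uses that the operator $Q_I$ is homogeneous of degree $\al$ up to a slowly varying correction, $Q_I(b\eta)\approx b^{\al}Q_I\eta$, which fails for a plump power-law whose exponent oscillates. One cannot repair this by invoking ``more offspring helps'' either, because, as Example~\ref{ex::tail-counter} shows, mere tail domination of $Y$ over an exact power-law $Y_-$ does not give $h_Y\le h_{Y_-}$. This forces the construction of $Y_-$ to be arranged so that $Y_-$ is dominated by $Y$ \emph{as a whole distribution} --- equal to $Y$ on an initial segment and with a strictly lighter power-law tail beyond it --- which is precisely what makes the generating-function comparison of Theorem~\ref{thm::comp-tail} applicable; checking that such a $Y_-$ exists with exponent genuinely inside $(0,1)$ is the one short computation requiring care, and it uses exactly the plump-power-law \emph{lower} tail bound on $Y$.
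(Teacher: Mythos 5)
Your argument for the first two clauses is exactly the paper's: Lemma~\ref{lemma::integral} turns explosivity into the $X$-free integral condition~\eqref{eq::explosive-crit-2}, from which the offspring law can be swapped freely among plump power-laws, and the ``in particular'' clause is the specialization to exact power-laws (your Potter-bounds check that~\eqref{eq::power-law-alpha} implies Definition~\ref{def::plump2} is a sensible addition the paper leaves implicit). For the last clause the paper's proof is a single line, ``a simple combination with Corollary~\ref{thm::cont-nomatter},'' and you correctly notice that this is only immediate when $Y$ is an \emph{exact} power-law, since Corollary~\ref{thm::cont-nomatter} (and Theorem~\ref{thm::cont-nomatter2} behind it) is stated and proved precisely for offspring with regularly varying tail~\eqref{eq::power-law-alpha}; for a general plump power-law $Y$ one genuinely needs the extra comparison step you supply. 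Your construction of the auxiliary exact power-law $Y_-$ with $Y_-\le_{\mathrm{st}}Y$ (hence $h_{Y}\le h_{Y_-}$) and then the chain ``drop $C$ $\to$ part (1) for $Y_-$ $\to$ Corollary~\ref{thm::cont-nomatter} for $Y_-$ $\to$ Theorem~\ref{thm::comp-tail} to pass to $Y$'' is correct; I checked that the choice $\al\in(1-\de,1)$ with $\Pv(Y_-\ge k)=c\,N^{\al+\de-1}k^{-\al}$ for $k>N$ and $\Pv(Y_-\ge k)=\Pv(Y\ge k)$ for $k\le N$ indeed gives a monotone, sub-$Y$ tail using the plump \emph{lower} bound, and $Y_-$ satisfies~\eqref{eq::power-law-alpha} with constant slowly varying part. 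An alternative, slightly shorter route to the same step, in the spirit of the proof of Lemma~\ref{lem::sufficient}, is to invoke Karamata's Tauberian theorem to get $1-h_Y(s)\ge(1-s)^{\al}$ near $s=1$ for any $\al>1-\de$, which gives $h_Y\le h_{\al}$ directly and then the same appeal to Theorem~\ref{thm::comp-tail}; either way the content is the same, and your version has the merit of being entirely elementary.
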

\begin{proof} Let $X$ be a power-law distribution plump distribution lower bound exponent $\al=1-\de$ while $Y$ be a power-law exponent with lower bound exponent $\al'\in(0,1)$. According to Lemma \ref{lemma::integral} if $(h_X, F_\sigma, 0, \infty)$ is explosive, then for some $C$ the integral in \eqref{eq::explosive-crit-2} converges. Since $Y$ is also a plump power-law, and the criterion is independent of the value of $\al$, for $\al'$ the same criterion applies and hence the process  $(h_{Y}, F_\sigma, 0, \infty)$ also explodes. The second statement follows by setting $X$ and $Y$ follow strict power laws as in \eqref{eq::power-law-alpha}. The last statement is a simple combination with Corollary \ref{thm::cont-nomatter}.
\end{proof}

Next we harvest some simple consequences of the min-summability criterion.
\begin{theorem}[Max, sum and thinning of birth-times still explodes]\label{thm::max-birthtime}
Let $X$ be a plump power-law distribution, $\sigma, \gamma$ be two (independent) birth-time distributions. Further, let $\ind_p$ be a Bernoulli random variable with mean $p\in (0,1]$. Then, 
if $(h_X, F_\sigma, 0, \infty)$ and $(h_X, F_\gamma, 0, \infty)$ are both explosive, then the following operations on the birth-time distribution produce an explosive BP:
\begin{enumerate}
\item Multiplying by a nonnegative constant $C\ge0$: $(h_X, F_{C\sigma}, 0, \infty)$ is explosive.
\item Binomial thinning of each individual with probability $p\in(0,1)$: $(h_X, F_{\ind \sigma+ (1-\ind)\infty}, 0, \infty)$ is explosive.
\item Taking the maximum or minimum of two random variables:  $(h_X, F_{\max\{\sigma, \gamma\}}, 0, \infty)$ and  $(h_X, F_{\min\{\sigma, \gamma\}}, 0, \infty)$ is explosive.
\item Taking the sum of two independent random variables: $(h_X, F_{\sigma+\gamma}, 0, \infty)$ is explosive.
\end{enumerate}
\end{theorem}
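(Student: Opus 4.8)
The common engine for all four statements will be the integral criterion of Lemma~\ref{lemma::integral}. Two of its features do the work. First, the criterion \eqref{eq::explosive-crit-2} involves the offspring law only through the standing assumption that it be a plump power-law, so once it holds for a given $F_\sigma$ it holds for \emph{every} plump power-law offspring distribution (this is exactly Corollary~\ref{cor::power-law-change}); it thus suffices to reason about the birth-time distribution and about plumpness of the offspring. Second, as the Remark after Lemma~\ref{lemma::integral} records, the constant $C$ in \eqref{eq::explosive-crit-2}--\eqref{eq::explosive-crit-3} can be absorbed by the substitution $v=Cu$, which only shifts the lower limit of integration; hence convergence of the integral for one value $C>0$ is equivalent to convergence for every value $C>0$. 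The plan is to bound, for each operation on $F_\sigma$, the resulting quantile function $F^{(-1)}$ by $F_\sigma^{(-1)}$ and $F_\gamma^{(-1)}$ evaluated at comparable arguments, and then quote Lemma~\ref{lemma::integral}. Operations that only \emph{shorten} birth-times will instead be read off immediately from the coupling comparison Theorem~\ref{thm::comp1}.

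For part (1): if $C=0$ all birth-times vanish and the process is explosive by Theorem~\ref{thm::mass-zero}(a) (the integral in \eqref{eq::explosive-crit-2} is then identically $0$). If $C>0$, then $F_{C\sigma}(t)=F_\sigma(t/C)$, so $F_{C\sigma}^{(-1)}(y)=C\,F_\sigma^{(-1)}(y)$ and the integral \eqref{eq::explosive-crit-2} is merely scaled by $C$; apply Lemma~\ref{lemma::integral}. For the minimum in part (3), $\min\{\sigma,\gamma\}\le\sigma$ under the obvious coupling, so $\min\{\sigma,\gamma\}\ {\buildrel d,0 \over \le}\ \sigma$ and Theorem~\ref{thm::comp1} applies. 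For the binomial thinning in part (2), the thinned reproduction process $\xi'(t)=\sum_{i=1}^X\ind_p^{(i)}\ind_{\sigma_i\le t}$ is, in law, the age-dependent process $(h_{X'},F_\sigma,0,\infty)$ with $X'\mid X\sim\mathrm{Bin}(X,p)$ and unchanged i.i.d.\ birth-times; it therefore suffices to check $X'$ is again plump power-law. The upper bound $\Pv(X'>x)\le\Pv(X>x)$ is trivial since $X'\le X$, and for the lower bound one writes $\Pv(X'>x)\ge\Pv(X\ge\lceil 2x/p\rceil)\,\Pv(\mathrm{Bin}(\lceil 2x/p\rceil,p)>x)$ and notes the second factor is $\ge\tfrac12$ for $x$ large by a standard binomial concentration estimate. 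Then Lemma~\ref{lemma::integral} (equivalently Corollary~\ref{cor::power-law-change}) gives explosivity.

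The real content is in the maximum in part (3) and the sum in part (4), where birth-times are \emph{lengthened} and no coupling comparison can help. By independence $F_{\max\{\sigma,\gamma\}}(t)=F_\sigma(t)F_\gamma(t)$, and since $F_\sigma(t)\ge\sqrt y$ and $F_\gamma(t)\ge\sqrt y$ together force $F_\sigma(t)F_\gamma(t)\ge y$, we get
\[ F_{\max\{\sigma,\gamma\}}^{(-1)}(y)\ \le\ F_\sigma^{(-1)}(\sqrt y)+F_\gamma^{(-1)}(\sqrt y). \]
Using $\sqrt{\e^{-C/y}}=\e^{-(C/2)/y}$ this yields
\[ \int_0^\ve F_{\max\{\sigma,\gamma\}}^{(-1)}\big(\e^{-C/y}\big)\,\tfrac1y\,\mathrm dy\ \le\ \int_0^\ve F_\sigma^{(-1)}\big(\e^{-(C/2)/y}\big)\,\tfrac1y\,\mathrm dy+\int_0^\ve F_\gamma^{(-1)}\big(\e^{-(C/2)/y}\big)\,\tfrac1y\,\mathrm dy, \]
and both integrals on the right are finite by the hypotheses and the $C$-independence noted above; Lemma~\ref{lemma::integral} then gives explosivity of $(h_X,F_{\max\{\sigma,\gamma\}},0,\infty)$. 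For the sum, $\sigma+\gamma\le 2\max\{\sigma,\gamma\}$ gives $F_{\sigma+\gamma}(t)\ge F_{\max\{\sigma,\gamma\}}(t/2)$, hence $F_{\sigma+\gamma}^{(-1)}(y)\le 2\,F_{\max\{\sigma,\gamma\}}^{(-1)}(y)$, and part (4) follows from part (3) together with the scaling of part (1).

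The main obstacle is conceptual rather than technical: parts (3)-max and (4) produce stochastically larger birth-times, so none of the monotone comparisons of Theorems~\ref{thm::comp1}--\ref{thm::comp4} applies, and the gain must be extracted from the fact that the distribution function of $\max\{\sigma,\gamma\}$ (resp.\ $\sigma+\gamma$) near the origin is not much thinner than that of $\sigma$ -- precisely what the $\sqrt y$-splitting quantifies, and where Lemma~\ref{lemma::integral} is indispensable. Two routine loose ends remain: one should note that explosivity of $(h_X,F_\sigma,0,\infty)$ already forces $\inf\mathrm{supp}\,F_\sigma=0$, so $\max\{\sigma,\gamma\}$, $\sigma+\gamma$ and $C\sigma$ all have infimum of support $0$ and Lemma~\ref{lemma::integral} is applicable to them; and the concentration estimate used in part (2) is the elementary statement that $\mathrm{Bin}(n,p)$ concentrates around $np$.
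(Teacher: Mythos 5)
Your proof is correct, and its backbone is the same as the paper's: in each part one upper-bounds the generalised inverse of the transformed birth-time distribution by $F_\sigma^{(-1)}$ and $F_\gamma^{(-1)}$ evaluated at rescaled arguments, and then invokes the integral criterion of Lemma~\ref{lemma::integral}, exploiting the harmless scalings that the criterion absorbs. Parts (1), (3)-max, (3)-min and (4) are substantively identical to the paper's; for the maximum you derive $F_{\max\{\sigma,\gamma\}}^{(-1)}(y)\le F_\sigma^{(-1)}(\sqrt y)+F_\gamma^{(-1)}(\sqrt y)$ directly from $F_{\max\{\sigma,\gamma\}}=F_\sigma F_\gamma$, whereas the paper passes through the intermediate objects $\max\{\sigma_1,\sigma_2\}$ and $\max\{\gamma_1,\gamma_2\}$; for the minimum you compare directly to $\sigma$ rather than to $\max\{\sigma,\gamma\}$. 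These are rearrangements of the same estimate. The one genuinely different step is part (2): the paper stays with the offspring law $X$ and computes $F_{\ind_p\sigma+(1-\ind_p)\infty}(t)=pF_\sigma(t)$, hence $F^{(-1)}(u)=F_\sigma^{(-1)}(u/p)$, a one-line scaling; you instead reinterpret the thinning as an age-dependent BP with offspring $X'\sim\mathrm{Bin}(X,p)$ given $X$ and unchanged birth-times, then verify $X'$ is again plump power-law via a binomial concentration bound and quote Corollary~\ref{cor::power-law-change}. Both are valid descriptions of the same process; the paper's route is shorter, but yours illustrates the robustness in the offspring law rather than in the birth-time law, which is a nice alternative. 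You also tidy up two small points the paper glosses over: the degenerate $C=0$ case in part (1), which you dispatch via Theorem~\ref{thm::mass-zero}(a) since $\Ev[X]=\infty$ for a plump power-law, and the domain of applicability of Lemma~\ref{lemma::integral} (that $\inf\mathrm{supp}$ of the transformed birth-time is still $0$). Finally, your bound $F_{\sigma+\gamma}^{(-1)}(y)\le 2F_{\max\{\sigma,\gamma\}}^{(-1)}(y)$ is the correct consequence of $\sigma+\gamma\le 2\max\{\sigma,\gamma\}$; the paper writes $F_{\max\{\sigma,\gamma\}}^{(-1)}(u/2)$ there, which is a slip (scaling the random variable by $2$ scales the inverse CDF, not its argument), but both expressions satisfy the integral criterion, so the conclusion stands either way.
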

\begin{remark}\normalfont The theorem remains valid also when we only assume that $X$ is a plump distribution. In this case, the proof is analogous to the one below, but one has to work with the sum \eqref{eq::explosive-crit-1} directly.
\end{remark}
By repeated use of the theorem, we obtain the following corollary.
\begin{corollary}Let $X$ be a plump power-law distribution and let $\sigma_1, \sigma_2, \dots, \sigma_k$ be independent but not necessarily identically distributed birth-time distributions and $\al_i\ge 0$ nonnegative numbers for $i\le k$. Then, if for all $1\le i\le k$ the processes $(h_X, F_{\sigma_i}, 0, \infty)$ are explosive then the processes $(h_X, F_{\max_{i\le k}\al_i\sigma_i}, 0, \infty)$ and $(h_X, F_{\sum_{i\le k} \al_i\sigma_i}, 0,\infty)$ also explode.
\end{corollary}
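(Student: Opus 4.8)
The plan is a short induction on $k$ with Theorem~\ref{thm::max-birthtime} doing all the work, the only point requiring care being the bookkeeping of independence. First I would normalize away the constants $\al_i$: any term with $\al_i=0$ can be discarded, since $\al_i\sigma_i\equiv 0$ contributes nothing to $\sum_{i\le k}\al_i\sigma_i$ and does not affect $\max_{i\le k}\al_i\sigma_i$ as long as at least one $\al_j>0$; and in the fully degenerate case where all $\al_i=0$ both $\sum$ and $\max$ equal the constant $0$, for which $(h_X,F_0,0,\infty)$ is explosive by part~(a) of Theorem~\ref{thm::mass-zero}, as $\Ev[|\xi(0)|]=\Ev[X]=\infty>1$ (a plump distribution has infinite mean). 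So I may assume every $\al_i>0$. Then part~(1) of Theorem~\ref{thm::max-birthtime} (applied with $\gamma=\sigma_i$) shows each $(h_X,F_{\al_i\sigma_i},0,\infty)$ is explosive; writing $\tau_i:=\al_i\sigma_i$, which are still independent since the $\sigma_i$ are, reduces the statement to proving: if $(h_X,F_{\tau_i},0,\infty)$ is explosive for every $i\le k$ and $\tau_1,\dots,\tau_k$ are independent, then $(h_X,F_{\max_{i\le k}\tau_i},0,\infty)$ and $(h_X,F_{\sum_{i\le k}\tau_i},0,\infty)$ are explosive.

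Next I would run the induction on $k$. The case $k=1$ is trivial and $k=2$ is precisely parts~(3) and~(4) of Theorem~\ref{thm::max-birthtime}. For the step from $k-1$ to $k$, set $M_{k-1}:=\max_{i\le k-1}\tau_i$ and $S_{k-1}:=\sum_{i\le k-1}\tau_i$; by the induction hypothesis $(h_X,F_{M_{k-1}},0,\infty)$ and $(h_X,F_{S_{k-1}},0,\infty)$ are explosive. Because $M_{k-1}$ and $S_{k-1}$ are measurable functions of $(\tau_1,\dots,\tau_{k-1})$, they are independent of $\tau_k$, so part~(3) of Theorem~\ref{thm::max-birthtime} applied to the independent pair $(M_{k-1},\tau_k)$ yields that $(h_X,F_{\max\{M_{k-1},\tau_k\}},0,\infty)=(h_X,F_{\max_{i\le k}\tau_i},0,\infty)$ explodes, and part~(4) applied to $(S_{k-1},\tau_k)$ yields that $(h_X,F_{S_{k-1}+\tau_k},0,\infty)=(h_X,F_{\sum_{i\le k}\tau_i},0,\infty)$ explodes. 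This uses only that $\max$ and $+$ are associative, so the grouping $\max_{i\le k}\tau_i=\max\{\max_{i\le k-1}\tau_i,\tau_k\}$ (and likewise for the sum) is legitimate, and a single left-to-right fold through Theorem~\ref{thm::max-birthtime} closes the induction.

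There is no genuinely hard step; the main thing to get right is that Theorem~\ref{thm::max-birthtime} requires the two birth-time distributions it combines to be \emph{independent}, which forces exactly the inductive ordering above — combine the first $k-1$ variables, then adjoin $\tau_k$ — rather than an order that would break independence. Finally, as noted in the remark after Theorem~\ref{thm::max-birthtime}, the same induction works verbatim when $X$ is only assumed plump rather than plump power-law, provided one invokes the plump version of that theorem (working directly with the sum~\eqref{eq::explosive-crit-1}) at each step.
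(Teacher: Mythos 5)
Your proof is correct and takes exactly the route the paper intends: the paper's proof is simply the one-line remark ``by repeated use of the theorem,'' and your induction on $k$ via parts (a), (c), (d) of Theorem~\ref{thm::max-birthtime}, together with the observation that $M_{k-1}=\max_{i\le k-1}\tau_i$ and $S_{k-1}=\sum_{i\le k-1}\tau_i$ stay independent of $\tau_k$, is the natural way to make that precise. The handling of the degenerate $\al_i=0$ case is fine (though you could get it more directly from part~(a) with $C=0$ rather than via Theorem~\ref{thm::mass-zero}).
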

\begin{proof}[Proof of Theorem \ref{thm::max-birthtime}]
For part (a), note that $F_{C\sigma}(t)=F_\sigma(t/C)$ hence $F_{C\sigma}^{(-1)}(u)=C F_\sigma^{(-1)}(u)$. 
For part (b), $F_{\ind \sigma+ (1-\ind)\infty}(t)=\Pv(\ind \sigma<t)=p F_\sigma(t)$. Hence $F_{\ind \sigma+ (1-\ind)\infty}^{(-1)}(u)=F_\sigma^{(-1)}(u/p)$. 
By Lemma \ref{lemma::integral} if $F_\sigma^{(-1)}$ is integrable as in \eqref{eq::explosive-crit-2} then so are these two transforms of it. 

For part (c) and (d), we first show that both $F_{\max\{\sigma_1,\sigma_2\}}(t)=(F_\sigma(t))^2$ and $F_{\max\{\gamma_1,\gamma_2\}}=(F_\gamma(t))^2$ form an explosive process with offspring distribution $X$, where $\sigma_i, \gamma_i$ are i.i.d. copies of $\sigma$ and $\gamma$, respectively.
Indeed, $F_{\max\{\sigma_1, \sigma_2\}}^{(-1)}(y)=F_{\sigma}^{(-1)}(y^{1/2})$ and hence
\[ \int_0^\ve F_{\max\{\sigma_1, \sigma_2\}}^{(-1)}\left(\frac{1}{\e^{C/y}}\right)\frac1y \mathrm d y=  \int_0^\ve F_{\sigma}^{(-1)}\left(\frac{1}{\e^{C/(2y)}}\right) \frac1y\mathrm d y<\infty,\]
and the latter integral is finite by Lemma \ref{lemma::integral} since $(h_X, F_\sigma, 0, \infty)$ is explosive. The proof for $(F_\gamma(t))^2$ is analogous.
Then, note that $\Pv(\max\{\sigma, \gamma\}<t)= F_\sigma(t)  F_\gamma(t)$ and hence for every $t$,
\[  \min\{ (F_\sigma(t))^2, (F_\gamma(t))^2 \}\le F_\sigma(t)  F_\gamma(t) \le \max\{ (F_\sigma(t))^2, (F_\gamma(t))^2 \}\]
Hence
\[ F_{\max\{\sigma,\gamma\}}^{(-1)}(u) \le \max\{ F_{\max\{\sigma_1,\sigma_2\}}^{(-1)} (u), F_{\max\{\gamma_1,\gamma_2\}}^{(-1)} (u)  \} \le F_{\max\{\sigma_1,\sigma_2\}}^{(-1)} (u)+ F_{\max\{\gamma_1,\gamma_2\}}^{(-1)} (u)    \]
Since the rhs is integrable as in \eqref{eq::explosive-crit-2}, so is the left-hand side. Hence, the process $(h_X, F_{\max\{\sigma, \gamma\}}, 0, \infty)$ is also explosive. 
Clearly  $\min\{\sigma, \gamma\}\ {\buildrel d \over \le }\ \max\{\sigma, \gamma\}$ and hence by Theorem \ref{thm::comp1} $(h_X, F_{\min\{\sigma, \gamma\}}, 0, \infty)$ explodes as well.
For the sum, it holds that $\sigma+\gamma \le 2 \max\{ \sigma, \gamma\}$, and hence
\[ F_{\sigma+\gamma}^{(-1)}(u) \le F_{\max\{\sigma,\gamma\}}^{(-1)}(u/2).\]
Integrability of the rhs  as in \eqref{eq::explosive-crit-2} implies integrability of the lhs and so the process $(h_X, F_{\sigma+\gamma}, 0, \infty)$ is also explosive. 
\end{proof}
%
\section{A new proof of the `sufficient part' of Theorem \ref{thm::amidev2}}\label{s::newproof}
In this section, we give a new proof for the sufficient part of Theorem \ref{thm::amidev2} for plump power-law distributions. Note that the proof in \cite{AmiDev13} holds for all plump distributions not just plump power-laws, so the proof presented here is less general. However, since in nature it is quite different,  it reveals a different aspect of explosive branching processes.
It provides a thinning argument and thus it enables us to apply it later for processes with incubation periods as well.

The idea of the proof originates  from the work of \cite{Sev67} where he applied this thinning method on the BP, however, with different choices of retain probabilities, obtaining thus weaker results.
Thus, in this section we re-prove the `sufficient' direction of Theorem \ref{thm::amidev2}. This is the harder direction for plump power-laws.

\begin{theorem}[Weaker version of Theorem \ref{thm::amidev2}]\label{thm::amidev-re}
Given a plump offspring distribution $X$ satisfying \eqref{eq::plump}, and a weight distribution $F_\sigma$, the branching process $(h_X, F_\sigma, 0, \infty)$ is min-summable and hence also explosive if there exists an $\ve>0$ and $C>0$ such that
\be\label{eq::explosive-crit-4}\int_{1/\ve}^{\infty} F_\sigma^{(-1)} \left( \e^{-Cu} \right) \frac{1}{u}\mathrm du<\infty.\ee
This criterion is necessary and sufficient for the explosivity of the process when we require that $X$ is a plump \emph{power-law} distribution, as in \eqref{eq::plump2}.
\end{theorem}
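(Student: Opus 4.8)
The plan is to reprove only the sufficiency part ("$\eqref{eq::explosive-crit-4}\Rightarrow$ min-summable and explosive"); for plump power-laws the converse is already Lemma~\ref{lemma::integral}, or alternatively follows by a tail-comparison (Theorem~\ref{thm::comp-tail}) with the reference variable $X''$, $\Pv(X''>x)=x^{-\de}$, together with the easy half of Theorem~\ref{thm::amidev2}. So assume \eqref{eq::explosive-crit-4}, and normalise the plumpness constant to $c=1$, i.e.\ $\Pv(X>x)\ge x^{-(1-\de)}$ for $x\ge x_0$, with $\de>0$. I would perform a generation-dependent pruning of the Galton--Watson tree that simultaneously (i) deletes edges that are too long, so that every surviving ray has summable total length, and (ii) deletes vertices with too few children, so that the surviving subtree still dominates a supercritical Galton--Watson tree. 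Fix $\ga\in(1,\tfrac1{1-\de})$ (possible since $\de>0$) and a large integer constant $\kappa$; set $M_{n+1}:=\lceil M_n^{\ga}\rceil$ with $M_0$ a sufficiently large integer (larger than $x_0$ and $\kappa$, and large enough for the estimates below), so that $M_n\uparrow\infty$ and $M_n\ge M_0^{\ga^n}$. Put $a_{n+1}:=\kappa M_{n+1}^{1-\de}/M_n$ and $t_{n+1}:=F_\sigma^{(-1)}(a_{n+1})$, $t_0:=0$. Then $F_\sigma(t_{n+1})\ge a_{n+1}$ by definition of the generalised inverse and $\Pv(X\ge M_{n+1})\ge M_{n+1}^{-(1-\de)}$ by plumpness, so
\[ F_\sigma(t_{n+1})\,\Pv(X\ge M_{n+1})\ \ge\ \kappa/M_n . \]
Call a vertex $v$ in generation $n\ge1$ \emph{good} if its parent is good, $\sigma_v\le t_n$, and $v$ has at least $M_n$ children; call the root good if it has at least $M_0$ children. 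Let $\widehat T$ be the subtree spanned by the good vertices.

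The first thing to verify is $\sum_n t_n<\infty$, and here one uses the slack in the hypothesis. Writing $\rho:=1-\ga(1-\de)>0$ and using $M_{n+1}\le 2M_n^{\ga}$ and $M_n\ge M_0^{\ga^n}$, one gets $a_{n+1}\le D\,B^{-\ga^{n}}$ with $D$ a constant ($\ge1$) and $B:=M_0^{\rho}>1$, where $B$ can be made as large as desired by enlarging $M_0$ (this also forces $a_{n+1}<1$ for all $n$, so the $t_n$ are finite). The integral condition \eqref{eq::explosive-crit-4}, though stated for one $C>0$, holds for \emph{every} $C>0$ after rescaling $u$; and since $u\mapsto F_\sigma^{(-1)}(\e^{-Cu})$ is non-increasing, the substitution $u=\ga^{v}$ shows that $\sum_n F_\sigma^{(-1)}(\e^{-C\ga^{n}})$ is bounded by a constant multiple of $\int F_\sigma^{(-1)}(\e^{-Cu})\,\tfrac{\mathrm du}{u}<\infty$. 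Taking $C=\tfrac12\log B$ and absorbing $D$ into finitely many terms gives $\sum_n F_\sigma^{(-1)}(a_n)=\sum_n t_n<\infty$.

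Next I would show $\widehat T$ is supercritical. Conditionally on a good vertex $v$ in generation $n$ --- equivalently, conditionally on $v$ having at least $M_n$ children --- the birth-times of the children of $v$ are i.i.d.\ $F_\sigma$-distributed and their offspring counts are i.i.d.\ copies of $X$, all mutually independent; hence the number of good children of $v$ stochastically dominates $\mathrm{Bin}(M_n,\kappa/M_n)$. An elementary estimate gives $\Pv(\mathrm{Bin}(M,\kappa/M)\ge 2)\ge 1-(1+\kappa)\e^{-\kappa/2}$ for every $M\ge\kappa$, and fixing $\kappa$ large enough makes this uniform bound $\theta$ exceed $\tfrac12$. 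Since the subtrees below distinct good vertices of one generation are independent, $\widehat T$ --- conditioned on the positive-probability event $\{X_{\mathrm{root}}\ge M_0\}$ that its root exists --- stochastically dominates a Galton--Watson tree whose offspring law is $2\cdot\mathrm{Bern}(\theta)$, with mean $2\theta>1$. Therefore $\Pv(\widehat T\text{ infinite})>0$.

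Both conclusions then follow. On $\{\widehat T\text{ infinite}\}$ (which is contained in the survival event and has positive probability) choose an infinite ray $x_1x_2\cdots$ of good vertices; then $\tau_{x_1\cdots x_n}=\sum_{j\le n}\sigma_{x_1\cdots x_j}\le\sum_{j\le n}t_j\le\sum_j t_j<\infty$, so there is an infinite ray of finite total length and Lemma~\ref{lemma::rays} yields explosion. Moreover the minimal generation-$k$ birth-time over \emph{all} vertices is $\le\sigma_{x_1\cdots x_k}\le t_k$, so $\sum_k\min\{\sigma_{k,1},\dots,\sigma_{k,Z_k}\}\le\sum_k t_k<\infty$ on this event; as min-summability is a $0$--$1$ event conditionally on survival (\cite{AmiDev13}) and this event has positive probability, min-summability holds almost surely on survival. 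The delicate point is the design of the pruning: the naive version (keep a level-$n$ edge iff its length is $\le F_\sigma^{(-1)}(1/h(n))$, keep all vertices) provably \emph{fails}, since for heavy-tailed $X$ a single vertex retains at least one child only with probability of order $h(n)^{-(1-\de)}$, too small (doubly exponentially) to sustain a ray --- this is visible from the recursion $q_n=1-h_X\bigl(1-F_\sigma(t_{n+1})q_{n+1}\bigr)$ for the survival probability. One is thus forced to also prune vertices with few children and to keep the two doubly-exponential sequences $(M_n)$ and $(t_n)$ compatible; the fact that makes this possible is the slack $\sum_n F_\sigma^{(-1)}(1/h(n)^{\be})<\infty$ for every fixed $\be>0$ (equivalently, insensitivity of the integral criterion to $C$), which is exactly what lets $M_n$ grow fast enough --- with $\ga$ free to range over all of $(1,\tfrac1{1-\de})$ --- to overcome the shrinking retention probabilities without spoiling $\sum_n t_n<\infty$.
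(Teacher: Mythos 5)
Your proof is correct, and it genuinely differs from the paper's argument in a way worth noting. The paper (Lemma~\ref{lem::sufficient} and Definition~\ref{def::thinning}) performs a pure \emph{edge-length} thinning with retain probability $p_n=\exp\{-C/\beta^n\}$, $\beta=1-\delta/4$, and then analyses the thinned process through the generating-function recursion $W_{n+1}(s)=W_n(p_n g(s))$, using the Tauberian bound $g(s)\ge s^\alpha$ ($\alpha=1-\delta/2<\beta$) to show $W_n(1)\ge\prod_j p_j^{\alpha^j}$, which stays bounded away from $0$ precisely because $\alpha/\beta<1$. You instead perform a \emph{two-criterion} pruning (edge short, \emph{and} offspring count at least $M_n$), turn the retention probability into $\ge\kappa/M_n$ per child, and then sidestep generating functions entirely via a uniform $\mathrm{Bin}(M_n,\kappa/M_n)$ estimate, coupling $\widehat T$ onto a Galton--Watson tree with offspring in $\{0,2\}$. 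Both proofs exploit the same slack --- the integral/sum criterion is insensitive to the value of $C$ (equivalently: one can use $\beta>1-\delta$ in the paper, $\gamma<1/(1-\delta)$ in yours) --- and both establish min-summability and explosion by the same route through Lemma~\ref{lemma::rays}. What your version buys is that it avoids the generating-function algebra and the Tauberian step, replacing them with an elementary binomial bound, and it makes the ``prune vertices with too few children'' idea explicit; this is, in fact, exactly the structure the paper later re-invents for the incubation-time Theorem~\ref{thm::incu-main-forw} (where the thinning is edge-in-a-window plus incubation bound), so your route arguably foreshadows that argument more transparently. One small caveat on your closing remark: what you call the ``naive version'' (thresholds $t_n=F_\sigma^{(-1)}(1/h(n))$) indeed fails, but the paper's thinning is not that naive version --- it already shifts the doubly-exponential base from $1-\delta$ up to $\beta=1-\delta/4$, which is the generating-function-side analogue of your slowing $M_n$ down to growth rate $\gamma<1/(1-\delta)$; so the paper's pure edge-pruning does work once the base is chosen with slack, and your vertex-pruning is an alternative (not the unique) way to realise that slack.
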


Before the proof, we need some preparation.
Recall that the date of birth of an individual $x$ in the BP is denoted by $\tau_x$, while $\sigma_x=\tau_x-\tau_{x_{|-1}}$, the birth-time of $x$, denotes the time difference between the birth date of the individual and its mother.

\begin{definition}\label{def::thinning}
Consider an age-dependent BP with offspring distribution $X$ and birth time distribution $\sigma$. Fix a sequence $(t_n)_{n\in \N}$ with $\sum_{i=1}^\infty t_n:=T<\infty.$
Let us do a generation dependent thinning on the BP as follows: we keep all the offspring of the root and denote their number by $\eta_1$. We delete the whole subtree of an individual $x\in G_1$ if $\sigma_x>t_1$, and we denote the total number of individuals that are not deleted in $G_2$ by $\eta_2$. Similarly, we delete the whole subtree of an individual $x \in G_n$ if $\sigma_n>t_n$ and denote the total number of individuals in $G_{n+1}$ that have not been deleted in any of the previous steps by $\eta_{n+1}$. Let us denote the sub-branching process of non-thinned vertices by $BP_\eta$.
\end{definition}
The next claim is elementary.
\begin{claim}\label{cl::Wn}
Let $H_n(s):=\Ev[s^{\eta_n}]$ be the generating function of $\eta_n$, and let us write $p_n:=F_\sigma(t_n)$ for the retention probability of a subtree of a vertex in $G_n$. 
Then $W_n(s)=1-H_n(1-s)$ satisfies the recursion
\be\label{eq::wn-recursion} W_{n+1}(s)= W_n(p_n g(s))\ee
with $g(s):=1-h_X(1-s)$, $W_0(s):=s$ and $p_0:=1$.
For the power-law distribution $h_\al(s):=1-(1-s)^\al$,
\be\label{eq::wn-alpha} W^{(\al)}_{n}(s)= p_1^\al p_2^{\al^2} \dots p_{n-1}^{\al^{n-1}} s^{\al^n}=s^{\al^n} \prod_{i=1}^{n-1} p_{i}^{\al^i}.\ee
\end{claim}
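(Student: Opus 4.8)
The plan is to observe that the thinned sequence $(\eta_n)_{n\ge 0}$ is itself a time-inhomogeneous Galton--Watson process and then to translate its one-step generating-function recursion into the claimed recursion for $W_n$. First I would record the precise survival rule: a vertex $z\in G_{n+1}$ contributes to $\eta_{n+1}$ iff none of its ancestors $z_{|k}$, $k=1,\dots,n$, was deleted, i.e.\ iff $\sigma_{z_{|k}}\le t_k$ for every $k\le n$. The key structural point is that whether a vertex $x\in G_n$ survives the first $n-1$ thinning steps is measurable with respect to the offspring counts of generations $0,\dots,n-1$ together with the edge-weights $\sigma_{x_{|1}},\dots,\sigma_{x_{|n-1}}$ on the path from the root down to the \emph{mother} of $x$; it involves neither $\sigma_x$ nor the reproduction process of $x$ itself. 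Consequently, conditionally on the first $n$ generations and the weights used up to that point (in particular on $\eta_n$ and on the identity of the surviving vertices of $G_n$), the pairs $(X_x,\sigma_x)$ over surviving $x\in G_n$ are i.i.d., with $X_x\sim X$ independent of $\sigma_x\sim\sigma$, where $X_x$ is the number of children of $x$ as in \eqref{eq::xi-agedep}.

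With this in hand, the subtree of a surviving $x\in G_n$ is kept at step $n$ iff $\sigma_x\le t_n$, an event of probability $p_n=F_\sigma(t_n)$, so
\[
\eta_{n+1}=\sum_{\substack{x\in G_n\\ x\ \mathrm{survives}}} X_x\,\ind_{\sigma_x\le t_n}
\]
is, conditionally on $\eta_n=m$, a sum of $m$ i.i.d.\ copies of $X\cdot\ind_{\sigma\le t_n}$, whose generating function is $\psi_n(s):=\E\!\left[s^{X\ind_{\sigma\le t_n}}\right]=(1-p_n)+p_n h_X(s)$. Hence $(\eta_n)$ is the inhomogeneous Galton--Watson chain with $\eta_0=1$ and $H_{n+1}(s)=H_n(\psi_n(s))$; note that $\eta_1=X$ forces $H_1=h_X$, consistent with the conventions $p_0=1$, $W_0(s)=s$.

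Next I would substitute $W_n(s)=1-H_n(1-s)$, equivalently $H_n(u)=1-W_n(1-u)$, into $H_{n+1}(s)=H_n\big((1-p_n)+p_n h_X(s)\big)$. Using $(1-p_n)+p_n h_X(1-s)=1-p_n\big(1-h_X(1-s)\big)=1-p_n g(s)$ with $g(s)=1-h_X(1-s)$, this becomes $W_{n+1}(s)=1-H_n\big(1-p_n g(s)\big)=W_n\big(p_n g(s)\big)$, which is \eqref{eq::wn-recursion}, with base case $W_0(s)=1-H_0(1-s)=s$. Finally, for $h_X=h_\al$ with $h_\al(s)=1-(1-s)^\al$ one computes $g(s)=1-h_\al(1-s)=1-(1-s^\al)=s^\al$, so the recursion reads $W^{(\al)}_{n+1}(s)=W^{(\al)}_n(p_n s^\al)$; a one-line induction starting from $W^{(\al)}_0(s)=s$ and using $(p_n s^\al)^{\al^n}=p_n^{\al^n}s^{\al^{n+1}}$ yields \eqref{eq::wn-alpha}.

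I do not expect a genuine obstacle here --- the claim is flagged as elementary --- but the one step I would be careful to write out, rather than assert, is the conditional-independence statement of the first paragraph: for $x\in G_n$ the weight $\sigma_x$ is independent of the event that $x$ survived the first $n-1$ thinning rounds, because that event only sees the weights along the path to $x$'s mother. This is precisely what makes $(\eta_n)$ a bona fide Markov branching process and legitimizes the passage to $H_{n+1}=H_n\circ\psi_n$.
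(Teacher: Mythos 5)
Your proof is correct and follows essentially the same route as the paper's: both rest on the one-step recursion $H_{n+1}(s)=\Ev\big[(p_n h_X(s)+1-p_n)^{\eta_n}\big]=H_n\big(1-p_n(1-h_X(s))\big)$, followed by the substitution $W_n(s)=1-H_n(1-s)$ and the observation $g(s)=s^\al$ in the power-law case. The only difference is that you spell out the conditional-independence justification (that the survival event for $x\in G_n$ depends only on weights along the path to $x$'s mother and hence is independent of $(\sigma_x,X_x)$), which the paper leaves implicit.
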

\begin{proof}
If we keep a vertex in $G_n$, because $\sigma<t_n$, then all the children of that vertex belong to $\eta_{n+1}$. Hence
the generating function $H_{n+1}$ of $\eta_{n+1}$ satisfies
\[ H_{n+1}(s)=\Ev[ \left( p_n h_X(s) + 1-p_n \right)^{\eta_n}] =H_n(1- p_n(1-h_X(s))).\]
Then, with $g(s)=1-h_X(1-s)$ and $W_{n}(s)=1-H_n(1-s)$ it is elementary to derive \eqref{eq::wn-recursion}. The values of $W_0$ and $p_0$ are choices that correspond to the proper initialisation. The second statement, \eqref{eq::wn-alpha} is then elementary calculation, since $1-h_\al(1-s)=s^\al$.
\end{proof}

The next lemma is the core of the proof of Theorem \ref{thm::amidev-re}.
\begin{lemma}\label{lem::sufficient}
Suppose that $\sigma$ satisfies the integrability criterion \eqref{eq::explosive-crit-2} in Lemma \ref{lemma::integral} and $X$ is a plump distribution with
lower bound $1-\delta\in (0,1)$ on the power-law exponent, as in \eqref{eq::plump}. Let the
 retention probabilities be defined as $p_n=1/\exp\{-C/(1-\delta/4)^n\}$. 
  Then, for a sufficiently large $C>0$, the thinned process $BP_\eta$ of $(h_X, F_\sigma, 0, \infty)$ is supercritical in the sense that $\lim_{n\to \infty} \Pv(\eta_n= 0)=\Pv(BP_\eta \text{\ goes extinct\,}) <1$. Further, all the individuals in $BP_\eta$ can be reached in finite time from the root. \end{lemma}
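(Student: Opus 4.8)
The plan is to analyze the thinned process $BP_\eta$ by tracking the sequence $W_n(s) = 1 - H_n(1-s)$ from Claim \ref{cl::Wn}, and to show that with the given retention probabilities $p_n = \exp\{-C/(1-\delta/4)^n\}$ the limit $W_\infty(s) := \lim_{n\to\infty} W_n(s)$ is strictly positive for $s \in (0,1]$, which is exactly the statement that the extinction probability $\lim_{n\to\infty}\Pv(\eta_n = 0) = 1 - W_\infty(1)$ is strictly less than $1$. First I would reduce to the power-law case: since $X$ is plump with lower-bound exponent $1-\delta$, we have $\Pv(X > x) \ge x^{-(1-\delta)}$ for $x > x_0$, and by a standard comparison (stochastically dominating $X$ from below by a genuine power-law variable $X'$ with tail exponent $\al := 1-\delta/2$, say, truncated below $x_0$) it suffices to prove supercriticality for $h_{X'}$. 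Here one uses that $g(s) = 1 - h_X(1-s)$ is monotone in the offspring distribution and that the recursion \eqref{eq::wn-recursion} preserves this monotonicity; the small loss in the exponent (from $1-\delta$ to $1-\delta/2$ or to $1-\delta/4$) is what the slack in the definition of $p_n$ is there to absorb.

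Next, with $X'$ a power-law of exponent $\al$, Claim \ref{cl::Wn} gives the closed form $W_n^{(\al)}(s) = s^{\al^n}\prod_{i=1}^{n-1} p_i^{\al^i}$. Taking logarithms,
\[
\log W_n^{(\al)}(s) = \al^n \log s + \sum_{i=1}^{n-1} \al^i \log p_i = \al^n \log s - C\sum_{i=1}^{n-1} \frac{\al^i}{(1-\delta/4)^i}.
\]
The key point is the choice of constants: with $\al = 1-\delta/2 < 1-\delta/4$, the ratio $\al/(1-\delta/4) < 1$, so the geometric series $\sum_i (\al/(1-\delta/4))^i$ converges to a finite value, call it $S_{\al,\delta}$, independent of $n$. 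Hence $\log W_n^{(\al)}(s) \to -C\,S_{\al,\delta} > -\infty$, so $W_\infty^{(\al)}(s) = \exp\{-C\,S_{\al,\delta}\} \in (0,1)$ for every $s \in (0,1]$. This is where the integrability criterion \eqref{eq::explosive-crit-2} re-enters: I must check that the prescribed $p_n = F_\sigma(t_n)$ are consistent with an honest choice of summable thresholds $t_n$, i.e.\ that setting $t_n := F_\sigma^{(-1)}(p_n) = F_\sigma^{(-1)}(\exp\{-C/(1-\delta/4)^n\})$ yields $\sum_n t_n < \infty$. But by the computation in the proof of Lemma \ref{lemma::integral} (comparing the sum $\sum_n F_\sigma^{(-1)}(e^{-C/\beta^n})$ with the integral $\int F_\sigma^{(-1)}(e^{-Cu})\,\tfrac1u\,\dd u$ via \eqref{eq::bound11}--\eqref{eq::bound12}, with base $\beta = 1-\delta/4$), this sum is finite precisely because $\sigma$ satisfies \eqref{eq::explosive-crit-2}. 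So the thinning procedure of Definition \ref{def::thinning} is well-defined with $T = \sum_n t_n < \infty$.

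Finally, the statement ``all individuals in $BP_\eta$ can be reached in finite time from the root'' is essentially built into the construction: if $x \in G_{n+1}$ survives the thinning, then by definition every ancestor edge satisfies $\sigma_{x_{|k}} \le t_{k-1}$ for $k = 1,\dots,n+1$ (with $t_0 = \infty$, or rather the root's children are all kept), so the date of birth $\tau_x = \sum_{k=1}^{n+1} \sigma_{x_{|k}} \le t_0 + \sum_{k=1}^{n} t_k \le T < \infty$; since $W_\infty(1) > 0$, $BP_\eta$ survives with positive probability, and on survival it contains an infinite ray, every vertex of which is born before time $T$. By Lemma \ref{lemma::rays} this forces the explosion time $V \le T < \infty$ with positive probability, and min-summability follows since $\sum_n \min\{\sigma_{n,1},\dots,\sigma_{n,Z_n}\}$ is dominated on the surviving ray by $\sum_n t_n = T$. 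I expect the main obstacle to be the bookkeeping in the reduction step — making the domination $X' \preceq X$ precise near the truncation point $x_0$ and verifying that all three exponents ($1-\delta$ for the true tail bound, $\al = 1-\delta/2$ for the comparison power-law, $1-\delta/4$ in the definition of $p_n$) can be chosen consistently so that both the geometric series converges \emph{and} the resulting $p_n$ still satisfy $p_n \le F_\sigma(t_n)$ for the summable $t_n$ supplied by \eqref{eq::explosive-crit-2}; everything else is the closed-form computation above.
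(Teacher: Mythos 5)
Your plan — reduce to a pure power-law offspring $h_\al(s)=1-(1-s)^\al$, use the closed form $W_n^{(\al)}(s)=s^{\al^n}\prod_{i<n}p_i^{\al^i}$ from Claim \ref{cl::Wn}, and check $\sum_n t_n<\infty$ via Lemma \ref{lemma::integral} with base $\beta=1-\delta/4>\al$ — is exactly the paper's approach, and the closed-form computation and summability check are correct. But the reduction step you flag as ``the main obstacle'' is genuinely wrong as stated, and it is not merely bookkeeping.

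You propose to stochastically dominate $X$ \emph{from below} by a power-law $X'$ so that $h_{X'}\ge h_X$ everywhere, and then plug the closed form $W_n^{(\al)}$ in as a lower bound. This cannot work globally: the Sibuya-type variable $Y$ with $h_Y(s)=1-(1-s)^\al$ satisfies $\Pv(Y=0)=h_Y(0)=0$, so $Y\ge 1$ a.s., while a generic plump $X$ has $\Pv(X=0)>0$; hence $Y\preceq X$ is impossible. If you truncate or zero out $X'$ below $x_0$ to restore the domination, the resulting generating function is no longer $1-(1-s)^\al$, so Claim \ref{cl::Wn}'s closed form does not apply to it, and you are back where you started. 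What the plump hypothesis actually gives (via Karamata's Tauberian theorem, \eqref{eq::gen-alpha}) is the \emph{local} bound $1-h_X(s)\ge(1-s)^{\al}$ for $s\ge s_0$, i.e.\ $g(s)\ge s^\al$ only on $[0,1-s_0]$. The recursion $W_{n+1}(s)=W_n(p_n g(s))\ge W_n(p_n s^\al)$ can therefore be iterated only as long as the cumulative argument $s^{\al^{k+1}}\prod_{j=1}^k p_{n-k+j}^{\al^j}$ stays inside $[0,1-s_0]$; this is precisely the constraint \eqref{eq::cond-s0}, and ensuring it holds for all $k$ is the reason the paper chooses $C$ large enough that $\exp\{-\frac{C}{\beta}\frac{1}{1-\al/\beta}\}<1-s_0$ (condition \eqref{eq::p1crit}). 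Your sketch silently assumes this range check away by invoking a global stochastic domination that does not exist. The rest of the argument — the geometric series $\sum_i(\al/\beta)^i<\infty$, the finiteness of $T=\sum_n t_n$, and the observation that survivors of the thinning are born before time $T$ so Lemma \ref{lemma::rays} gives explosion — matches the paper.
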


\begin{proof}
Since $F_\sigma$ satisfies the integrability criterion \eqref{eq::explosive-crit-2}, following the proof of  Lemma \ref{lemma::integral}, (in particular the bounds in \eqref{eq::bound11}, \eqref{eq::bound12}), the series
\[ \sum_{i=1}^\infty F_\sigma^{(-1)} \left( \exp\{-C/ \beta^n\} \right):=T(\beta, C)<\infty\]
 for all $C>0$ and $\beta\in(0,1)$.

 Since $X$ is a plump distribution, the lower bound in \eqref{eq::plump} holds with some $\delta>0$. So, by Karamata's Tauberian theorem \cite[Theorem 1.7.1]{BiGoTe87}, 
$1-h_X(s)\ge (1-s)^{1-\delta}$ if $s$ is sufficiently close to $1$. This means that we can set $\alpha:=1-\delta/2$, and define $Y_\al$ with $h_Y(s)=1-(1-s)^\al$ such that $h_X(s)\le h_Y(s)$ holds whenever $s \in [s_0, 1]$, for some $s_0<1$. This further implies $g(s)=1-h_X(1-s)\ge s^\al$ in $[0, 1-s_0]$. Set $\beta:=1-\delta/4$, and
 $C>0$ so large that 
 \be\label{eq::p1crit} \exp\left\{- \frac{C}{\beta} \frac{1}{1-\al/\beta} \right\} <1-s_0\ee
 holds. Further, we choose the thinning tresholds to be $t_n:=F_\sigma^{(-1)} \left( \exp\{-C/ \beta^n\} \right)$ yielding that the retention probability of a subtree of a vertex in $G_n$ is $p_n=F_\sigma(t_n)=\exp\{-C/\beta^n\}$. We apply the thinning to the BP $(h_X, F_\sigma, 0, \infty)$ as described in Definition \ref{def::thinning}.
 Note that in this case the function $W_n$ defined in Claim \ref{cl::Wn} can be bounded from below by 
 \[ W_{n+1}(s)=W_n(p_n g(s))\ge W_n(p_n s^\al)\]
 as long as $s\in [0, 1-s_0]$.
 Continuing the recursion for $k$ steps, we obtain
 \[W_{n+1}(s)\ge W_{n-k}(p_{n-k} p_{n-k+1}^\al \dots p_n^{\al^k} s^{\al^{k+1}}) \]
 and we can continue to lower bound $W_{n-k}$ by using $W_{n-k-1}$ as long as
 \be\label{eq::cond-s0} s^{\al^{k+1}}\prod_{j=1}^k p_{n-k+j}^{\al^j} \in [0, 1-s_0] \ee
 holds,
 since in this case the argument of $g(\cdot)$ in the next step will be again in $[0, 1-s_0]$ and so the lower bound on $g$ is still valid.
  We calculate using that $p_n=\exp\{-C/\beta^n\}$, and that $\al<\beta$,
 \[ \prod_{j=1}^k p_{n-k+j}^{\al^j}= \exp\Big\{ -\frac{C}{\beta^{n-k}} \sum_{j=1}^{k} (\al/\beta)^j\Big\} \le \exp\left\{- \frac{C}{\beta^{n-k}} \frac{1}{1-\al/\beta}\right\}.\]
 As a result, assuming that $C$ is large enough for  \eqref{eq::p1crit} to hold, \eqref{eq::cond-s0} holds for all $k$ and $s \in [0, 1]$. Thus, we can apply the recursive step $n$ times to obtain that
  \[ W_{n+1}(s) \ge s^{\al^{n+1}}\prod_{j=1}^n p_{j}^{\al^j}=W_{n+1}^{(\al)}(s) \]
  holds for all $s \in [0, 1]$ (see \eqref{eq::wn-alpha}).
 
Now we show that the thinned process $BP_\eta$ is supercritical with these thinning probabilities. 
Note that $W_n(1)=\Pv(\eta_n>0)$, is a decreasing sequence of $n$ and as a result $\lim_{n\to \infty}W_n(1):=W(1)=\Pv(\eta_\infty>0)$ exists and equals the survival probability of the sub-branching process $BP_\eta$. Thus, if we show that $W(1)=\lim_{n\to \infty} \Pv(\eta_n \neq 0)>0$ then the thinned process $BP_\eta$ is \emph{supercritical} in the sense that it survives forever with positive probability. 

Finally, all non-deleted individuals in $G_n$ are accessible from the root by a path of length at most $\sum_{i=1}^{n-1} t_i<T(\beta, C)<\infty$. 
Note that in this case 
\[ F(T,s):=\Ev[s^{\sum_{x\in BP}\ind_{\tau_x\le T}}]\]
satisfies that $F(T, 1)<1$, that is, the process is explosive. Indeed, with probability $W(1)$, there is at least one infinite ray $x\in G_\infty \cup \CD(T)$ and as a result $D(T)=\infty$ with probability $W(1)$.

It is left to show that $\lim_{n\to \infty} W_n(1)>0$. 
For this we calculate the lower bound
\[ W_n(1) \ge W_n^{(\al)}(1)= \prod_{j=1}^{n-1} p_{j}^{\al^j} = \exp\Big\{- \frac{C}{\beta} \sum_{j=0}^{n-1} (\al/\beta)^j\Big\}\to  \exp\Big\{ \frac{C}{\beta} \frac{1}{1-\al/\beta} \Big\}>0,\]
since $\al/\beta<1$. 
\end{proof}
\begin{remark}\normalfont
The core of this thinning argument is that the integral criterion in Lemma \ref{lemma::integral} does not depend on the precise value of $C$ or on the powers $\beta$ (or $\alpha$). This is reflected in the fact that the explosion is robust in the sense that if a lifetime distribution $\sigma$ explodes for one value of power-law $\al$ then it explodes for all values, and hence a thinner-tailed BP still explodes. This is essentially the argument also used in the proof of Theorem \ref{thm::amidev2} in the paper \cite{AmiDev13}. There, they develop an algorithm that thins the BP in a degree-dependent way. The thinning leaves a sub-BP in which the degrees grow according to a thinner tail power-law, and that BP is still explosive. 
\end{remark}
\begin{proof}[Proof of Theorem \ref{thm::amidev-re}]
Suppose $X$ is a plump distribution and $\sigma$ satisfies the integrability criterion in Lemma \ref{lemma::integral}. Lemma \ref{lem::sufficient} then shows that the integral criterion is sufficient for the explosion of the process. To finish the proof, we need to show that once $X$ follows a plump \emph{power-law}, 
it is necessary as well. For this we use the upper bound on the distribution function of $X$, as in \eqref{eq::plump2}. 
This bound ensures that one can couple $X$ to  a random variable $Y$ that has tail probabilities $\Pv(Y\ge x)=C/x^\delta$ such that $X$ is stochastically dominated by $Y$. And/or, one can use a Tauberian theorem to say that $h_X(s)\ge 1-(1-s)^{\delta}$ in a sufficiently small neighborhood of $1$. Either way, by comparison Theorem \ref{thm::comp-tail}, the explosivity of the BP $(h_Y, F_\sigma, 0, \infty)$  is necessary for the explosivity of $(h_X, F_\sigma, 0, \infty)$.

We argue that the latter is conservative when  the integral criterion \eqref{eq::explosive-crit-3} is not met. Here the easiest argument is the same as the one in the proof of Theorem \ref{thm::amidev2} in \cite{AmiDev13}: the size $Z_k$ of generation $k$ of the Galton-Watson BP  with offspring distribution $Y$ grows double-exponentially. More precisely, by \cite{Dav78}, there exists a random variable $V$ such that  
\[ \delta^k \log (\min\{1, Z_k\}) \toas V, \]
and $V>0$ on survival.
As a result, for some $K>0$, for all $k\ge K$, $Z_k\le \exp\{ 2 V \delta^{-k}\}$ holds for some $V>0$. 
Now we add the i.i.d.\ edge-weights to the edges of the GW tree, and set $a_k:= \exp\{ 2 V \delta^{-k}.\}$
Clearly, the process is conservative if the sum of the minimum edge-weight in each generation diverges. 
This minimum is then at least 
\be\label{eq::min-sum-2} \sum_{k=K}^{\infty} \min\{\sigma_{k,1}, \sigma_{k,2}, \dots,  \sigma_{k, a_k}\},\ee
since in each generation there are at most $a_k$ individuals. Thus, divergence of the sum in \eqref{eq::min-sum-2} ensures that the BP $(h_X, F_\sigma, 0, \infty)$ is conservative. This sum is divergent precisely when the sum $\sum_{k=K}^\infty F_\sigma^{(-1)}(1/a_k)$ diverges: for this see the proof of \cite[Corollary 4.3]{AmiDev13}. Finally, the equi-convergence of this sum and the integral in Lemma \ref{lemma::integral} is precisely the content of Lemma \ref{lemma::integral}.
\end{proof}

\section{Processes with incubation period}\label{s::incu}
We have seen in Theorem \ref{thm::incu-matter} that the explosivity of both $(h_X, F_\sigma, 0, \infty)$ and $(h_X, F_I, 0, \infty)$ is necessary for the explosivity of the incubation model $(h_X, F_\sigma, F_I, \infty)$. In this section we aim to show the reverse direction, i.e., that it is also sufficient.
The crucial problem to overcome is the following: by superimposing an incubation time on the branching process, exactly the \emph{short} edges are killed.
In fact, the shorter the edge the more likely that it is deleted when incubations come into the picture. This means that the thinning of the BP is not an independent binomial thinning, even in the case of the (simpler) backward process, where the birth-times of children are independent. 
 
 The outline of the proof is the following: first we prove that the backward process explodes, given the explosivity of the two BP-s  $(h_X, F_\sigma, 0, \infty)$ and $(h_X, F_I, 0, \infty)$. The proof is based on showing that for the new birth-time distribution, the summability criterion in \eqref{eq::explosive-crit-1} (integral criterion) stays valid. 
 
 For the forward process, we use a different argument: we develop a generation based thinning of the BP, similar to the one in the proof of Theorem \ref{thm::amidev-re}.
 
Our first goal is thus to prove the following:
\begin{theorem}\label{thm::incu-main}
Consider an epidemic model with offspring distribution $X$ that follows a plump distribution, birth-time distribution $\sigma$ and incubation time distribution $I$. Then, the \emph{backward} process $(h_X, F_\sigma, F_I, \infty)^b$ explodes \emph{if and only if} both the processes $(h_X, F_\sigma, 0, \infty)$ and $(h_X, F_I, 0, \infty)$ explode. 
\end{theorem}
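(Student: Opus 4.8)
I would split the proof into the easy ``only if'' direction and the substantial ``if'' direction.

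\smallskip
\noindent\emph{The ``only if'' direction.} This is nothing but the contrapositive of Theorem~\ref{thm::incu-matter}: if either $(h_X,F_\sigma,0,\infty)$ or $(h_X,F_I,0,\infty)$ were conservative, that theorem would force $(h_X,F_\sigma,F_I,\infty)^b$ to be conservative. So from now on I assume both age-dependent processes explode and aim to show the backward process does too. Throughout I may assume $F_\sigma(0)=F_I(0)=0$ and $\inf\mathrm{supp}\,F_\sigma=\inf\mathrm{supp}\,F_I=0$ (otherwise one of the age-dependent processes is conservative by Theorem~\ref{thm::mass-zero}).

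\smallskip
\noindent\emph{Reduction to a birth-time criterion.} The key structural observation is that the backward process is itself an ordinary age-dependent BP. Since in $(h_X,F_\sigma,F_I,\infty)^b$ the pairs $(I_i,\sigma_i)$ are i.i.d.\ with $I_i$ independent of $\sigma_i$, the total progeny $\xi(\infty)=\sum_{i=1}^X\ind_{\sigma_i\ge I_i}$ has generating function $h_{\widehat X}(s)=h_X(1-p(1-s))$ with $p:=\Pv(\sigma\ge I)=\int F_I\,\mathrm dF_\sigma$, and, conditionally on the set of retained indices, the retained birth-times are i.i.d.\ with law $\widehat F(t):=\Pv(\sigma\le t\mid\sigma\ge I)=\tfrac1p\int_{[0,t]}F_I\,\mathrm dF_\sigma$. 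Under the standing assumptions $p>0$ (it is the expectation of the a.s.\ positive variable $F_I(\sigma)$), and a Chernoff bound shows that binomial thinning preserves \eqref{eq::plump}, so $\widehat X$ is again plump. Hence, by the sufficiency part of Theorem~\ref{thm::amidev-re} (which needs only plumpness of the offspring), it is enough to verify that $\widehat F$ satisfies the integral criterion \eqref{eq::explosive-crit-4}, i.e.\ $\int_{1/\ve}^\infty\widehat F^{(-1)}(\e^{-Cu})\,\tfrac{\mathrm du}{u}<\infty$ for some $\ve,C>0$. For a plump power-law $X$ the explosivity hypotheses give the same criterion \eqref{eq::explosive-crit-4} for $F_\sigma$ and for $F_I$ by Theorem~\ref{thm::amidev-re}; for a general plump $X$ one works instead with min-summability with respect to the Galton--Watson tree of $X$ (Theorem~\ref{thm::amidev1}), but the structure of the estimate below is the same.

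\smallskip
\noindent\emph{The key estimate.} Everything now rests on bounding $\widehat F^{(-1)}$ in terms of $F_\sigma^{(-1)}$ and $F_I^{(-1)}$. I would start from the elementary inequality, valid for every $a\le t$,
\[
\widehat F(t)\,p=\int_0^t F_I\,\mathrm dF_\sigma\ \ge\ F_I(a)\bigl(F_\sigma(t)-F_\sigma(a^-)\bigr),
\]
choose $a$ of order $F_I^{(-1)}(\e^{-u/2})$ (so $F_I(a)\ge\e^{-u/2}$) and then $t$ just large enough that $F_\sigma(t)-F_\sigma(a)\ge p\,\e^{-u/2}$, obtaining a bound of the form
\[
\widehat F^{(-1)}(\e^{-u})\ \le\ F_\sigma^{(-1)}\!\bigl(F_\sigma(F_I^{(-1)}(\e^{-u/2}))+p\,\e^{-u/2}\bigr).
\]
Plugging this into the integral and splitting according to whether $F_\sigma(F_I^{(-1)}(\e^{-u/2}))$ is at most or larger than $p\,\e^{-u/2}$, one is left with the finiteness of $\int F_\sigma^{(-1)}(c\,\e^{-u/2})\,\tfrac{\mathrm du}{u}$ and of $\int F_\sigma^{(-1)}\!\bigl(2F_\sigma(F_I^{(-1)}(\e^{-u/2}))\bigr)\,\tfrac{\mathrm du}{u}$. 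The first is immediate from the criterion for $F_\sigma$, since \eqref{eq::explosive-crit-4} is unaffected by rescaling the rate $C$ and by the substitution $u\mapsto u/2$ (the remark after Lemma~\ref{lemma::integral}); the second, combining the criteria for $F_\sigma$ and $F_I$, is the hard part.

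\smallskip
\noindent\emph{The main obstacle.} The second regime above is exactly the ``surprising'' case: it occurs when $F_\sigma$ has a long flat stretch just above $F_I^{(-1)}(\e^{-u/2})$, i.e.\ when the birth-time $\sigma$ has a very steep distribution function at the origin and so is typically far smaller than the incubation time $I$ and is thinned away with high probability. Controlling $\int F_\sigma^{(-1)}\!\bigl(2F_\sigma(F_I^{(-1)}(\e^{-u/2}))\bigr)\,\tfrac{\mathrm du}{u}$ then needs a more careful, $F_\sigma$- and $F_I$-adapted choice of the split point $a=a(u)$ (and possibly of the exponent $\e^{-cu}$) than the naive one, and this is where I expect the real work to lie. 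Once this estimate is in place, $\widehat F$ satisfies \eqref{eq::explosive-crit-4}, and Theorem~\ref{thm::amidev-re} applied to $(h_{\widehat X},F_{\widehat\sigma},0,\infty)$ yields min-summability and hence explosion of the backward process, completing the ``if'' direction.
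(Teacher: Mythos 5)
Your outline matches the paper's strategy up to, but not including, the crucial step. Like the paper, you (i) use Theorem~\ref{thm::incu-matter} for the ``only if'' direction, (ii) recognize the backward process as an ordinary age-dependent BP whose birth-time law is the (sub-probability) measure $\int_0^t F_I\,\mathrm dF_\sigma$ with thinned offspring, (iii) reduce matters via Theorem~\ref{thm::amidev-re} to an integral criterion for the thinned birth-time distribution, and (iv) start from the same lower bound obtained by cutting the integral at a point $a<t$, namely $\int_0^t F_I\,\mathrm dF_\sigma\ge F_I(a)\bigl(F_\sigma(t)-F_\sigma(a)\bigr)$. You also correctly diagnose the danger zone: if $F_\sigma$ is slowly varying at~$0$ (``very steep at the origin''), the factor $F_\sigma(t)-F_\sigma(a)$ is killed by the cut, and the naive estimate breaks down.

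Where the proposal has a genuine gap is precisely in that hard regime. You state that one should ``choose the split point $a=a(u)$ more carefully'' and declare ``this is where I expect the real work to lie,'' but you do not carry out that analysis, and in fact it is not the route the paper takes. The paper instead fixes a constant $a\in(0,1)$ and splits into two cases according to whether $\limsup_{t\to 0}F_\sigma(at)/F_\sigma(t)<1$ (condition~\eqref{eq::assump}). When \eqref{eq::assump} holds, a single constant $a$ already gives the product bound $F_{I<\sigma}(t)\ge(1-q)F_\sigma(t)F_I(at)$, and integrability follows exactly as in the proof of Theorem~\ref{thm::max-birthtime}. When \eqref{eq::assump} fails, $F_\sigma$ is slowly varying at~$0$; the paper does not refine the choice of $a(u)$ but rather \emph{replaces} $F_\sigma$ by $H_\gamma(t):=t^\gamma F_\sigma(t)$, which by the Karamata representation (Corollary~\ref{cor::slowly-0-prop} and \eqref{eq::epsilon-dom}) satisfies $H_\gamma(\mathrm dt)\le F_\sigma(\mathrm dt)$ near $0$, is regularly varying of positive index (so falls into the easy case), and still obeys the integral criterion of Lemma~\ref{lemma::integral} because it dominates $t^{\gamma+\beta}$ near $0$. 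Explosion of $(h_X,H_\gamma,F_I,\infty)^b$ then transfers to $(h_X,F_\sigma,F_I,\infty)^b$ by Theorem~\ref{thm::comp4}. Without something playing the role of this comparison, your proposal is incomplete at the very point you flag as ``the real work,'' and it is not at all clear that a $u$-dependent split point would do the job.
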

Then, we will show the somewhat weaker 
\begin{theorem}\label{thm::incu-main-forw}
Consider an epidemic model with offspring distribution $X$ that follows a plump power-law distribution, birth time distribution $\sigma$ and incubation time distribution $I$. Then, the forward process $(h_X, F_\sigma, F_I, \infty)$ explodes \emph{if and only if} both the processes $(h_X, F_\sigma, 0, \infty)$ and $(h_X, F_I, 0, \infty)$ both explode, that is, if and only if they satisfy the integrability criterion in \eqref{eq::explosive-crit-1}. This integrability condition is sufficient for explosion of $(h_X, F_\sigma, F_I, \infty)$ when the offspring distribution $X$ is plump.
\end{theorem}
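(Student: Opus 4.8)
The plan is to treat the two implications separately. For necessity, if $(h_X,F_\sigma,F_I,\infty)$ explodes then by Theorem~\ref{thm::incu-matter} both age-dependent processes $(h_X,F_\sigma,0,\infty)$ and $(h_X,F_I,0,\infty)$ explode, and --- when $X$ is a plump power-law --- this is precisely equivalent, by Lemma~\ref{lemma::integral} (equivalently Theorem~\ref{thm::amidev-re}), to the integrability condition \eqref{eq::explosive-crit-2} holding for both $F_\sigma$ and $F_I$. All the content is therefore in the sufficiency direction, which I would prove assuming only that $X$ is plump (so that the criterion \eqref{eq::explosive-crit-1}, equivalently \eqref{eq::explosive-crit-2} in the power-law case, for both $F_\sigma$ and $F_I$ is the hypothesis); this asymmetry is exactly why the full equivalence needs the stronger plump-power-law assumption while the ``if'' part does not. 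The method is a generation-dependent thinning of the underlying tree, in the spirit of Definition~\ref{def::thinning} and the proof of Theorem~\ref{thm::amidev-re}, designed so that surviving rays both respect the incubation constraint and have summable length.

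Concretely, I would fix a constant $\vartheta\in(0,1)$ and a sequence $(t_n)_{n\ge0}$ with $T:=\sum_n t_n<\infty$, to be specified last, and thin the tree of $(h_X,F_\sigma,F_I,\infty)$ by declaring a vertex $x\in G_n$ \emph{active} when $I_x\le\vartheta t_n$ and $\sigma_x\in(\vartheta t_{n-1},t_{n-1}]$ (for the root only the first condition), deleting the subtree of every non-active vertex. Two observations justify this. First, if $x\in G_n$ and its mother $x_{|-1}$ are both active then $I_{x_{|-1}}\le\vartheta t_{n-1}<\sigma_x$, so the contact from the mother to $x$ falls after her incubation window; hence $x$ is genuinely present in the forward epidemic and any infinite active ray is an infinite ray of $(h_X,F_\sigma,F_I,\infty)$. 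Second, along such a ray $\tau_x=\sum_k\sigma_{x_{|k}}\le\sum_k t_{k-1}\le T<\infty$. By Lemma~\ref{lemma::rays} it therefore suffices to produce a choice of $\vartheta$ and $(t_n)$ for which the thinned branching process $BP_\eta$ of active vertices is supercritical.

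Let $\eta_n$ count the active vertices of $G_n$. Conditionally on an active $x\in G_n$, the children of $x$ that are active form an independent binomial thinning of its $X$ offspring with retention probability $p_n:=\big(F_\sigma(t_n)-F_\sigma(\vartheta t_n)\big)\,F_I(\vartheta t_{n+1})$, and these thinnings are independent over the active vertices of $G_n$. As in Claim~\ref{cl::Wn}, the functions $W_n(s):=1-\Ev\big[(1-s)^{\eta_n}\big]$ then satisfy a recursion of the same type, namely $W_{n+1}(s)=W_n\big(g(p_n s)\big)$ with $g(s)=1-h_X(1-s)$. Since $X$ is plump, Karamata's Tauberian theorem (\cite[Theorem~1.7.1]{BiGoTe87}) gives, exactly as in Lemma~\ref{lem::sufficient}, an $\alpha\in(0,1)$ with $g(s)\ge s^\alpha$ for all small $s$; iterating the recursion as there yields $\lim_n W_n(1)>0$ --- i.e.\ $BP_\eta$ is supercritical and, by the previous paragraph, $(h_X,F_\sigma,F_I,\infty)$ explodes --- as soon as the $t_n$ can be chosen so that $\sum_n t_n<\infty$ and $p_n\ge\e^{-C/\beta^n}$ for all large $n$, for some fixed $\beta\in(\alpha,1)$ and $C>0$ (the lower bound being needed only so that $\prod_j p_j^{\alpha^j}$ tends to a positive limit and the iterates stay in the region where $g(s)\ge s^\alpha$ is valid).

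It remains to make this choice, and this is the crux. The factor $F_I(\vartheta t_{n+1})$ is harmless: the criterion for $F_I$ gives $\sum_n F_I^{(-1)}(\e^{-C/\beta^n})<\infty$ for every $C,\beta$, so one can always insist that $\vartheta t_{n+1}\ge F_I^{(-1)}(\e^{-C/\beta^{n+1}})$ without destroying summability. The real difficulty, which has no counterpart in the independent-thinning argument of Theorem~\ref{thm::amidev-re}, is that the admissible birth-time band $(\vartheta t_n,t_n]$ must carry $\sigma$-mass at least $\e^{-C/\beta^n}$ while $\sum t_n$ stays finite, even though $F_\sigma$ may be almost flat just below $t_n$. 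My plan is a quantile/bisection argument: starting from $a_n:=F_I^{(-1)}(\e^{-C/\beta^n})$ (which is summable) and taking $t_n$ to be the smallest scale with $F_\sigma(t_n)\ge F_\sigma(a_n)+\e^{-C/\beta^n}$, i.e.\ $t_n:=F_\sigma^{(-1)}\!\big(F_\sigma(a_n)+\e^{-C/\beta^n}\big)$, then reading off an appropriate $\vartheta$ from the ratio $a_n/t_n$ (or, if $F_\sigma$ has long flat stretches, replacing the fixed ratio $\vartheta$ by a geometric refinement $\vartheta^{k_n}$ chosen by a pigeonhole over the increments $F_\sigma(\vartheta^k t_n)-F_\sigma(\vartheta^{k+1}t_n)$). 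One then verifies $\sum_n t_n<\infty$ using the robustness of \eqref{eq::explosive-crit-2} in the multiplicative constant: when $F_\sigma(a_n)\le\e^{-C/\beta^n}$ one gets $t_n\le F_\sigma^{(-1)}(\e^{-(C/2)/\beta^n})$, which is summable; the complementary regime $F_\sigma(a_n)>\e^{-C/\beta^n}$ --- the incubation quantiles already lie deep inside the $\sigma$-distribution --- is the delicate case, and I expect handling it (by enlarging the constant $C$ used for the $a_n$-sequence so that $a_n$ is pushed below the corresponding $\sigma$-quantile, or by collapsing several consecutive generations into one when a single band is too thin) to be the main technical obstacle of the whole argument. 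Once the thresholds are in place, the conclusion follows by combining Paragraphs above with the already-established machinery (Claim~\ref{cl::Wn}, Lemma~\ref{lem::sufficient}, Lemma~\ref{lemma::rays}), and the remaining ``only if'' for plump power-law $X$ is, as noted, immediate from Theorem~\ref{thm::incu-matter} and Lemma~\ref{lemma::integral}.
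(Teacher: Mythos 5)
Your overall architecture coincides with the paper's: reduce to a power-law $Y$ by Theorem~\ref{thm::comp-tail}, perform a generation-dependent thinning in which a vertex $x\in G_n$ is retained only when $I_x$ is small and $\sigma_x$ falls in a band $(\vartheta t_{n-1},t_{n-1}]$, observe that the parent's incubation time is then automatically shorter than the child's birth-time so surviving rays are genuine infection paths of finite total length, and apply the $W_n$-iteration from Claim~\ref{cl::Wn} and Lemma~\ref{lem::sufficient} to get supercriticality. The necessity argument via Theorem~\ref{thm::incu-matter} and Lemma~\ref{lemma::integral} is also the same.

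However, there is a genuine gap, and you put your finger on it yourself: when $F_\sigma$ is slowly varying at $0$ (in the sense that $F_\sigma(\vartheta t)/F_\sigma(t)\to 1$), the band $(\vartheta t_n,t_n]$ carries a vanishing fraction of $F_\sigma(t_n)$ and there is no uniform lower bound on the retention probability of the required form $\e^{-C/\beta^n}$ with $\sum t_n<\infty$. You correctly identify this as ``the main technical obstacle of the whole argument'' and offer only conjectural fixes (a pigeonhole over $F_\sigma(\vartheta^k t_n)-F_\sigma(\vartheta^{k+1}t_n)$, or collapsing generations), neither of which you carry out, so the proposal does not actually prove the theorem. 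The paper closes this gap by a reduction that precedes the thinning altogether: it distinguishes the dichotomy \eqref{eq::assump} versus its negation, and in the slowly varying case it replaces $F_\sigma$ by $H_\gamma(t):=t^\gamma F_\sigma(t)$, which is regularly varying of index $\gamma$ at $0$ and therefore satisfies \eqref{eq::assump} with $q=a^\gamma$. Using the Karamata representation (Corollary~\ref{cor::slowly-0-prop}) one checks $F_\sigma(\mathrm dt)\ge H_\gamma(\mathrm dt)$ near $0$, so by the density-domination comparison Theorem~\ref{thm::comp4} the explosivity of $(h_Y,H_\gamma,F_I,\infty)$ transfers to $(h_Y,F_\sigma,F_I,\infty)$; and $H_\gamma$ still satisfies the integral criterion because $H_\gamma(x)\ge x^{\gamma+\beta}$ for small $x$. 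After this replacement the retention probability does satisfy the required lower bound $F_I(a\wit t_{n+1})\big(F_\sigma(\wit t_n)-F_\sigma(a\wit t_n)\big)\ge(1-q)F_I(a\wit t_{n+1})F_\sigma(\wit t_{n+1})$, and your thinning computation goes through verbatim. So the proposal is missing exactly one idea --- the $H_\gamma$ comparison via Theorem~\ref{thm::comp4} --- but that idea is load-bearing.
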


\begin{remark}\normalfont
This theorem is somewhat weaker since we lost the `only if' direction when the offspring distribution is plump, i.e., it has strictly heavier tails than any power-law. In fact, the proof of Theorem \ref{thm::incu-main-forw} below could also work to prove Theorem \ref{thm::incu-main}, but again, then we would lose the necessity for plump distributions that are not plump power-laws.
\end{remark}

In the master thesis of L. Gulikers \cite{Gul14}, Gulikers and the author provided a proof of Theorem \ref{thm::incu-main} and Theorem \ref{thm::incu-main-forw} under different conditions. There, we proved that the explosion of $(h_X, F_\sigma, 0, \infty)$ and $(h_X, F_I, 0, \infty)$ is sufficient for the \emph{backward} epidemic model $(h_X, F_\sigma, F_I, \infty)^b$ to explode under the more restrictive assumptions that $X$ has a power-law distribution with parameter $\alpha\in (0,1)$ and 
either

1) $F_I(t)>F_\sigma(t)$ in some open interval around $0$, or

2) $F_I$ and $F_\sigma$ are absolutely continuous and the densities $f_I, f_\sigma $ satisfy $f_I(t) \le f_\sigma(t)$ in some open interval around $0$.

Further, we showed that the explosion of the backward epidemic model $(h_X, F_\sigma, F_I, \infty)^b$ implies the explosion of the \emph{forward} epidemic model $(h_X, F_\sigma, F_I, \infty)$ when $F_\sigma$  is an \emph{ageing distribution} in some neighborhood of the origin, that is, for some $t_0>0$, for all $t\in [0, t_0]$ it holds that

\[ (\sigma -t | \sigma \ge t)\  {\buildrel d,0 \over \le }\  \sigma.\]
In particular, this condition is satisfied if $F_\sigma$ has a density $f_\sigma$ with $f_\sigma(0)=0$ and $f_\sigma$ non-decreasing in some neighborhood of the origin. Here we give a counterexample below in Example \ref{ex::counter}: an absolutely continuous distribution, with full support, that is locally non-monotonous at the origin yet it produces an explosive BP.

Before we start, we need some preparation and recall some basic theory about regularly varying functions.
\begin{definition}\label{def::slowvar}
We say that a function $\ell(x)$ is \emph{slowly varying at $0$} if $k(x):=\ell(1/x)$ is slowly varying at infinity, that is, if for all $\lambda>0$,
\[ \lim_{x\to 0} \frac{\ell(\la x)}{\ell(x)}=\lim_{x \to \infty} \frac{k(\la x)}{k(x)} =1.\]
\end{definition}
We say that two functions $f,g$ are asymptotically equivalent if $\lim_{x\to \infty} f(x)/g(x)=1$, and they are  asymptotically equivalent at $0$ if 
$\lim_{x\to 0} f(x)/g(x)=1$. 

The following theorem gives the characterisation of slowly varying functions, by  Karamata \cite{Kar30} that can be found as \cite[Theorem 1.3.1]{BiGoTe87}. 

\begin{theorem}[Karamata]\label{thm::smooth}
Let $k(x)$ be a slowly varying function at infinity. Then $k(x)$ has the representation
\be\label{eq::slow-rep} k(x)=\exp\left\{ c(x) + \int_a^x \frac{\varepsilon(t)}{t} \mathrm dt \right\}\ee
with $\lim_{x\to \infty}c(x)=c$, and $\lim_{x\to \infty}\ve(x)\to 0$. Further, $\ve(x)$ can be chosen to be arbitrarily smooth, and $\ve(x)$ is eventually negative if $\lim_{x\to \infty} k(x) = 0$.
\end{theorem}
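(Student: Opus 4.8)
The plan is to reduce the statement to the additive (logarithmic) scale and then combine the Uniform Convergence Theorem for slowly varying functions with an averaging argument; this is the classical route to \cite[Theorem 1.3.1]{BiGoTe87}, so I only sketch it. Throughout I assume, as is standard in this context, that $k$ is measurable. First I would set $b(y):=\log k(\e^{y})$, so that slow variation of $k$ becomes precisely the statement $b(y+u)-b(y)\to 0$ as $y\to\infty$ for every fixed $u$; by the Uniform Convergence Theorem this holds uniformly for $u$ in compact sets, and the same theorem gives that $k$ is bounded on compact subsets of $[x_{0},\infty)$ for some $x_{0}$, hence $b$ is locally bounded, so locally integrable, on $[y_{1},\infty)$ with $y_{1}=\log x_{0}$. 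This local boundedness is what makes the averaging below legitimate.

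Next I would average. Put $\phi(y):=\int_{y}^{y+1}b(s)\,\mathrm{d}s$ for $y\ge y_{1}$; then $\phi$ is absolutely continuous with $\phi'(y)=b(y+1)-b(y)$, and $b(y)-\phi(y)=-\int_{0}^{1}\big(b(y+u)-b(y)\big)\,\mathrm{d}u\to 0$ by uniform convergence on $u\in[0,1]$. Fixing $y_{2}>y_{1}$ and writing $\phi(y)=\phi(y_{2})+\int_{y_{2}}^{y}\phi'(s)\,\mathrm{d}s$, I obtain
\begin{equation}
b(y)=\big(b(y)-\phi(y)+\phi(y_{2})\big)+\int_{y_{2}}^{y}\big(b(s+1)-b(s)\big)\,\mathrm{d}s ,
\end{equation}
where the bracketed term tends to the constant $\phi(y_{2})$ and the integrand tends to $0$. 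Undoing the change of variables via $x=\e^{y}$, $t=\e^{s}$ (so $\mathrm{d}s=\mathrm{d}t/t$) and setting $a=\e^{y_{2}}$, $c(x):=b(\log x)-\phi(\log x)+\phi(y_{2})$, $\ve(t):=b(\log t+1)-b(\log t)$ gives exactly \eqref{eq::slow-rep}, with $c(x)\to\phi(y_{2})$ and $\ve(t)\to 0$.

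For the smoothness refinement I would simply replace the boxcar average by a smooth one. Choose $\rho\in C_{c}^{\infty}(\R)$ with $\rho\ge 0$, $\int\rho=1$ and $\operatorname{supp}\rho\subset[-1,1]$, and set $b^{*}:=b*\rho$, which is $C^{\infty}$ on $[y_{1}+1,\infty)$. Since $\int\rho=1$ and $\int\rho'=0$ one has both $b^{*}-b=\int\big(b(\cdot-u)-b(\cdot)\big)\rho(u)\,\mathrm{d}u\to 0$ and $(b^{*})'=\int\big(b(\cdot-u)-b(\cdot)\big)\rho'(u)\,\mathrm{d}u\to 0$ by uniform convergence on $\operatorname{supp}\rho$; running the previous rearrangement with $b^{*}$ in place of $\phi$ then produces the representation with $\ve(t)=(b^{*})'(\log t)$, which is $C^{\infty}$ in $t$, while $c(x)=b(\log x)-b^{*}(\log x)+b^{*}(y_{2})$ still converges.

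Finally, for the eventual negativity. If $k(x)\to 0$ then $b(y)\to-\infty$, and since $c(x)$ converges, $E(x):=\int_{a}^{x}\ve(t)/t\,\mathrm{d}t\to-\infty$; moreover the crude bound $\big|\int_{x}^{\lambda x}\ve(t)/t\,\mathrm{d}t\big|\le(\log\lambda)\sup_{[x,\lambda x]}|\ve|$ shows $E$ is locally nearly constant at infinity, so its non-increasing envelope $\widetilde E(x):=\sup_{t\ge x}E(t)$ decreases to $-\infty$ with $\widetilde E-E\to 0$. Mollifying $\widetilde E$ as in the smoothness step (convolution against $\rho\ge 0$ preserves monotonicity) produces a $C^{\infty}$ non-increasing $\widehat E$ with $\widehat E-E$ bounded and $\widehat E'\le 0$; since $k$ is not eventually constant one upgrades ``$\le 0$'' to ``$<0$ eventually'' by subtracting a smooth, arbitrarily slowly decreasing term with convergent logarithmic integral and absorbing the bounded discrepancy into $c(\cdot)$. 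Taking $\ve(t):=t\,\widehat E'(t)$ then yields the last assertion. I expect this last step to be the main obstacle: making the envelope estimate $\widetilde E-E\to 0$ and the strict (rather than merely weak) negativity fully honest genuinely uses the local near-constancy of $b$ on the additive scale — i.e. the Uniform Convergence Theorem and not just pointwise slow variation — which is precisely why the hypothesis $k(x)\to 0$, and not only slow variation, enters here.
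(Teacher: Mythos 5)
The paper itself offers no proof of this theorem: it is imported from Bingham--Goldie--Teugels (Theorem 1.3.1), and the ``eventually negative'' clause is an addendum of the paper that is \emph{not} part of BGT 1.3.1. Your additive-scale averaging ($b=\log k\circ\exp$, $\phi=\int_y^{y+1}b$) and the mollifier refinement are correct and coincide with the standard BGT proof, so those parts are fine.

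The eventual-negativity step, however, contains a genuine gap, and it sits exactly where the theorem as stated fails. Your pivotal claim is that $\widetilde E(x)-E(x)\to 0$ for $\widetilde E(x):=\sup_{t\ge x}E(t)$. The justification ``$E$ is locally nearly constant at infinity'' controls $E$ only on compact multiplicative windows $[x,\lambda x]$; the supremum in $\widetilde E$ runs over an \emph{unbounded} range, over which $E$ can oscillate by a fixed amount while still drifting to $-\infty$. A concrete counterexample: take $c\equiv 0$ and
\[
\log k(x)=-\log\log x+2\sin\bigl(\log\log x\bigr),
\qquad
\ve(t)=\frac{-1+2\cos(\log\log t)}{\log t}.
\]
Then $k$ is slowly varying, $k(x)\to 0$, and $\ve(t)\to 0$, but in the variable $v=\log\log x$ one has $E=-v+2\sin v+\mathrm{const}$, which \emph{increases} by $2\sqrt3-\tfrac{2\pi}{3}\approx 1.37$ on every interval $\bigl[\tfrac{5\pi}{3}+2\pi n,\ \tfrac{7\pi}{3}+2\pi n\bigr]$. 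Thus $\widetilde E-E$ oscillates between roughly $0$ and $1.37$ and does not vanish. Worse, no eventually-negative $\ve$ can exist for this $k$: such an $\ve$ would force $E=\log k-c$ to be eventually non-increasing, yet $\log k$ picks up those $O(1)$ increases infinitely often while $c(x)\to c$ cannot supply them. So the clause is false as written, not merely unproved by your argument. What \emph{is} true, and what the paper actually uses in Corollary~\ref{cor::slowly-0-prop}, is only the weaker statement $\int_a^\infty\ve(t)/t\,\mathrm dt=-\infty$ (hence $\limsup_{t\to\infty}|\ve(t)|t^{\beta}=\infty$ for every $\beta>0$); that follows at once from $E(x)=\log k(x)-c(x)\to-\infty$ and needs no monotone-envelope construction at all.
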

This theorem yields the following corollary:
\begin{corollary}\label{cor::slowly-0-prop}
Let $\ell(x)$ be a slowly varying function at $0$ with $\ell(x)=0$. Then $\ell(x)$ has the representation
\be\label{eq::slow-rep2} \ell(x)=\exp\left\{ c(x) + \int_a^{1/x} \frac{\varepsilon(t)}{t} \mathrm dt \right\}\ee
with $\lim_{x\to 0}c(x)=c$, and $\lim_{x\to \infty}\ve(x)\to 0$. 
Further, $\ve(x)$ is eventually negative with $\ve(x)\to 0$ but $\int_0^\infty \ve(t)/t \mathrm dt =-\infty$, hence for all $\beta>0$
\be\label{eq::epsilon-dom} \lim_{t\to \infty} |\ve(t)| t^{\beta} =\infty. \ee
\end{corollary}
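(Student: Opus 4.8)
The plan is to deduce Corollary \ref{cor::slowly-0-prop} from Karamata's representation (Theorem \ref{thm::smooth}) by the substitution $x\mapsto 1/x$. First I would set $k(x):=\ell(1/x)$; by Definition \ref{def::slowvar} the function $k$ is slowly varying at infinity, and since $\lim_{x\to 0}\ell(x)=0$ we have $\lim_{x\to\infty}k(x)=0$. Theorem \ref{thm::smooth} then gives $k(x)=\exp\{c(x)+\int_a^x \ve(t)/t\,\mathrm dt\}$ with $c(x)\to c$, $\ve(x)\to 0$, $\ve$ smooth, and — crucially, because $k(x)\to 0$ — with $\ve(x)<0$ for all large $x$. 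Evaluating this identity at $1/x$ and relabelling $c(1/x)$ as $c(x)$ (so that now $\lim_{x\to 0}c(x)=c$) yields exactly \eqref{eq::slow-rep2}, together with the smoothness and eventual negativity of $\ve$.

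Next I would extract the divergence of the integral from $k(x)\to 0$. Taking logarithms in the representation of $k$, $c(x)+\int_a^x \ve(t)/t\,\mathrm dt=\log k(x)\to-\infty$; since $c$ is bounded this forces $\int_a^x \ve(t)/t\,\mathrm dt\to-\infty$, i.e. $\int_a^\infty \ve(t)/t\,\mathrm dt=-\infty$ (the integrand is bounded near $a$, so the only source of divergence is at $+\infty$, which is what the displayed $\int_0^\infty\ve(t)/t\,\mathrm dt=-\infty$ records). Writing $\ve(t)=-|\ve(t)|$ past the point where $\ve$ turns negative, this reads $\int^\infty |\ve(t)|/t\,\mathrm dt=\infty$.

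Finally, the growth estimate \eqref{eq::epsilon-dom} I would prove by contradiction from this divergence: if $|\ve(t)|\,t^\beta$ stayed bounded for some $\beta>0$, say $|\ve(t)|\le M t^{-\beta}$ for $t\ge T$, then $\int_T^\infty |\ve(t)|/t\,\mathrm dt\le M\int_T^\infty t^{-1-\beta}\,\mathrm dt<\infty$, contradicting $\int^\infty |\ve(t)|/t\,\mathrm dt=\infty$; hence $|\ve(t)|\,t^\beta$ is unbounded for every $\beta>0$. The point that needs genuine care is that this argument by itself only yields $\limsup_{t\to\infty}|\ve(t)|\,t^\beta=\infty$; to obtain the limit in the stated form one exploits the non‑uniqueness of $\ve$ granted by Theorem \ref{thm::smooth} and fixes a sufficiently regular representative (adding to $c$ a slowly decaying integrable perturbation shifts $\ve$ by a term of order $(\log t)^{-1}$ without affecting $c(x)\to c$, which keeps $|\ve|$ from dipping below any fixed negative power). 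I expect this choice‑of‑representative step to be the only nontrivial part; everything else is a mechanical transcription of Karamata's theorem through the change of variables.
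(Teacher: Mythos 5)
Your proof runs along the same lines as the paper's (change of variables to reduce to Karamata's representation at infinity, then extract divergence of $\int \ve(t)/t\,\mathrm dt$ from $k(x)\to 0$). The genuine contribution of your write-up is the observation that divergence of the integral by itself only yields $\limsup_{t\to\infty}|\ve(t)|\,t^\beta=\infty$, not a full limit; the paper dismisses this step as ``obvious from the divergence of the integral,'' which passes over precisely this point, and the full limit (rather than the $\limsup$) is what is actually used later in the proof of Theorem \ref{thm::incu-main}. Your instinct to close the gap by exploiting the non-uniqueness of $\ve$ in Karamata's representation is the right one.

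However, the concrete perturbation you propose does not work. Shifting $\ve$ by a term of order $(\log t)^{-1}$ changes $\int_a^{1/x}\ve(t)/t\,\mathrm dt$ by $\int_a^{1/x}\mathrm dt/(t\log t)=\log\log(1/x)-\log\log a\to\infty$ as $x\to 0$, which cannot be absorbed into $c(x)$ while keeping $c(x)\to c$. The allowable shifts $\eta:=\tilde\ve-\ve$ are exactly those with $\int_a^\infty\eta(t)/t\,\mathrm dt$ convergent. The correct choice is $\eta(t)=-(\log t)^{-1-\epsilon}$ for some $\epsilon>0$, e.g.\ $\eta(t)=-(\log t)^{-2}$: since the original $\ve_0(t)\le 0$ for $t$ large, the representative $\ve(t):=\ve_0(t)-(\log t)^{-2}$ satisfies $|\ve(t)|\ge(\log t)^{-2}$, hence $|\ve(t)|\,t^\beta\ge t^\beta/(\log t)^2\to\infty$ for every $\beta>0$; the correction is $\mathrm dt/t$-integrable, so it is absorbed into $c(x)$ with $c(x)\to c$ intact, and $\ve(t)\to 0$ is preserved. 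With this substitution your argument is complete and in fact supplies the detail the paper's one-line proof omits.
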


\begin{proof}
The representation of $\ell(x)$ is obvious from the representation of $k(x)$ in \eqref{eq::slow-rep} by noting that since $c(x)\to c$ in the representation of $k(x)$, we can re-index this function in the representation of $\ell$. Further, when $\ell(0)=0$ then $\lim_{x\to \infty} k(x)=0$ is necessary so $\ve(t)$ is eventually negative and $\int_0^\infty \ve(t)/t \mathrm dt =-\infty$. Formula \eqref{eq::epsilon-dom} is obvious from the divergence of the integral.
\end{proof}

An elementary property of slowly varying functions is that they are sub-polynomial, i.e., for all $\al>0$, $\lim_{x\to \infty} k(x)x^\al=\infty$. They also decrease rather slowly: for all $\al>0,\, \lim_{x\to \infty} k(x)x^{-\al}=0$ . These translates to
the fact that all slowly varying functions at $0$ with $\ell(0)=0$ are steeper than any polynomial, but tend to $0$ when multiplied by one, i.e., for all $\al>0$,
\be\label{eq::steep} \lim_{x\to 0} \frac{\ell(x)}{x^{\al}}=\infty, \quad \lim_{x\to 0} \ell(x)x^{\al}=0. \ee

\begin{proof}[Proof of Theorem \ref{thm::incu-main}]
The explosion of both $(h_X, F_\sigma, 0, \infty)$ and $(h_X, F_I, 0, \infty)$ is necessary by Theorem \ref{thm::incu-matter}. We yet have to prove that it is sufficient.
First note that in the backward process, the distribution of the birth-times across the offspring is i.i.d. from the non-regular distribution function
\[ F_{I<\sigma}(t):=\int_0^t F_I(x) F_\sigma(\mathrm dx) \]
As a result, we can consider the backward process as an age-dependent BP with this new birth-time distribution (that has a positive probability to be infinite).
With the usual notation, we need to show that $(h_X, F_{I<\sigma}, 0, \infty)$ explodes as well.
For this, we shall use the fact that multiplying by a constant, or taking the power of a distribution function does not change the summability of \eqref{eq::explosive-crit-1}. We have shown this only for plump power-laws using the integrability in Lemma \ref{lemma::integral}, in the proof of  Theorem\ref{thm::max-birthtime}. However, it is not hard to see that these properties also remain valid when $X$ has heavier tails then power-laws, e.g. when $\Pv(X>x)= C/\lfloor \log x\rfloor$. For simplicity we use the integral characterisation valid for plump power-laws as in Lemma \ref{lemma::integral}.

We start by partial integration: 
\[ F_{I<\sigma}(t)= \int_0^t (F_\sigma(t)-F_\sigma(y)) F_I(\mathrm dy) \ge (F_\sigma(t)-F_\sigma(a t)) F_I(a t) \]
for any $a \in (0,1)$.
Suppose now that 
\be\label{eq::assump}\exists a \in (0,1), q \in [0, 1): \quad \limsup_{t\to 0} \frac{F_\sigma(a t)}{F_\sigma(t)} \le q <1. \ee
Then for a small enough $t_0>0$, for all $t \in (0, t_0]$  
\be\label{eq::Fisigma-lower}F_{I<\sigma}(t) \ge (1-q) F_\sigma(t) F_I(at). \ee
Using the same argument as in the proof of Theorems \ref{thm::max-birthtime} we obtain that the inverse function of $F^{(-1)}_{I<\sigma}(z)$ is also integrable in the sense of Lemma \ref{lemma::integral} when $F_\sigma^{(-1)}$ and $F_I^{(-1)}$ are both integrable.

1) When \eqref{eq::assump} holds then the previous argument proves the statement of the theorem.
Note that this covers the most important `borderline' cases such as $F_\sigma(x)=\exp\{- e^{C/x^\gamma}\}$ or $F_\sigma(x)=\exp\{- C/x^\gamma\}$ but also all polynomials, that is, when $F_\sigma$ is regularly varying at the origin:
\be\label{eq::poli} F_\sigma(t) =t^\beta \ell(t)\ee
holds for some arbitrary $\beta>0$  and slowly varying function $\ell$ in some neighborhood of the origin.

2) When \eqref{eq::assump} does not hold then
\be \label{eq::no-assump}\forall a \in (0,1):\quad \lim_{t\to 0} \frac{F_\sigma(a t)}{F_\sigma(t)}=1,\ee
where we replaced the $\limsup$ with a $\lim$ since $F_\sigma$ is monotonously increasing.
Further, note that \eqref{eq::no-assump} means precisely that $F_\sigma$ is \emph{slowly varying at $0$}, in other words, $G(x):=F_\sigma(1/x)$ is slowly varying at infinity. Plus, since $F_\sigma(0)=0$, $\lim_{x\to \infty}G(x)=0$ must hold as well. 
As a result, by \eqref{eq::steep}, we obtain that in this case $F_\sigma$ is \emph{steep} at the origin. Examples include $F_\sigma(x)=1/\log^\al (1/x)$ for some $\al>0$ or $\exp\{-\log^\al (1/x)\}$ for $0<\al<1$.

It is thus natural to prove the explosivity using Theorem \ref{thm::comp4}: If we can find an increasing function $H$ in an interval $[0, t_0]$ that satisfies 
$F_\sigma(\mathrm dt) \ge H(\mathrm dt)$ and $(h_X, H, F_I, \infty)^b$ is explosive, then so is $(h_X, F_\sigma, F_I, \infty)^b$.

So, let us consider the function
\be\label{eq::Hgamma} H_\gamma(x):=x^\gamma F_\sigma(x)\ee
in some small neighborhood of the origin for some $\gamma>0$. 
Then, using the representation in \eqref{eq::slow-rep2} with smooth $\ve(\cdot)$, 
\[  \ba H_\gamma(\mathrm d x)&= F_\sigma(x) \left( x^{\gamma-1} (\gamma- \ve(1/x)) \mathrm dx  +x^\gamma c(\mathrm dx) \right),\\
F_\sigma(\mathrm d x)&= F_\sigma(x) \left( - \ve(1/x)/x \,\mathrm dx  +c(\mathrm dx) \right)
.\ea \]
Note that for all $x\in(0,1)$, $x^\gamma c(\mathrm dx)< c(\mathrm dx)$ so the possible non-smooth part in $F_\sigma$ is decreased.
For the other term, we need $x^{\gamma} (\gamma+|\ve(1/x)|)< | \ve(1/x)|$ where recall from Corollary \ref{cor::slowly-0-prop} that $\ve$ is eventually nonpositive. Clearly  $x^{\gamma} \gamma< | \ve(1/x)|(1-x^\gamma)$ holds since $1-x^\gamma>1/2$ for $x$ small enough, and then  $\ve(1/x)/x^\gamma = \ve(z) z^{\gamma}$ tends to infinity with $z=1/x \to \infty$ by formula \eqref{eq::epsilon-dom}.   

It remains to show that $(h_X, H_\gamma, F_I, \infty)$ is still explosive. For this, first note that $H_\gamma$ satisfies the integrability criterion in Lemma \ref{lemma::integral}, since by \eqref{eq::steep}, for all small enough $x$ and any $\beta>0$, $H_\gamma(x)\ge x^{\gamma+\beta}$ and hence $H_\gamma^{(-1)}(z)\le z^{1/(\gamma+\beta)}$. This function is clearly integrable as in \eqref{eq::explosive-crit-3} so $(h_X, H_\gamma, 0, \infty)$ is explosive. If $X$ is a plump distribution but not a plump power-law then $X$ has heavier tails then a plump power-law and hence $H_\gamma$ certainly satisfies the summability criterion \eqref{eq::explosive-crit-1}.
To finish, we  return to case 1): clearly $H_\gamma$ satisfies \eqref{eq::assump} since $H_\gamma$ is regularly varying with index $\gamma$ at $0$, hence
\[ \forall a \in (0, 1):\quad \lim_{x\to 0}  \frac{H_\gamma(a x)}{H_\gamma(x)}=a^\gamma<1.\]
As a result, $(h_X, H_\gamma, F_I, 0)^b$ is explosive. This implies that $(h_X, F_\sigma, F_I, 0)^b$ is explosive too.
 \end{proof}
\begin{proof}[Proof of Theorem \ref{thm::incu-main-forw}]
Necessity follows from Theorem \ref{thm::incu-matter}. It is enough to show that the explosivity of both processes is sufficient. 
By the same argument as in the proof of Theorem \ref{thm::amidev-re}, $h_X(s)\le 1-(1-s)^{\delta}$ in some small neighborhood of $1$. Thus, let us define $Y$ as a random variable with generating function $h_Y(s)=1-(1-s)^\al$ with $\al=\delta$. By applying comparison Theorem \ref{thm::comp-tail} for the BP-s with incubation times and Corollary \ref{cor::power-law-change} for the two age-dependent BPs, it is enough to show that the forward process $(h_X, F_\sigma, F_I, \infty)$ explodes whenever both $(h_Y, F_\sigma, 0, \infty)$ and $(h_Y, F_I, 0, \infty)$ explode.

Next, we modify $F_\sigma$ to $H_\gamma$ from \eqref{eq::Hgamma} when necessary, so that $F_\sigma$ satisfies \eqref{eq::assump}. That is, when $F_\sigma$ varies slowly at $0$ then we rather consider $H_\gamma(t)=F_\sigma(t) t^\gamma$. In this case, since $F_\sigma(\mathrm dt) \ge H_\gamma (\mathrm dt)$ for all $t\in [0,t_0]$, by Theorem \ref{thm::comp4}, the explosivity of $(h_Y, H_\gamma, F_I, \infty)$ implies the explosivity of $(h_Y, F_\sigma, F_I, \infty)$. Clearly the process $(h_Y, H_\gamma, 0, \infty)$ is also explosive, since it satisfies the criterion \eqref{eq::explosive-crit-2}.

Thus, we reduced the problem for showing that $(h_Y, F_\sigma, F_I, \infty)$ is explosive whenever $(h_Y, F_\sigma, 0, \infty)$ and $(h_Y, F_\sigma, 0, \infty)$ explode \emph{and} the condition in \eqref{eq::assump} is met.

First, we modify the proof of Theorem \ref{thm::amidev-re} as follows: we develop a similar thinning as in Definition \ref{def::thinning}.  Indeed, for the forward process, our new thinning will look as follows: 

We fix a sequence $\wit t_n$ with $\sum_{n=1}^\infty \wit t_n=\wit T<\infty$ and we fix an $a\in (0,1)$ from \eqref{eq::assump}. We keep the (sub)tree of the root only if its incubation time $I_0< a \wit t_1$. We denote the kept vertices in generation $1$ by $\wit\eta_1$. Next, we keep the whole subtree of a vertex $x$ in $G_1$ if and only if both $I_x<a \wit t_2$ and $\sigma_x\in [a \wit t_1, \wit t_1]$. We denote the number of generation-$2$ individuals that are kept by $\wit\eta_2$, and so on. That is, we keep the whole subtree of a vertex $x$ in generation $n$ if and only if $I_x\le a\wit t_{n+1}$ and $\sigma_x \in [a \wit t_n, \wit t_n]$. We denote the vertices kept in generation $n$ by $\wit\eta_n$. Finally, we denote the subtree of kept individuals by $BP_{\wit\eta}$. 

The crutial idea of this thinning is that for all the vertices in $BP_{\wit\eta}$, the incubation time of the parent vertex is \emph{shorter} than the birth-time of the individual. Indeed, for a vertex $x$ in $\eta_n$, the parent $x_{|-1}$ of this vertex must be in $\eta_{n-1}$ and thus it must have $I_{x_{|-1}}<a \wit t_{n}$, while, since the vertex $x$ is in $\eta_n$, it must have $\sigma_x \in [a \wit t_n, \wit t_n]$. As a result, in the forward process of the epidemic with incubation times, all the vertices in $BP_{\wit\eta}$  will be infected.

The advantage of this thinning is that it leaves an i.i.d.\ thinning on the subtrees of vertices, just as before in the proof of Theorem \ref{thm::amidev-re}:
In this case, the retention probability of a subtree of a vertex in generation $n$ is, by the condition in \eqref{eq::assump}
\[ F_I(a \wit t_{n+1}) (F_\sigma(\wit t_n)- F_\sigma(a \wit t_n))\ge (1-q)F_I(a \wit t_{n+1}) F_\sigma(\wit t_n)  \ge (1-q)F_I(a \wit t_{n+1}) F_\sigma(\wit t_{n+1}):=\wit p_n.  \]

It is left to show that, for a proper choice of $\wit t_n$, with $\sum_{n=1}^\infty \wit t_n <\infty$,
 the thinned process $BP_{\wit\eta}$ is supercritical again in the sense that $\lim_{n\to \infty} \Pv(\wit\eta_n =0)<1$. This follows from the following two facts: 
 
1) Define the distribution function $\wit F(t):=(1-q) F_I(\delta t) F_\sigma(t)$. Then $\wit F(\wit t_{n+1})=\wit p_n$. This distribution function satisfies the integrability criterion of Lemma \ref{lemma::integral}, provided both $F_\sigma$ and $F_I$ do so as well. The proof of this claim is immediate from the proof of Theorem \ref{thm::max-birthtime}, i.e., the proof that the maxima of two distributions also explode.

2) Theorem \ref{thm::amidev-re} ensures that setting $t_{n}=\wit F^{(-1)}(\exp\{C/\beta^{n}\})$ for a sufficiently large $C>0$ and $\beta>\al$ yields a thinning of $(h_Y, \wit F, 0, \infty)$ with retention probability $\wit p_n= \exp\{C/\beta^{n}\}$ that produces a supercritical thinned BP. Finally, by possibly modifying $C$ when necessary, is not hard to see that thinning the BP $(h_Y, \wit F, 0, \infty)$ by shifted indices $\wit t_{n}:=t_{n+1}$ and $\wit p_n:=p_{n+1}$ also produces a supercritical thinned BP.  This finishes the proof. 
\end{proof}

\section{Some educational examples}\label{s::edu}
In this section we investigate some interesting birth-time distributions. We construct a class of birth-time distributions singular to the Lebesque measure that form an explosive BP with plump power-law offspring distributions. We also investigate the `borderline' birth-time distribution for explosivity. 

We start with a motivating example from singular distributions. The following construction is known from fractal theory. 
\begin{example}[Natural measure on the Cantor-set]\normalfont
Consider the distribution function of the natural probability measure on the Cantor set, that is obtained as follows: the $n$th approximation of the Cantor set consist of the union of $2^n$ many intervals, each of length $3^{-n}$:
\be\label{eq::cantor-n} \bigcup_{x_1,\dots, x_n: x_i\in \{0,2\}} \left[ \sum_{i=1}^n \frac{x_i}{3^i}, \frac{1}{3^n}+\sum_{i=1}^n \frac{x_i}{3^i}\right] \ee
Consider $F_n(x)$ as the distribution function of the uniform measure on this $n$th approximation i.e., where each interval has measure $2^{-n}$. It is not hard to show that the distribution function $\lim_{n\to \infty}F_n(x)= F_{\text{Cantor}}(x)$ converges pointwise and the limit function $F_{\text{Cantor}}(x)$ is a continuous, monoton function that only increases on the Cantor set, and is constant otherwise. Thus, $F_{\text{Cantor}}(x)$ is the distribution function singular to the Lebesque measure.
\end{example}
The next example is another singular distribution function, that is precisely the inverse of $F_{\text{Cantor}}(x)$: it is a discreet measure of the length of the complement of the Cantor set:
\begin{example}\normalfont Consider the dyadic expansion of $x\in(0,1)$, that is, $x=\sum_{i=1}^\infty x_i/2^i$, where $x_i\in\{0,1\}$. Let us define the following measure: $\mu_C(x):=1/3^n$ if and only if the last non-zero digit in the dyadic expansion of $x$ is at location $n$. Then, $\mu_C$ is a probability distribution singular to the Lebesque measure.
\end{example}
Indeed, there are $2^{n-1}$ many real numbers in $(0,1)$ that have measure $1/3^n$, and hence $\mu((0,1))=\sum_{n=1}^\infty 2^{n-1}/3^n=1$.
Here we show that both examples used as birth-time distributions yield explosive branching processes with any plump power-law offspring distribution.
\begin{claim} Let $X$ be a plump power-law distribution. Then $(h_X, F_{\text{Cantor}}, 0, \infty)$ and $(h_X, \mu_C, 0, \infty)$ are both explosive.
\end{claim}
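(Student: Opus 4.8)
The plan is to invoke the integral criterion of Lemma~\ref{lemma::integral}: since $X$ is a plump power-law, the age-dependent BP $(h_X,F_\sigma,0,\infty)$ explodes iff \eqref{eq::explosive-crit-2} holds for $F_\sigma$ and some $C>0$. The whole game is therefore to show that near the origin both $F_{\text{Cantor}}$ and $\mu_C$ are bounded below by a positive power of $t$, i.e.\ there are constants $c,\al>0$ with $F_\sigma(t)\ge c\,t^\al$ for all small $t>0$. Indeed, once this is established, $F^{(-1)}_\sigma(z)\le (z/c)^{1/\al}$ for small $z$, and plugging $z=\e^{-C/y}$ into \eqref{eq::explosive-crit-2} gives, after the substitution $u=1/y$,
\[
\int_0^\ve F^{(-1)}_\sigma\!\left(\e^{-C/y}\right)\frac{1}{y}\,\mathrm dy
\ \le\ c^{-1/\al}\int_0^\ve \e^{-C/(\al y)}\,\frac{1}{y}\,\mathrm dy
\ =\ c^{-1/\al}\int_{1/\ve}^{\infty}\e^{-Cu/\al}\,\frac{1}{u}\,\mathrm du\ <\ \infty,
\]
which is finite for every $C>0$ by exponential decay. (Equivalently, $F_\sigma(t)\ge c\,t^\al$ means $\sigma$ is stochastically dominated around the origin, in the sense of Definition~\ref{def::0-dom}, by a random variable $\gamma$ with $F_\gamma(t)=\min\{c\,t^\al,1\}$, which is regularly varying at $0$ and hence explosive; one could then conclude by Theorem~\ref{thm::comp1}. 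I will use the direct computation above since it is self-contained.)

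\textbf{The Cantor function.} From the construction \eqref{eq::cantor-n} one has $F_{\text{Cantor}}(3^{-n})=2^{-n}$ for all $n$. Since $F_{\text{Cantor}}$ is non-decreasing, for $t\in[3^{-n-1},3^{-n}]$ we get $F_{\text{Cantor}}(t)\ge F_{\text{Cantor}}(3^{-n-1})=2^{-n-1}$, while $t^{\al}\le (3^{-n})^{\al}=2^{-n}$ with $\al:=\log 2/\log 3\in(0,1)$. Hence $F_{\text{Cantor}}(t)\ge \tfrac12\,t^{\log_3 2}$ for all small $t$, and the estimate above applies.

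\textbf{The measure $\mu_C$.} Here the only computation is $F_{\mu_C}(2^{-m})$. The atoms of $\mu_C$ in $(0,2^{-m}]$ are the points $k/2^n$ with $k$ odd and $k/2^n\le 2^{-m}$; at level $n=m$ there is exactly one such atom ($k=1$), of mass $3^{-m}$, and at level $n>m$ there are $2^{n-m-1}$ of them, each of mass $3^{-n}$. Summing the geometric series,
\[
F_{\mu_C}(2^{-m})\ =\ 3^{-m}+\sum_{n=m+1}^\infty 2^{n-m-1}3^{-n}\ =\ 3^{-m}+3^{-m}\cdot\tfrac12\sum_{j\ge1}(2/3)^j\ =\ 2\cdot 3^{-m}.
\]
Writing $\al:=\log 3/\log 2>1$ we have $(2^{-m})^{\al}=3^{-m}$, so by monotonicity, for $t\in[2^{-m-1},2^{-m}]$, $F_{\mu_C}(t)\ge F_{\mu_C}(2^{-m-1})=2\cdot 3^{-m-1}\ge\tfrac23\,t^{\log_2 3}$; the general estimate then gives explosivity. (Note $\al>1$ is harmless: Lemma~\ref{lemma::integral} asks nothing about the size of the power.)

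\textbf{Main obstacle.} Everything reduces to routine estimates; the only slightly delicate point is the exact evaluation $F_{\mu_C}(2^{-m})=2\cdot 3^{-m}$ — one has to count atoms per dyadic level correctly and sum — after which the power-law lower bound, and hence explosivity via Lemma~\ref{lemma::integral}, is immediate. The Cantor case is entirely standard (Hölder behaviour of the devil's staircase).
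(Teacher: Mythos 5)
Your proof is correct and takes essentially the same approach as the paper: both verify the integral criterion of Lemma~\ref{lemma::integral} by establishing a power-type bound on the birth-time distribution near the origin (you bound $F_\sigma(t)\ge c\,t^\al$ and invert; the paper bounds $F^{(-1)}_\sigma(u)\le Cu^{1/\al}$ directly, which is the same thing). The only cosmetic differences are that for $\mu_C$ you do an explicit atom-counting computation of $F_{\mu_C}(2^{-m})=2\cdot3^{-m}$ rather than appealing to the duality with $F_{\text{Cantor}}$ (incidentally the paper's stated value $\mu_C([0,1/2^n])=3^{-n}$ omits the atom at $2^{-n}$, but the factor of two is irrelevant), and you observe in passing the alternative route via Definition~\ref{def::0-dom} and Theorem~\ref{thm::comp1}.
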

\begin{proof}
We start with $(h_X, F_{\text{Cantor}}, 0, \infty)$. It is easy to see that for any $n\ge 1$, $F_{n+k}(1/3^n)=1/2^n$ for any $k\ge 0$. Hence, $F_{\text{Cantor}}(1/3^n)=1/2^n$ holds as well. As a result, for $u<1/2$,
\[ F^{(-1)}_{\text{Cantor}}(u) \le 3 u^{\log 3 /\log 2}.\]
Then
\be\label{eq::cantor-int} \int_0^{1/2} F^{(-1)}_{\text{Cantor}}(\e^{-1/y}) \frac1y \mathrm dy \le \int_0^{1/2} 3 \e^{-\frac{\log 3}{\log 2  y}} \frac1y \mathrm dy=
3\int_2^\infty \e^{-\frac{\log 3}{\log 2} z} \frac1z \mathrm dz <\infty.\ee
Lemma \ref{lemma::integral} finishes the proof. For the $\mu_C$, by e.g.\ using that it is the inverse of $F_{\text{Cantor}}(x)$, it is not hard to see that $\mu_C([0,1/2^n])=1/3^n$. The same calculation as in \eqref{eq::cantor-int} with $\log 2/\log 3$  in the exponent yields that this process is also explosive. \end{proof}
These examples motivated the following construction, that is an `almost' discreet distribution in the sense that its single accumulation point is $0$.
\begin{example}\label{ex::1}\normalfont Let the measure $\nu_\beta$, $\beta>1$ assign mass to the non-positive powers of $\e$: for $n\ge 1$, let $\nu_\beta(\e^{-n}):=\exp\{-\exp\{ \beta^n\}\},$ and let $\nu_\beta(1):=1-\sum_{n=1}^\infty \nu_\beta(\e^{-n}).$ Let $F_\beta$ denote the distribution function of the measure obtained. 
\end{example}
\begin{claim}
Let $X$ be a plump power-law distribution. Then $(h_X, F_\beta, 0, \infty)$ is explosive for $\beta<\e$ and conservative for $\beta\ge \e$.
\end{claim}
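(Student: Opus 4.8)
The plan is to invoke the integral criterion of Lemma~\ref{lemma::integral}: since $X$ is a plump power-law, the process $(h_X, F_\beta, 0, \infty)$ is explosive if and only if, for some constant $C>0$ and some small $\varepsilon>0$,
\[
\int_{1/\varepsilon}^{\infty} F_\beta^{(-1)}\!\big(\e^{-Cu}\big)\,\frac{\mathrm du}{u}<\infty ,
\]
cf.~\eqref{eq::explosive-crit-3}. So everything reduces to identifying the generalised inverse $F_\beta^{(-1)}$ and deciding when this integral converges.

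First I would record the values of $F_\beta$ on the atoms. By definition $F_\beta(\e^{-n})=\sum_{k\ge n}\exp\{-\exp\{\beta^k\}\}$, and because consecutive summands shrink super-exponentially the series is dominated by its first term: there is an $N_0$ such that $\exp\{-\exp\{\beta^n\}\}\le F_\beta(\e^{-n})\le 2\exp\{-\exp\{\beta^n\}\}$ for all $n\ge N_0$, hence $-\log F_\beta(\e^{-n})=\exp\{\beta^n\}+O(1)$. Moreover $F_\beta$ is a pure step function: it is constant on $(\e^{-(n+1)},\e^{-n})$ with value $F_\beta(\e^{-(n+1)})$ and jumps to $F_\beta(\e^{-n})$ at $\e^{-n}$. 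Therefore $F_\beta^{(-1)}(y)=\e^{-n}$ exactly when $F_\beta(\e^{-(n+1)})<y\le F_\beta(\e^{-n})$; equivalently $F_\beta^{(-1)}(\e^{-Cu})=\e^{-n}$ exactly when $u$ lies in the block
\[
I_n:=\Big[\tfrac1C\big({-}\log F_\beta(\e^{-n})\big),\ \tfrac1C\big({-}\log F_\beta(\e^{-(n+1)})\big)\Big).
\]

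The contribution of the block $I_n$ to the integral is then
\[
\e^{-n}\int_{I_n}\frac{\mathrm du}{u}
=\e^{-n}\,\log\frac{-\log F_\beta(\e^{-(n+1)})}{-\log F_\beta(\e^{-n})}
=\e^{-n}\big(\beta^{n+1}-\beta^{n}+o(1)\big),
\]
using the estimate for $-\log F_\beta(\e^{-m})$ above. Summing over $n\ge N_0$ (the finitely many remaining blocks add only a finite amount, and the choice of $\varepsilon$ is irrelevant since convergence of the integral is a tail property), the integral converges if and only if
\[
\sum_{n}(\beta-1)\,(\beta/\e)^{n}<\infty ,
\]
that is, if and only if $\beta/\e<1$. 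By Lemma~\ref{lemma::integral} this shows that $(h_X,F_\beta,0,\infty)$ is explosive for $1<\beta<\e$, and since the geometric series diverges for every $\beta\ge\e$, the process is conservative for $\beta\ge\e$.

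The only genuinely delicate point is making the $O(1)$ and $o(1)$ errors in the middle display rigorous while respecting the block‑constant structure of $F_\beta^{(-1)}$; but this is exactly the sum‑versus‑integral sandwiching already performed in the proof of Lemma~\ref{lemma::integral} (compare \eqref{eq::bound11}--\eqref{eq::bound12}), with the geometric ladder $\alpha^{n}$ there replaced by the ladder $-\log F_\beta(\e^{-n})\asymp\exp\{\beta^{n}\}$. No essentially new idea is needed beyond that bookkeeping.
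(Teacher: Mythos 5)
Your proof is correct and rests on exactly the same two ingredients as the paper's: the integral criterion of Lemma~\ref{lemma::integral} and the two-sided estimate $F_\beta(\e^{-n})\asymp\exp\{-\exp\{\beta^n\}\}$. The paper simply packages the bookkeeping differently, deriving the continuous envelope $F_\beta^{(-1)}(u)\le C(\log\log(1/u))^{-1/\log\beta}$ and evaluating $\int_\e^\infty(\log u)^{-1/\log\beta}\,\mathrm du/u$, whereas you sum the block contributions directly into a geometric series $\sum_n(\beta-1)(\beta/\e)^n$; the two computations are equivalent, and in fact your block decomposition has the small advantage of handing you both the convergence ($\beta<\e$) and divergence ($\beta\ge\e$) directions in one pass, which the paper's one-sided bound on $F_\beta^{(-1)}$ only supplies implicitly.
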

\begin{proof}
Note that for some constant $C>0$, for any $k\ge 1$, \[ \exp\{-\exp\{\beta^k\}\}<\sum_{n=k}^\infty \exp\{-\exp\{\beta^n\}\}\le C\exp\{-\exp\{\beta^k\}\}.\] Hence, $F_{\beta}(1/\e^n)\ge \exp\{ -\exp\{\beta^n\}\}$ and 
\[ F^{(-1)}_\beta(u) \le C (\log \log (1/u))^{-1/\log \beta} \]
As a result, 
\[ \int_{\e}^{\infty} F^{(-1)}_\beta(\e^{-u}) \frac1u \mathrm du=\int_{\e}^\infty (\log u)^{-1/\log \beta} \frac1u \mathrm du.\]
The latter integral converges if $1/\log \beta>1$ (that is, $\beta<\e$) and diverges if $1/\log \beta\le 1$, that is, if $\beta\ge \e$. Lemma \ref{lemma::integral} finishes the proof.\end{proof}
The next example is the continuous version of the previous example, with $\gamma=\log \beta.$ The proof is analogous and left to the reader. This example is important since for plump power-law distributed offsprings it is the \emph{boundary case} between explosive and conservative BPs. (Of course logarithmic corrections could be added.) 
\begin{example}[Continuous version of Example \ref{ex::1}]\label{ex::boundary}
Let $F_\gamma(y):=\exp\{ - \exp\{ 1/y^\gamma\}\}$. Then, for a plump power-law offspring distribution $X$, $(h_X, F_\gamma, 0, \infty)$ is explosive for $\gamma<1$ and conservative for $\gamma\ge 1$. 
\end{example}

We finish the paper by giving an example of a distribution that is absolutely continuous, but its density function is non-monotonous in any small neighborhood of the origin. This example shows an example for a birth-time distribution that does not satisfy the conditions in an older version of Theorem \ref{thm::incu-main-forw} in the Master thesis of Gulikers \cite{Gul14}. For a discussion about these conditions, see Section \ref{s::incu}.
\begin{example}[A counterexample]\label{ex::counter} \normalfont
Consider the following absolutely continuous measure: modify the singular distribution described in Example \ref{ex::1} for some $\beta<\e$, so that the mass $\exp\{-\exp\{ \beta^n\}\}$ is distributed uniformly over the interval $(\e^{-n}/2, 3\e^{-n}/2)$. Call the density of this part $f_\beta(t)$. Further, add  another  absolutely continuous part 
as in Example \ref{ex::boundary}, now for $\gamma>1$, with support on $[0,1]$, let us call the density of this part $f_\gamma(t)$. Then $f_\beta(t)+f_\gamma(t)$ describes the density function for all $t<1$. Add the remaining mass arbitrarily in an absolutely continuous way over some interval $(1, b]$ for some appropriate $b>1$. Call the resulting distribution function $F_\omega$.
\end{example}
This distribution function $F_\omega$ is absolutely continuous and has full support on $[0, b]$, plus, it forms an explosive BP with any plump-power law distribution, since already the first part (the modified version of Example \ref{ex::1}) is explosive. However, $F_\omega$ is \emph{not} monotone in any small neighborhood of the origin: for all small enough $t$, $f_\gamma(t)\ll f_\beta(t)$ and hence, for arbitrarily small $t_0$, the density $f_\gamma(t)+f_\beta(t)$ is non-monotonous on $[0, t_0]$.

\section*{Acknowledgements}
The work of JK was partly supported by the research programme Veni (project number 639.031.447), which is (partly) financed by the Netherlands Organisation for Scientific Research (NWO). Some results in this paper are generalisations of the results in the Master thesis of Lennart Gulikers, \cite{Gul14}, supervised by Remco van der Hofstad and the author. We refer the interested reader there for alternative proofs.  

\bibliographystyle{abbrv}
\bibliography{refsexplosion}

\end{document}